\def\blfootnote{\xdef\@thefnmark{}\@footnotetext}
\newcommand\ccnote{
    \blfootnote{Jie Zhou is supported by  NSFC 11721101 and NSFC 12531001.}
}
\numberwithin{equation}{section}
\renewcommand{\le}{\leqslant}
\renewcommand{\leq}{\leqslant}
\renewcommand{\ge}{\geqslant}
\renewcommand{\mathbb}{\varmathbb}
\newtheorem{theorem}{Theorem}[section]
\newtheorem{lemma}[theorem]{Lemma}
\newtheorem{corollary}[theorem]{Corollary}
\newtheorem{proposition}[theorem]{Proposition}
\newtheorem{definition}[theorem]{Definition}
\newtheorem{remark}[theorem]{Remark}
\newtheorem{claim}[theorem]{Claim}
\def\begfig {
\begin{figure}
\small }
\def\endfig {
\normalsize
\end{figure}
}
\address{
\newline
Yuchen Bi\\Mathematical Institute\\ Department of Pure Mathematics\\ University of Freiburg\\ Ernst-Zermelo-Stra{\ss}e 1 Freiburg im Breisgau \\ D-79104\\ Germany
\newline
{\tt yuchen.bi@math.uni-freiburg.de}
\newline
 Jie Zhou,
School of Mathematical Sciences, Capital Normal University,
Beijing 100048, P.R. China.
\newline
{\tt Email:zhoujiemath@cnu.edu.cn}
}
\begin{document}

\thispagestyle{empty}

\ccnote

\vspace{1cm}


\begin{center}
\begin{huge}
\textit{Linear Quantitative Rigidity for Almost--CMC Surfaces}


\end{huge}
\end{center}

\vspace{1cm}


\begin{center}
\begin{minipage}[t]{.28\textwidth}
\begin{center}
{\large{\bf{Yuchen Bi,  Jie Zhou}}} \\
\vskip0.15cm
\end{center}
\end{minipage}
\end{center}

\vspace{1cm}

\vspace{1cm}
\vspace{1cm}
\noindent \textbf{Abstract.} \textit{We prove a quantitative rigidity result for almost constant mean
curvature spheres in $\mathbb{R}^3$.
Under a sub--two--sphere Willmore bound and a small $L^2$--CMC defect, we show
that an almost--CMC surface is close to the round sphere, with linear control
of the $W^{2,2}$--distance of the parametrization and the $L^\infty$--norm of
the conformal factor. An analogous statement holds under an a priori area bound below that
of two spheres.The proof relies on a linearized analysis around the sphere.  A previously established qualitative
rigidity result provides the initial closeness required to enter the
perturbative regime.  The estimate further extends to integral $2$--varifolds
of unit density using known regularity and density results.
}
\vskip0.3cm

\noindent \textbf{Keywords.} CMC surfaces, Quantitative rigidity
\vspace{0.5cm}

\section{Introduction}

A classical theorem of Alexandrov asserts that an embedded closed hypersurface $\Sigma=\partial\Omega$
in Euclidean space $\mathbb{R}^{n+1}$  with constant mean curvature  must be a round sphere.
In recent years, a broad literature has developed  qualitative or quantitative 
rigidity results for Alexandrov-type characterizations of spheres under
various regularity assumptions and smallness of natural CMC deficits, with
conclusions ranging from closeness in weak geometric distances to stronger
graphical descriptions under additional non--degeneracy assumptions that
preclude bubbling.  We refer to \cite{CiMa,KM,CiVe,DeMaMiNe,FiZhang} and the
references therein for related results in this direction.

In this paper we study \emph{almost--CMC} surfaces in $\mathbb{R}^3$ and prove a
quantitative rigidity estimate with a \emph{linear rate} in the critical
$L^2$--defect $\|H-\overline H\|_{L^2}$.  From an analytic viewpoint a
natural way to measure deviation from the CMC condition is through
scale--invariant norms of $H-\overline H$; in dimension $n$ this leads to the
critical quantity $\|H-\overline H\|_{L^n}$.

In the two--dimensional setting in $\mathbb{R}^3$, a natural topology for such
critical problems is suggested by the conformal parametrization framework of
M\"uller--\v Sver\'ak\cite{MS95} and De~Lellis--M\"uller~\cite{dLMu,dLMu2}: one measures the
deviation from the round sphere through a conformal map and controls it in
$W^{2,2}$, together with an $L^\infty$ bound for the conformal factor.  Within
this framework, De~Lellis--M\"uller quantify nearness to a round sphere by the
$L^2$--smallness of the trace--free second fundamental form, whereas we quantify nearness to the CMC condition by
the oscillation of the scalar mean curvature $\|H-\overline H\|_{L^2}$.  Since small
oscillation of $H$ alone does not exclude bubbling, and bubbling is not
naturally captured by the conformal topology considered here, we impose in
addition an a priori two--sphere--threshold $L^2$--bound on $H$, namely
$\int_\Sigma |H|^2\,d\mu_g<32\pi$.

Quantitative results under scale--invariant assumptions in more general
settings, including higher dimensions and configurations with multiple
spherical components, are naturally formulated in topologies that are robust
under bubbling phenomena; see, for instance, the quantitative Alexandrov
theorem of Julin--Niinikoski~\cite{JulinNiinikoski23}.

As a starting point we use a qualitative stability result from~\cite{BiZhou22},
which shows that small $L^2$ CMC defect (together with the two--sphere--threshold
curvature bound) forces the surface into a perturbative $W^{2,2}$--conformal
neighborhood of the round sphere.  The main purpose of the present paper is to
upgrade the resulting modulus of continuity $\Psi(\delta)$ to a linear
bound.

\begin{theorem}[Qualitative rigidity for almost--CMC spheres {\cite[Theorem 1.2]{BiZhou22}}]
\label{thm:BZ22-qualitative}
For any $\alpha\in(0,\tfrac12)$ there exists $\delta_0=\delta_0(\alpha)>0$
with the following property.

Let $F:\Sigma\to\mathbb{R}^3$ be a smooth immersion of a closed connected surface,
let $g:=dF\otimes dF$ and $d\mu_g$ be the induced area measure, and let
$\vec N$ be a globally defined unit normal, with
scalar mean curvature $H:=\langle \vec H,\vec N\rangle$ and average
\[
\overline H := \fint_{\Sigma} H\,d\mu_g
       := \frac{1}{\mu_g(\Sigma)}\int_{\Sigma}H\,d\mu_g .
\]
Assume
\begin{equation}\label{eq:BZ22-assumptions}
\int_{\Sigma}|H-\overline H|^2\,d\mu_g \le \delta\le \delta_0,
\qquad
\int_{\Sigma}|H|^2\,d\mu_g \le 32\pi(1-\alpha).
\end{equation}
Then $\Sigma$ is homeomorphic to $\mathbb{S}^2$.

Moreover, after rescaling so that $\mu_g(\Sigma)=4\pi$,  and after composing $F$
with a suitable translation of $\mathbb{R}^3$, there exists a conformal
parametrization $ f:\mathbb{S}^2\to F(\Sigma)$ with
\[
d f\otimes d f= e^{2u}g_{\mathbb{S}^2},
\]
such that,
\begin{equation}\label{eq:BZ22-convergence}
\| f-\mathrm{id}_{\mathbb{S}^2}\|_{W^{2,2}(\mathbb{S}^2)}
 + \|u\|_{L^\infty(\mathbb{S}^2)}
 \le \Psi(\delta),
\end{equation}
where $\Psi(\delta)\to 0$ as $\delta\downarrow 0$.
\end{theorem}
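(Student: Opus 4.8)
\medskip
\noindent\textbf{Proof proposal.} The plan is to run one compactness--contradiction argument delivering both the topological conclusion and the $W^{2,2}$--closeness. I would first isolate the scale--invariant statement to be proved: after rescaling so that $\mu_g(\Sigma)=4\pi$, the hypotheses \eqref{eq:BZ22-assumptions} force $\chi(\Sigma)=2$ together with $\int_\Sigma|\mathring A|^2\,d\mu_g\le\Psi(\delta)$, where $\mathring A=A-\tfrac12Hg$ is the trace--free second fundamental form and $\Psi(\delta)\to0$ as $\delta\downarrow0$. Granting this, the remaining assertions are exactly the nearly--umbilical rigidity theorem of De~Lellis--M\"uller \cite{dLMu,dLMu2}: a closed connected surface of area $4\pi$ in $\mathbb R^3$ with $\int|\mathring A|^2$ small is a topological sphere and, after a translation, carries a conformal parametrization $f:\mathbb S^2\to\Sigma$, $df\otimes df=e^{2u}g_{\mathbb S^2}$, with $\|f-\mathrm{id}_{\mathbb S^2}\|_{W^{2,2}}+\|u\|_{L^\infty}\le C\|\mathring A\|_{L^2(\Sigma)}$; the normalization $\mu_g(\Sigma)=4\pi$ pins the limiting radius to $1$, and the translation is chosen so that the limiting centre is the origin. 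Throughout I use $|\mathring A|^2=\tfrac12H^2-2K$, hence, by Gauss--Bonnet, $\int_\Sigma|\mathring A|^2\,d\mu_g=\tfrac12\int_\Sigma H^2\,d\mu_g-4\pi\chi(\Sigma)$, together with the Willmore inequality $\int_\Sigma H^2\,d\mu_g\ge16\pi$ and the Li--Yau bound $\theta\le\tfrac1{16\pi}\int_\Sigma H^2\,d\mu_g$ for the multiplicity.

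To prove the reduced statement I argue by contradiction. Assume there exist $\alpha$ and smooth immersions $F_k:\Sigma_k\to\mathbb R^3$ satisfying \eqref{eq:BZ22-assumptions} with $\delta_k\downarrow0$, normalized to $\mu_{g_k}(\Sigma_k)=4\pi$, for which each $k$ has $\chi(\Sigma_k)\ne2$ or $\int_{\Sigma_k}|\mathring A_k|^2\ge\varepsilon_0>0$. Since $\tfrac14\int H_k^2<8\pi$, Li--Yau makes every $F_k$ an embedding, $\Sigma_k=\partial\Omega_k$, with multiplicities $<2$. From $\int H_k^2=\int|H_k-\overline H_k|^2+4\pi\overline H_k^2$ and $\int|H_k-\overline H_k|^2\le\delta_k$ we get $\overline H_k^2=\tfrac1{4\pi}\int H_k^2+O(\delta_k)$ confined to a fixed compact subset of $(0,\infty)$ once orientations are fixed, so $\overline H_k\to\overline H_\infty\ne0$ along a subsequence. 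The uniform area and Willmore bounds plus Simon's monotonicity formula yield a uniform diameter bound; hence, along a further subsequence, $\mathbf 1_{\Omega_k}\to\mathbf 1_{\Omega_\infty}$ in $L^1$ with $\mathrm{Per}(\Omega_\infty)\le4\pi$, and $\Sigma_k\to V$ as integral $2$--varifolds. The first variation of $\mathrm{Per}-\overline H_k\,\mathrm{Vol}$ at $\Omega_k$ is controlled by $\|H_k-\overline H_k\|_{L^2}\to0$, so $\Omega_\infty$ is a critical point of $\mathrm{Per}-\overline H_\infty\,\mathrm{Vol}$ among finite--perimeter sets; by the Alexandrov theorem in that class (Delgadino--Maggi; see also the bubbling analysis of \cite{DeMaMiNe}) $\Omega_\infty$ is a finite union of $N$ disjoint balls of radius $2/|\overline H_\infty|$, so $V=\partial\Omega_\infty$ is the corresponding union of round spheres.

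The sub--two--sphere threshold now forces $N=1$ and excludes bubbling. Each sphere component of $V$ carries exactly $16\pi$ of Willmore energy, and any nontrivial concentration point (Willmore bubble) contributes a further fixed positive quantum; lower semicontinuity gives $16\pi N\le\liminf\int_{\Sigma_k}H_k^2\le32\pi(1-\alpha)<32\pi$, hence $N=1$, and the same budget leaves no room for a concentrated bubble. Consequently the convergence $\Sigma_k\to V$ is, on all of $\mathbb R^3$, smooth with multiplicity one (using $\theta<2$ and the absence of energy concentration), $\int_{\Sigma_k}H_k^2\to\int_VH^2=16\pi$, no mass is lost so $\mathrm{Per}(\Omega_\infty)=4\pi$ and $V$ is the unit sphere, and for large $k$, $\Sigma_k$ is a small graph over $V$, so $\chi(\Sigma_k)=\chi(\mathbb S^2)=2$. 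The integrated Gauss--Bonnet identity then gives $\int_{\Sigma_k}|\mathring A_k|^2=\tfrac12\int_{\Sigma_k}H_k^2-8\pi\to0$, contradicting both alternatives. This proves the reduced statement, and hence the theorem.

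The step I expect to be the main obstacle is this limit analysis --- in particular, upgrading varifold convergence to locally smooth graphical convergence and thereby ruling out neck-- or handle--type degenerations of the topology (a thin tube has large mean curvature and so is excluded by the $L^2$--CMC defect, while a genuinely size--$1$ handle would survive into the limit as an embedded closed CMC surface of positive genus and contradict the Alexandrov--type classification, but pinning this down uniformly is delicate) --- and it is precisely here that $\int_\Sigma H^2<32\pi$ is essential, since it leaves room for only a single sphere's worth of Willmore energy and thereby kills both a second limit ball and any concentrated bubble. A convenient alternative organization is to work throughout with the finite--perimeter structure and invoke the quantitative bubbling theorem of Delgadino--Maggi--Mihaila--Neumayer \cite{DeMaMiNe}, which already yields $L^1$--closeness of $\Omega$ to a single ball with a rate, and then combine it with De~Lellis--M\"uller's estimates to reach the stated conformal form. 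Either way, the contradiction scheme produces only an abstract modulus $\Psi$, which is all that \cite{BiZhou22} claims; recovering the \emph{linear} rate is the task taken up in the present paper by a direct linearized analysis around $\mathbb S^2$.
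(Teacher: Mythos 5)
The paper does not prove this theorem itself; it is imported verbatim from \cite{BiZhou22} as a black box and used as the starting point for the linearized analysis, so there is no in-paper proof to compare against. Assessing your proposal on its own merits, it is a reasonable reconstruction of the kind of argument one expects: reduce to an $L^2$-smallness of $\mathring A$ via Gauss--Bonnet, and then invoke De Lellis--M\"uller; establish the reduction by compactness--contradiction, using Li--Yau for embeddedness, varifold/finite-perimeter compactness, an Alexandrov theorem for finite-perimeter constant-mean-curvature sets in the limit, and the sub-$32\pi$ Willmore budget to force $N=1$ and no concentration. This overall scheme is sound, and you correctly identify where the real work lies.

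Two cautionary remarks. First, the claim that "any nontrivial concentration point contributes a further fixed positive quantum" glosses over the fact that the natural quantity governing local graphicality and concentration is $\int|A|^2$, not $\int H^2$; a straight Willmore-energy budget does not by itself exclude a catenoid- or long-thin-neck-type degeneration, and at this step one genuinely needs both the sub-two-sphere Willmore bound \emph{and} the almost-CMC constraint (which kills high-curvature necks) fed through the Kuwert--Sch\"atzle/Rivi\`ere-type concentration--compactness and graphical decomposition machinery before one can conclude locally smooth multiplicity-one convergence and the topological identification $\chi(\Sigma_k)=2$. Second, the passage "no mass is lost so $\mathrm{Per}(\Omega_\infty)=4\pi$" is logically downstream of having already excluded concentration, so the order of implications should be tightened: first establish unit density and absence of energy concentration, then deduce local graphical convergence, then conclude area convergence and $\int|\mathring A_k|^2\to 0$. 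With those caveats, the proposal is a correct high-level sketch rather than a complete proof, and it plausibly reflects the strategy of \cite{BiZhou22}.
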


We emphasize that our curvature assumption
\begin{equation}\label{eq:Willmore-below-two-spheres}
\int_{\Sigma} |H|^2\,d\mu_g \le 32\pi(1-\alpha)
\end{equation}
is primarily a \emph{single--bubble} condition, in the sense that it rules out
the formation of two well--separated spherical components.  We will refer to
\eqref{eq:Willmore-below-two-spheres} as a \emph{two--sphere--threshold} (or
\emph{sub--two--spheres}) Willmore bound.

Although Theorem~\ref{thm:BZ22-qualitative} is stated for smooth immersions, this assumption
has further geometric consequences.  By the classical result of Li--Yau\cite{LY}, any closed immersed surface in $\mathbb{R}^3$
with Willmore energy strictly below $8\pi$ is necessarily embedded.
In particular, under the curvature assumption
\eqref{eq:Willmore-below-two-spheres}, the immersion $F$ in
Theorem~\ref{thm:BZ22-qualitative} is an embedding.

Hence, without loss of generality, we may regard $F$ as the standard embedding
\[
\iota:\Sigma \hookrightarrow \mathbb{R}^3,
\]
and, by a mild abuse of notation, identify $\Sigma$ with its image
$\iota(\Sigma)\subset\mathbb{R}^3$. Since $\Sigma$ is a closed embedded surface,
there exists a precompact domain $\Omega\subset\mathbb{R}^3$ such that
$
\Sigma = \partial \Omega .$

In our quantitative analysis it is more convenient to normalize by the enclosed
volume $|\Omega|=\tfrac{4\pi}{3}$ rather than by the area
$\mu_g(\Sigma)=4\pi$.  This choice is immaterial, since in the perturbative
regime the two normalizations are equivalent up to a fixed scaling.

\medskip

We now state our main result.  As discussed above, it provides a
\emph{linear upgrade} of the qualitative rigidity estimate for almost--CMC
spheres, yielding quantitative control in the critical $L^2$--CMC defect.

\begin{theorem}[Linear quantitative rigidity for almost--CMC spheres]
\label{thm:main}
For any $\alpha\in(0,\tfrac12)$ there exist $\delta_0=\delta_0(\alpha)>0$ and
$C=C(\alpha)<\infty$ with the following property.

Let $\iota:\Sigma\to\mathbb{R}^3$ be a smooth immersion of a closed connected surface
satisfying \eqref{eq:BZ22-assumptions}.  After rescaling so that
$|\Omega|=\tfrac{4\pi}{3}$, up to a translation of $\mathbb{R}^3$,  there exists a conformal parametrization
$f:\mathbb{S}^2\to \Sigma$ with
\[
df\otimes df = e^{2u}g_{\mathbb{S}^2},
\]
such that
\begin{equation}\label{eq:linear-main-estimate}
\|h\|_{W^{2,2}(\mathbb{S}^2)} + \|u\|_{L^\infty(\mathbb{S}^2)}
\;\le\; C\,\|H-\overline H\|_{L^2(\Sigma)} ,
\end{equation}
where $h:=f-\mathrm{id}_{\mathbb{S}^2}$.
\end{theorem}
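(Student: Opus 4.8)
The plan is to run a linearized analysis around the round sphere, using Theorem~\ref{thm:BZ22-qualitative} only to land inside the perturbative regime and then bootstrap to a linear rate. First I would fix the normalization $|\Omega|=\tfrac{4\pi}{3}$, and invoke Theorem~\ref{thm:BZ22-qualitative} to produce a conformal parametrization $f=\mathrm{id}_{\mathbb{S}^2}+h$ with $\|h\|_{W^{2,2}}+\|u\|_{L^\infty}\le\Psi(\delta)$. Set $\delta:=\|H-\overline H\|_{L^2(\Sigma)}^2$; the goal is to show $\|h\|_{W^{2,2}}+\|u\|_{L^\infty}\le C\sqrt\delta$. Next I would decompose $h$ into its spherical-harmonic modes: the constant ($\ell=0$) and linear ($\ell=1$) modes are pure gauge — they correspond to dilations and translations of $\mathbb{R}^3$ — and are killed (or made quadratically small) by the volume normalization and the centering translation, so the essential content lives in the modes $\ell\ge2$, which I write as $h^\perp$. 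The strategy is then a Fredholm/coercivity argument: write the CMC operator (scalar mean curvature minus its average) as $\mathcal{L}h^\perp + Q(h^\perp)$ where $\mathcal{L}$ is its linearization at the sphere and $Q$ collects the higher-order terms, use the explicit spectrum of $\mathcal{L}$ (the Jacobi-type operator $\tfrac12(\Delta_{\mathbb{S}^2}+2)$ acting on the normal perturbation, whose kernel is exactly the $\ell=0,1$ modes) to get $\|\mathcal{L}h^\perp\|_{L^2}\gtrsim\|h^\perp\|_{W^{2,2}}$ on the orthogonal complement, and absorb $Q(h^\perp)$ using $\|Q(h^\perp)\|_{L^2}\lesssim\Psi(\delta)\|h^\perp\|_{W^{2,2}}$ together with the smallness from the qualitative step.

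In more detail, the key steps in order are: (i) express the induced metric, conformal factor $u$, and scalar mean curvature $H$ of $\Sigma$ in terms of $h$ via the conformal parametrization, Taylor-expanding to first order around $h=0$; in particular derive an elliptic equation relating $u$ to $h^\perp$ (roughly $u \approx $ a first-order differential expression in $h^\perp$) so that $\|u\|_{L^\infty}$ and $\|u\|_{W^{2,2}}$ are controlled by $\|h^\perp\|_{W^{2,2}}$ plus quadratic errors. (ii) Derive the identity $H-\overline H = \mathcal{L}(h^\perp\cdot\nu) + (\text{quadratic in }h,u)$, where $\nu$ is the radial direction; the linear operator here is the standard CMC/Jacobi operator on $\mathbb{S}^2$. (iii) Use the gauge fixing — dilation fixes $\int h^\perp=0$-type condition via volume, translation fixes the $\ell=1$ part of the position — so that on the remaining modes the spectral gap gives $\|h^\perp\|_{W^{2,2}}\le C\|\mathcal{L}(h^\perp\cdot\nu)\|_{L^2}$. (iv) Combine: $\|h^\perp\|_{W^{2,2}}\le C\|H-\overline H\|_{L^2} + C\Psi(\delta)\|h^\perp\|_{W^{2,2}}$, and since $\Psi(\delta)\to0$ we absorb the last term for $\delta\le\delta_0$ small, obtaining $\|h^\perp\|_{W^{2,2}}\le 2C\|H-\overline H\|_{L^2}=2C\sqrt\delta$. (v) Feed this back into the equations from step (i) to get $\|u\|_{L^\infty}\le C\sqrt\delta$ and to control the gauge modes $\|h-h^\perp\|_{W^{2,2}}\le C\delta$, completing \eqref{eq:linear-main-estimate}.

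The main obstacle I expect is step (ii)–(iii): making the linearization rigorous in the low-regularity conformal framework. The perturbation $h$ is only $W^{2,2}$, and $W^{2,2}(\mathbb{S}^2)$ just barely fails to embed in $L^\infty$ in a quantitatively useful way without the conformal-factor bound, so one must be careful that all the "quadratic" error terms $Q(h)$ really are bounded by $\Psi(\delta)\|h\|_{W^{2,2}}$ and not merely by $\|h\|_{W^{2,2}}^2$ (which would also suffice) — products of two $W^{2,2}$ functions, or a $W^{2,2}$ function times a $W^{1,2}$ function, need the De~Lellis--M\"uller-type estimates and the $L^\infty$ bound on $u$. A related subtlety is that the conformal parametrization is not unique (the M\"obius group acts), and one must pin down $f$ — typically by requiring $h^\perp$ to be $L^2(\mathbb{S}^2)$-orthogonal to the kernel of $\mathcal{L}$, i.e. to the coordinate functions — and check this normalization is compatible with the translation/dilation already used and is achievable within $\Psi(\delta)$ of the parametrization supplied by Theorem~\ref{thm:BZ22-qualitative}. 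Once the linearized equation and its coercivity on the gauge-fixed complement are set up cleanly, the absorption argument and the final bounds on $u$ are routine.
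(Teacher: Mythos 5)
Your proposal has the right perturbative spirit---gauge-fix, spectral gap of the Jacobi operator $\tfrac12(\Delta_{\mathbb S^2}+2)$ on the complement of the $\ell\le 1$ modes, absorb quadratic errors---but the central step (ii) would not close as stated, and this is exactly the difficulty the paper is designed to avoid. If you Taylor-expand the scalar mean curvature pointwise as $H-\overline H=\mathcal L(z)+Q(h)$, the quadratic remainder $Q$ contains terms of the schematic form $\nabla^2 h\cdot\nabla h$. To bound these in $L^2$ by $\Psi(\delta)\,\|h\|_{W^{2,2}}$ you would need $\|\nabla h\|_{L^\infty}\lesssim\Psi(\delta)$, which is \emph{not} available: $W^{2,2}(\mathbb S^2)$ embeds into $W^{1,p}$ for every finite $p$ but not into $W^{1,\infty}$, and the qualitative input supplies only $\|h\|_{W^{2,2}}+\|u\|_{L^\infty}\le\Psi(\delta)$. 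So the one-shot coercivity-plus-absorption argument on the strong-form linearization of $H$ gets stuck.

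The paper's proof is genuinely two-stage and never expands $H$ pointwise. It first obtains $W^{1,2}$-control from an integral Dirichlet-energy identity (Lemmas~\ref{lem:energy-identities-c}--\ref{lem:quad-form-c}, Proposition~\ref{prop:alexandrov-cmc-control}): using $\Delta f=-H(f_1\times f_2)$ and testing against $h$, the CMC defect enters linearly paired only with $\|h\|_{L^2}$, all the quadratic remainders involve only \emph{first} derivatives of $h$ (hence are $O(\delta^3)$), and the spectral gap appears through the coercive form $\int(|\nabla z|^2-2z^2)$ in the normal component $z$, with low modes $O(\delta^2)$ from the gauge fixing. Only then does it upgrade to $W^{2,2}$, via the exact identity $\Delta h=\vec H e^{2u}+2f_0=(\vec H+2f)e^{2u}-2f(e^{2u}-1)-2h$ \emph{without} linearizing $\vec H$: the factor $\vec H+2f$ is controlled independently by Minkowski-type formulas (Lemma~\ref{lem:Hplus2x-L2}), and $e^{2u}-1$ is a first-order expression in $\nabla h$, so no $\nabla^2 h\cdot\nabla h$ product ever appears. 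Two further ingredients you would need to make explicit: the tangential component $v$ of $h=v+zn$ is not seen by the Jacobi operator and is instead controlled to size $O(\delta^2)$ via the Cauchy--Riemann operator $Dv=Q(dh)$ coming from the conformality relation (Lemma~\ref{lem:v-Lp})---this is where the M\"obius part of the gauge fixing, not just translation and dilation, is essential; and the $L^\infty$ bound on $u$ is \emph{not} a Sobolev embedding but a Wente/compensated-compactness estimate for $-\Delta u=\ast(de_1\wedge de_2 - de_1^0\wedge de_2^0)$, which is the mechanism behind the ``De~Lellis--M\"uller-type estimates'' you correctly anticipate being needed.
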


\textbf{Relation to perimeter--based formulations.}
Very recently, Julin--Morini--Oronzio--Spadaro~\cite{JulinMoriniOronzioSpadaro25}
proved a sharp quantitative Alexandrov inequality in $\mathbb{R}^3$: for every
$\delta_0>0$ there exists $C=C(\delta_0)>0$ such that any $C^2$--regular set
$E\subset\mathbb{R}^3$ with $|E|=|B_1|=\frac{4\pi}{3}$ and
\begin{equation}\label{eq:JMOS-perimeter-assumption}
P(E)\le 4\pi\,\sqrt[3]{2}-\delta_0
\end{equation}
satisfies
\begin{equation}\label{eq:JMOS-quant-Alex}
P(E)-P(B_1)\le C\,\|H_E-\overline H_E\|_{L^2(\partial E)}^2.
\end{equation}
Moreover, if $\partial E$ is smooth and the CMC defect is sufficiently small,
then $\partial E$ is diffeomorphic to $\mathbb{S}^2$.  The perimeter constraint
\eqref{eq:JMOS-perimeter-assumption} can be viewed as a natural non--degeneracy
assumption excluding the two--bubble threshold $4\pi\sqrt[3]{2}$.

For our purposes it is useful to relate \eqref{eq:JMOS-perimeter-assumption} to
the two--sphere--threshold Willmore bound \eqref{eq:Willmore-below-two-spheres}.
This implication can be deduced directly from the quantitative Alexandrov
theorem of Julin--Niinikoski ~\cite{JulinNiinikoski23}.  Since the argument is
short, we include it in Section~4 for completeness.  As a consequence, our main
theorem admits an equivalent formulation in which the Willmore assumption is
replaced by the area bound \eqref{eq:JMOS-perimeter-assumption}, at the expense
of assuming that the surface arises as the boundary of a smooth domain (since
without the Willmore bound embeddedness is not available a priori).

\begin{theorem}[Linear quantitative stability under an area bound]
\label{thm:main-area}
For any $\beta>0$ there exist
$\delta_0=\delta_0(\beta)>0$ and $C=C(\beta)<\infty$
with the following property.

Let $\Omega\subset\mathbb{R}^3$ be a bounded domain whose
 boundary $\Sigma$ is a smooth embedded surface, and
let $\iota:\Sigma\to\mathbb{R}^3$ denote the inclusion map.
After rescaling so that $|\Omega|=\tfrac{4\pi}{3}$, assume  that
\begin{equation}\label{eq:area-excludes-two-spheres}
\mu_g(\Sigma)\le 4\pi\,\sqrt[3]{2}-\beta,
\qquad
\|H-\overline H\|_{L^2(\Sigma)}\le \delta_0.
\end{equation}
Then there exists a conformal parametrization $f:\mathbb{S}^2\to \Sigma$ with
\[
df\otimes df=e^{2u}g_{\mathbb{S}^2},
\]
such that, after composing $f$ with a suitable translation and
writing $h:=f-\mathrm{id}_{\mathbb{S}^2}$, one has
\[
\|h\|_{W^{2,2}(\mathbb{S}^2)}+\|u\|_{L^\infty(\mathbb{S}^2)}
\;\le\; C\,\|H-\overline H\|_{L^2(\Sigma)}.
\]
\end{theorem}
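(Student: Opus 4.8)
The plan is to \emph{reduce} Theorem~\ref{thm:main-area} to Theorem~\ref{thm:main} by showing that, for $\delta_0=\delta_0(\beta)$ small, the hypotheses \eqref{eq:area-excludes-two-spheres} force the sub--two--spheres Willmore bound \eqref{eq:Willmore-below-two-spheres} with the \emph{fixed} value $\alpha=\tfrac14$. Granting this, both inequalities in \eqref{eq:BZ22-assumptions} hold (with $\alpha=\tfrac14$) provided $\delta_0(\beta)\le\sqrt{\delta_0(\tfrac14)}$, where $\delta_0(\tfrac14)$ is the threshold of Theorem~\ref{thm:main}; since both statements use the normalization $|\Omega|=\tfrac{4\pi}{3}$, Theorem~\ref{thm:main} then applies directly to $\iota:\Sigma\hookrightarrow\mathbb{R}^3$ and produces the conformal parametrization $f:\mathbb{S}^2\to\Sigma$ with $\|h\|_{W^{2,2}(\mathbb{S}^2)}+\|u\|_{L^\infty(\mathbb{S}^2)}\le C\,\|H-\overline H\|_{L^2(\Sigma)}$, $h:=f-\mathrm{id}_{\mathbb{S}^2}$, where $C(\beta):=C(\tfrac14)$. (Connectedness of $\Sigma$ is no loss of generality: a disconnected boundary carries a large $L^2$--CMC defect and is excluded for $\delta_0$ small, as also follows from the argument below.)

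For the key implication I use the $L^2(\Sigma)$--orthogonality of $H-\overline H$ to the constants, giving
\[
\int_{\Sigma}|H|^2\,d\mu_g=\|H-\overline H\|_{L^2(\Sigma)}^2+\overline H^{\,2}\,\mu_g(\Sigma).
\]
As $\mu_g(\Sigma)\le 4\pi\sqrt[3]{2}$ and $16\,\sqrt[3]{2}<24$, it suffices to prove $\overline H^{\,2}\to 4$ as $\delta_0\downarrow 0$: then, for $\delta_0(\beta)$ small, $\int_\Sigma|H|^2\,d\mu_g<24\pi=32\pi(1-\tfrac14)$, which is \eqref{eq:Willmore-below-two-spheres}. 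To control $\overline H$ I invoke the quantitative Alexandrov theorem of Julin--Niinikoski~\cite{JulinNiinikoski23} with the a priori perimeter bound $P(\Omega)=\mu_g(\Sigma)\le 4\pi\sqrt[3]{2}$: for $\delta_0$ small it furnishes finitely many disjoint balls $B_\rho(x_1),\dots,B_\rho(x_N)$ of a common radius $\rho$, with $N$ bounded in terms of the perimeter, such that $\Omega$ is $L^1$--close to $\bigcup_i B_\rho(x_i)$ and $\overline H^{\,2}$ is close to $4/\rho^2$, the errors controlled by a power of $\|H-\overline H\|_{L^2(\Sigma)}$. The volume normalization $|\Omega|=\tfrac{4\pi}{3}$ forces $N\rho^3\to 1$; and since $4\pi\sqrt[3]{2}$ is exactly the perimeter of two disjoint equal balls of total volume $\tfrac{4\pi}{3}$, if $N\ge 2$ then the perimeter of $\bigcup_i B_\rho(x_i)$ is close to $4\pi N^{1/3}\ge 4\pi\sqrt[3]{2}$, contradicting $\mu_g(\Sigma)\le 4\pi\sqrt[3]{2}-\beta$ once $\delta_0=\delta_0(\beta)$ is small. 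Hence $N=1$, so $\rho\to 1$ and $\overline H^{\,2}\to 4$, as needed.

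The main obstacle, and the only place where \eqref{eq:area-excludes-two-spheres} is genuinely used, is the \emph{exclusion of bubbling}: without it a surface with small $L^2$--CMC defect could be $L^1$--close to two well--separated spheres, a configuration to which Theorem~\ref{thm:main} does not apply and which the conformal $W^{2,2}$ topology cannot see. One may alternatively bound $\overline H$ by combining the Minkowski identity $\int_\Sigma H\,\langle x,\nu\rangle\,d\mu_g=-2\,\mu_g(\Sigma)$ (outer unit normal $\nu$, sign convention $H=\langle\vec H,\nu\rangle$) with $\int_\Sigma\langle x,\nu\rangle\,d\mu_g=3|\Omega|$, which gives
\[
\bigl|\overline H\bigr|\;\le\;\frac{2\,\mu_g(\Sigma)}{3|\Omega|}
 +\frac{1}{3|\Omega|}\,\|H-\overline H\|_{L^2(\Sigma)}\,\|\langle x,\nu\rangle\|_{L^2(\Sigma)};
\]
but the last term still needs an a priori bound on the extrinsic size of $\Sigma$, which the area bound alone does not provide (a ball with a long thin tentacle has arbitrarily large diameter, small area, and volume $\tfrac{4\pi}{3}$), so one appeals once more to~\cite{JulinNiinikoski23}. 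Finally, since $\alpha=\tfrac14$ is fixed, $C(\beta)$ may be taken independent of $\beta$; only the smallness threshold $\delta_0(\beta)$ genuinely depends on $\beta$.
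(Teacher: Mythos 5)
Your proof is correct and follows essentially the same route as the paper's Section~4: the paper proves Proposition~\ref{prop:area-implies-willmore} (area bound $\Rightarrow$ sub--two--spheres Willmore bound) by invoking Julin--Niinikoski, using the decomposition $\int_\Sigma|H|^2=\|H-\overline H\|_{L^2}^2+\overline H^{\,2}\mu_g(\Sigma)$, and forcing $N=1$ from the volume and area constraints, exactly as you do, and then concludes by applying Theorem~\ref{thm:main}. The only differences are cosmetic --- the paper runs a contradiction argument with a free parameter $\alpha$ whereas you argue directly with $\alpha=\tfrac14$ fixed and the slightly cruder but sufficient bound $16\sqrt[3]{2}<24$ --- together with your helpful remarks on connectedness and the $\beta$--independence of $C$, which the paper leaves implicit.
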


With this reformulation, the assumptions of Theorem~\ref{thm:main-area} align
with those in \cite{JulinMoriniOronzioSpadaro25}.  In this sense, our result may
be viewed as a geometric counterpart of their quantitative Alexandrov
inequality.

\medskip
\textbf{A varifold formulation.}
Although Theorem~\ref{thm:main} is stated for smooth immersions, the same linear
control admits a convenient varifold formulation once one invokes density of
smooth embedded surfaces in the relevant class.  We record one such statement
below.

\begin{theorem}[Varifold stability for almost--CMC spheres]
\label{thm:varifold-almost-cmc}
Fix $\alpha\in(0,\tfrac12)$.  There exist $\delta_0=\delta_0(\alpha)>0$ and
$C=C(\alpha)<\infty$ with the following property.

Let $V=\underline{v}(\Sigma,1)$ be an integral $2$--varifold in
$\mathbb{R}^3$ with unit density and generalized mean curvature
$\vec H\in L^2(d\|V\|)$.  Assume that $\Sigma:=\mathrm{spt}\,\|V\|$ is compact and
that
\begin{equation}\label{eq:varifold-almost-cmc-assumptions}
\int_\Sigma |H-\overline H|^2\,d\|V\| \le \delta^2,
\qquad
\int_\Sigma |H|^2\,d\|V\| \le 32\pi(1-\alpha),
\qquad
0<\delta\le\delta_0,
\end{equation}
where $H:=\langle \vec H,\vec N\rangle$ is the scalar mean curvature defined
$\|V\|$--a.e.\ with respect to a measurable choice of unit normal, and
\[
\overline H:=\fint_\Sigma H\,d\|V\|
      :=\frac{1}{\|V\|(\mathbb{R}^3)}\int_\Sigma H\,d\|V\|.
\]

Then $\Sigma$ is homeomorphic to $\mathbb{S}^2$.  Moreover, after a rigid
motion of $\mathbb{R}^3$ and rescaling so that the enclosed volume equals
$\tfrac{4\pi}{3}$, there exists a homeomorphism
\[
f:\mathbb{S}^2\longrightarrow \Sigma
\]
with $f\in W^{2,2}(\mathbb{S}^2;\mathbb{R}^3)$ and a function
$u\in L^\infty(\mathbb{S}^2)$ such that
\[
df\otimes df = e^{2u} g_{\mathbb{S}^2}
\quad\text{a.e.\ on }\mathbb{S}^2,
\]
and the quantitative estimate
\begin{equation}\label{eq:varifold-linear-estimate}
\|f-\mathrm{id}_{\mathbb{S}^2}\|_{W^{2,2}(\mathbb{S}^2)}
+\|u\|_{L^\infty(\mathbb{S}^2)}
\;\le\; C\,\delta
\end{equation}
holds.
\end{theorem}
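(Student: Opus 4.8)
\emph{Strategy of proof.} The plan is to reduce Theorem~\ref{thm:varifold-almost-cmc} to the smooth case, Theorem~\ref{thm:main}, by approximation; the hypotheses on $V$ are precisely what is needed to place it within the regularity and compactness theory for $W^{2,2}$--conformal immersions. First I would note that $\mathcal W(V)=\tfrac14\int_\Sigma|H|^2\,d\|V\|\le 8\pi(1-\alpha)<8\pi$, so that the unit--density integral $2$--varifold $V$ with $\vec H\in L^2(d\|V\|)$ lies below the first energy threshold. By known regularity and density results for such varifolds in $\mathbb R^3$ (regularity in the sense that $\Sigma=\mathrm{spt}\|V\|$ is the image of a $W^{2,2}$--conformal immersion of a closed surface, together with smoothing of $W^{2,2}$ immersions), there is a sequence of smooth embedded closed surfaces $\Sigma_j=\partial\Omega_j\subset\mathbb R^3$ with $\underline{v}(\Sigma_j,1)\to V$ as varifolds and
\[
\mu_{g_j}(\Sigma_j)\to\|V\|(\mathbb R^3),\quad |\Omega_j|\to|\Omega|,\quad \int_{\Sigma_j}|H_j|^2\,d\mu_{g_j}\to\int_\Sigma|H|^2\,d\|V\|,\quad \int_{\Sigma_j}|H_j-\overline H_j|^2\,d\mu_{g_j}\to\int_\Sigma|H-\overline H|^2\,d\|V\| ;
\]
the convergence of the curvature integrals holds because $H$ is a quadratic expression in the second derivatives of the immersion, which converge in $L^2$ under the smoothing, and because $\overline H_j\to\overline H$ and $\mu_{g_j}(\Sigma_j)\to\|V\|(\mathbb R^3)$.

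Next I would normalize and invoke Theorem~\ref{thm:main}. After a rigid motion and a dilation so that $|\Omega|=\tfrac{4\pi}{3}$, and after rescaling each $\Sigma_j$ by $\lambda_j=(\tfrac{4\pi}{3}/|\Omega_j|)^{1/3}\to1$ so that $|\Omega_j|=\tfrac{4\pi}{3}$ (which leaves the scale--invariant integrals $\int|H_j|^2$, $\int|H_j-\overline H_j|^2$ unchanged), the displayed convergences show that for $j$ large one has $\int_{\Sigma_j}|H_j|^2\,d\mu_{g_j}\le 32\pi(1-\tfrac\alpha2)$, that $\mathcal W(\Sigma_j)<8\pi$ — whence $\Sigma_j$ is embedded and connected by Li--Yau \cite{LY} — and that $\int_{\Sigma_j}|H_j-\overline H_j|^2\,d\mu_{g_j}\le\delta^2+\eps_j$ with $\eps_j\downarrow0$. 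Fixing $\delta_0(\alpha)$ so small that $\delta_0(\alpha)^2\le\tfrac12\,\delta_0(\tfrac\alpha2)$, where $\delta_0(\cdot)$ is the threshold of Theorem~\ref{thm:main}, the surface $\Sigma_j$ then satisfies \eqref{eq:BZ22-assumptions} with $\alpha$ replaced by $\tfrac\alpha2$ and defect bound $\le\delta^2+\eps_j$, for $j$ large. Hence Theorem~\ref{thm:main} provides, after a translation $a_j\in\mathbb R^3$, a conformal parametrization $f_j=\mathrm{id}_{\mathbb S^2}+h_j:\mathbb S^2\to\Sigma_j-a_j$ with $df_j\otimes df_j=e^{2u_j}g_{\mathbb S^2}$ and $\|h_j\|_{W^{2,2}(\mathbb S^2)}+\|u_j\|_{L^\infty(\mathbb S^2)}\le C\,\|H_j-\overline H_j\|_{L^2(\Sigma_j)}$, where $C=C(\tfrac\alpha2)$ is the constant of Theorem~\ref{thm:main}.

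Finally I would pass to the limit $j\to\infty$. The right--hand sides being uniformly bounded, along a subsequence $h_j\rightharpoonup h$ weakly in $W^{2,2}(\mathbb S^2)$, hence $h_j\to h$ in $C^0(\mathbb S^2)$ by the compact embedding $W^{2,2}(\mathbb S^2)\hookrightarrow C^0$, and $df_j\to df$ in $L^p(\mathbb S^2)$ for every $p<\infty$ by Rellich--Kondrachov on the $2$--sphere; consequently $e^{2u_j}=\tfrac12|df_j|^2\to\tfrac12|df|^2$ in every $L^q$ and, along a further subsequence, a.e., so $u_j\to u$ a.e. Weak lower semicontinuity of the $W^{2,2}$--norm and Fatou's lemma give
\[
\|h\|_{W^{2,2}(\mathbb S^2)}+\|u\|_{L^\infty(\mathbb S^2)}\le\liminf_{j\to\infty}\bigl(\|h_j\|_{W^{2,2}}+\|u_j\|_{L^\infty}\bigr)\le C\,\limsup_{j\to\infty}\|H_j-\overline H_j\|_{L^2(\Sigma_j)}=C\,\|H-\overline H\|_{L^2(\Sigma)}\le C\delta .
\]
The conformality relation passes to the limit (strong $L^2$--convergence of $df_j$ kills the off--diagonal term and equalizes the diagonal ones in conformal coordinates), and $\|u\|_{L^\infty}\le C\delta$ keeps $e^{2u}$ bounded away from zero, so $f:=\mathrm{id}_{\mathbb S^2}+h$ is a genuine conformal $W^{2,2}$--immersion without branch points. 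Since $\underline{v}(\Sigma_j,1)\to V$ with uniformly $L^2$--bounded mean curvature, standard density estimates for such varifolds give $\Sigma_j\to\Sigma$ in Hausdorff distance and let the $a_j$ subconverge to some $a$; together with $f_j\to f$ in $C^0$ and $f_j(\mathbb S^2)=\Sigma_j-a_j$ this yields $f(\mathbb S^2)=\Sigma-a$, so after translating by $a$ we may assume $f(\mathbb S^2)=\Sigma$. A conformal $W^{2,2}$--immersion that is $C^0$--close to $\mathrm{id}_{\mathbb S^2}$ is a homeomorphism onto its image (De~Lellis--M\"uller \cite{dLMu}, cf.\ \cite{BiZhou22}); hence $f:\mathbb S^2\to\Sigma$ is a homeomorphism, $\Sigma$ is homeomorphic to $\mathbb S^2$, and \eqref{eq:varifold-linear-estimate} holds.

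The step I expect to be the main obstacle is the first one: making the ``known regularity and density results'' precise — namely, that a unit--density integral $2$--varifold with $L^2$ mean curvature below the $8\pi$ Willmore threshold is the image of a $W^{2,2}$--conformal immersion of a closed surface and is approximable, with simultaneous convergence of area, enclosed volume, Willmore energy and $L^2$--CMC defect, by smooth embedded closed surfaces. This requires assembling Allard--type regularity, the structure and smoothing theory of $W^{2,2}$--conformal immersions, and the Li--Yau inequality, and checking that the approximation respects every normalization entering the hypotheses of Theorem~\ref{thm:main}. Granting that, the remaining two steps are routine compactness and lower--semicontinuity arguments.
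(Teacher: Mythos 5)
Your proposal is correct and takes essentially the same route the paper does: the paper's own derivation is a two--sentence sketch saying to approximate $\Sigma$ by smooth embedded surfaces while preserving unit density and $L^2$--control of the mean curvature (citing Corollary~5.7 of \cite{RS25} or Theorem~B.4 of \cite{BiZhou25b} for this step), to apply Theorem~\ref{thm:main} to the approximating surfaces, and to pass to the limit by compactness. You have correctly identified the approximation/density result as the real input and as the only nontrivial obstacle, and your compactness and lower--semicontinuity arguments fill in what the paper merely asserts. One small cosmetic error: $H$ is linear, not quadratic, in the second derivatives of the immersion, but this does not affect your use of $L^2$--convergence of second derivatives to obtain convergence of the curvature integrals and of the CMC defect (the latter via $\int|H-\overline H|^2=\int|H|^2-(\int H)^2/\mu(\Sigma)$ together with convergence of area, $\int H$, and $\int|H|^2$).
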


\smallskip
\noindent
\emph{Derivation from the smooth case.}
Under \eqref{eq:varifold-almost-cmc-assumptions}, one may approximate $\Sigma$ by a
sequence of smooth embedded surfaces $\Sigma_k$ in $W^{2,2}$ while preserving
unit density and the $L^2$--control of the mean curvature; this is provided,
for instance, for instance, by Corollary~5.7 in \cite{RS25} or Theorem~B.4 in \cite{BiZhou25b}.   Applying Theorem~\ref{thm:main} to
$\Sigma_k$
yields uniform linear bounds as in \eqref{eq:varifold-linear-estimate}, and a
compactness argument then passes the estimate to the varifold limit $V$.

\begin{remark}[A finite--perimeter variant]
\label{rem:finite-perimeter-variant}
In Theorem~\ref{thm:varifold-almost-cmc} the two--sphere--threshold Willmore
bound is used as a single--bubble assumption.  In view of the area formulation
(Theorem~\ref{thm:main-area}), one can replace this curvature bound by a
perimeter bound, provided the surface is known a priori to arise as the
boundary of a set of finite perimeter.

More precisely, let $E\subset\mathbb{R}^3$ be a bounded Caccioppoli set with
reduced boundary $\Sigma:=\partial^* E$, and assume that after
rescaling $|E|=\tfrac{4\pi}{3}$ one has
\[
\mu_g(\Sigma)\le 4\pi\,\sqrt[3]{2}-\beta,
\qquad
\int_{\Sigma}|H-\overline H|^2\,d\mu_g \le \delta^2,
\qquad
0<\delta\le \delta_0,
\]
with $\delta_0=\delta_0(\beta)$ as in
Theorem~\ref{thm:main-area}.  Then the same conclusion as in
Theorem~\ref{thm:main-area} holds.
\end{remark}

\medskip

\textbf{Strategy of the proof.}
Theorem~\ref{thm:BZ22-qualitative} reduces the quantitative problem to a
perturbative regime: after normalization and a suitable choice of rigid motion
(and conformal gauge), we may assume that $f$ is a $W^{2,2}$--conformal
immersion close to the standard embedding $f_0(x)=x$ on $\mathbb{S}^2$, with a
small conformal factor $u$.

The core of the paper consists of a linearized analysis around the round
sphere.  In Section~2 we derive a precise quadratic expansion of the geometric
energy governing the area excess and identify the leading quadratic form in
the normal component $z$ of the perturbation $h$.  By exploiting the spectral
properties of the spherical Laplacian together with the normalization and
orthogonality conditions, we obtain a coercive estimate that controls the
$W^{1,2}$--norm of the perturbation in terms of the CMC defect.

In Section~3 we estimate the deviation of the average mean curvature
$\overline H$ from the spherical value $2$ and use elliptic regularity to upgrade
the control to the full $W^{2,2}$--norm of $h$, as well as to derive an
$L^\infty$--bound for the conformal factor $u$.  This yields the desired linear
estimate \eqref{eq:linear-main-estimate} and completes the proof of
Theorem~\ref{thm:main}.

Finally, in Section~4 we give a short argument showing that the  two--sphere--threshold Willmore bound condition can equivalently be expressed in terms of an area
bound, and we derive the corresponding area formulation of our main theorem
(Theorem~\ref{thm:main-area}).

\section{$W^{1,2}$--Estimate}

Throughout this section we assume the smallness condition
\begin{equation}\label{eq:smallness-h-u}
\|f-f_0\|_{W^{2,2}(\mathbb{S}^2)}+\|u\|_{C^0(\mathbb{S}^2)}\ll 1,
\end{equation}
together with the volume normalization
\begin{equation}\label{volume normalization}
|\Omega|=\frac{4\pi}{3},
\end{equation}
which have been discussed in the
Introduction.

We normalize the parametrization of $f$ by minimizing its $L^2$--distance to
the standard embedding $f_0(x)=x$ in the translational and
M\"obius parameters. For $a\in\mathbb{R}^3$ and a M\"obius transformation
$\varphi:\mathbb{S}^2\to\mathbb{S}^2$, define
\[
\mathcal{E}(a,\varphi)
 := \int_{\mathbb{S}^2} |f\circ\varphi + a - f_0|^2\,dx.
\]

\begin{lemma}[Existence of a minimizer]\label{lem:partial-min}
Under the smallness assumption \eqref{eq:smallness-h-u} there exist
$\varphi_0\in \mathrm{Mob}(\mathbb{S}^2)$ and $a_0\in\mathbb{R}^3$ such that
$(a_0,\varphi_0)$ minimizes $\mathcal{E}(a,\varphi)$ among all translations
$a\in\mathbb{R}^3$ and all M\"obius transformations
$\varphi\in \mathrm{Mob}(\mathbb{S}^2)$.

Moreover, there exists $C>0$ such that,
\begin{equation}\label{eq:a0phi0-small}
|a_0|\le C\,\|f-f_0\|_{L^\infty(\mathbb{S}^2)},\qquad
\|\varphi_0-\mathrm{Id}\|_{L^2(\mathbb{S}^2)}\le C\,\|f-f_0\|_{L^\infty(\mathbb{S}^2)} .
\end{equation}
\end{lemma}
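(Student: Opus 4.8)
\medskip
\noindent\textbf{Proof plan.}
I would regard $\mathcal{E}$ as a smooth function on the finite--dimensional but non--compact parameter space $\mathbb{R}^3\times\mathrm{Mob}(\mathbb{S}^2)$, produce a minimizer by a concentration--compactness argument, and then extract the quantitative bounds by linearizing at $(0,\mathrm{Id})$. For fixed $\varphi$ the map $a\mapsto\mathcal{E}(a,\varphi)$ is a coercive quadratic, minimized at $\bar a(\varphi):=-\fint_{\mathbb{S}^2}f\circ\varphi\,dx$ (using $\fint_{\mathbb{S}^2}f_0\,dx=0$), with $\inf_a\mathcal{E}(a,\varphi)=\int_{\mathbb{S}^2}|f\circ\varphi-\fint_{\mathbb{S}^2}f\circ\varphi-f_0|^2\,dx$, so along a minimizing sequence $(a_k,\varphi_k)$ one may take $a_k=\bar a(\varphi_k)$. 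The competitor $(0,\mathrm{Id})$ gives $\inf\mathcal{E}\le\|f-f_0\|_{L^2}^2\le 4\pi\|f-f_0\|_{L^\infty}^2=:\varepsilon^2\ll1$. If $\varphi_k$ left every compact subset of $\mathrm{Mob}(\mathbb{S}^2)$, then after passing to a subsequence $\varphi_k\to\mathrm{const}$ a.e.\ on $\mathbb{S}^2$; since $f\in W^{2,2}\hookrightarrow C^0$ is bounded, dominated convergence gives $f\circ\varphi_k\to\mathrm{const}$ in $L^2$ and hence $\inf_a\mathcal{E}(a,\varphi_k)\to\int_{\mathbb{S}^2}|f_0|^2\,dx=4\pi>\varepsilon^2$, contradicting minimality. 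Thus $\{\varphi_k\}$ stays in a compact set, $\{a_k=\bar a(\varphi_k)\}$ is bounded, and a limit exists by continuity of $\mathcal{E}$.

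For the estimate \eqref{eq:a0phi0-small}, I would first record that $\mathcal{E}(a_0,\varphi_0)\le\varepsilon^2$. Writing $f=f_0+h$, identifying $f_0$ with $\mathrm{id}_{\mathbb{S}^2}$ so that $f_0\circ\varphi=\varphi$, and using $\|h\circ\varphi_0\|_{L^2}\le(4\pi)^{1/2}\|h\|_{L^\infty}$ since $\varphi_0$ is a bijection, the triangle inequality yields
\[
\|\varphi_0-\mathrm{id}_{\mathbb{S}^2}+a_0\|_{L^2(\mathbb{S}^2)}\le\mathcal{E}(a_0,\varphi_0)^{1/2}+\|h\circ\varphi_0\|_{L^2}\le C_1\,\|f-f_0\|_{L^\infty(\mathbb{S}^2)} .
\]
The left side being small then forces $\varphi_0$ close to $\mathrm{Id}$ and $a_0$ close to $0$: the continuous function $(\varphi,a)\mapsto\|\varphi-\mathrm{id}_{\mathbb{S}^2}+a\|_{L^2}$ has near--zero sublevel sets that are precompact (a degenerating $\varphi$ pushes it toward $\|{-\mathrm{id}_{\mathbb{S}^2}}+\mathrm{const}\|_{L^2}\ge(4\pi)^{1/2}$, while $|a|\to\infty$ pushes it to $+\infty$ because $|\varphi|\equiv1$ on $\mathbb{S}^2$), and its only zero is $(\mathrm{Id},0)$, since $\psi-\mathrm{id}_{\mathbb{S}^2}\equiv\mathrm{const}$ with $|\psi|\equiv1$ forces $\psi=\mathrm{id}_{\mathbb{S}^2}$.

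Hence one may work in a fixed chart around $\mathrm{Id}$, in which $\varphi\mapsto\varphi-\mathrm{id}_{\mathbb{S}^2}$ is a smooth embedding into $L^2(\mathbb{S}^2;\mathbb{R}^3)$ whose differential at $\mathrm{Id}$ carries a conformal Killing field $X$ of $\mathbb{S}^2$ to $X$ itself, so $\|\varphi-\mathrm{id}_{\mathbb{S}^2}\|_{L^2}$ is comparable to the corresponding $\|X\|$. The decisive linear fact is that the constant maps $a\in\mathbb{R}^3$ together with the $6$--dimensional space of conformal Killing fields of $\mathbb{S}^2$ span a $9$--dimensional subspace of $L^2(\mathbb{S}^2;\mathbb{R}^3)$, i.e.\ $(a,X)\mapsto a+X$ is injective: the rotational generators $e_i\times x$ and the $\ell=2$ part of the conformal generators $e_i-(e_i\cdot x)\,x$ force the non--constant part of $X$ to vanish, after which $X=\mathrm{const}$ forces $X=0$ and then $a=0$. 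An injective linear map between finite--dimensional spaces is bounded below, so on the chart $\|\varphi-\mathrm{id}_{\mathbb{S}^2}+a\|_{L^2}\ge c\,(\|\varphi-\mathrm{id}_{\mathbb{S}^2}\|_{L^2}+|a|)$, and combining with the previous display gives \eqref{eq:a0phi0-small}.

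The main obstacle is this last lower bound: one must pin down the linearization of $\varphi\mapsto f_0\circ\varphi$ at the identity, verify the injectivity of $(a,X)\mapsto a+X$ (equivalently, that the conformal Killing fields meet the constants only in $\{0\}$), and---most delicately---use the concentration--compactness input to guarantee that the minimizer genuinely lies in a neighbourhood of $\mathrm{Id}$ small enough for the infinitesimal estimate to be valid, so that no far--away Möbius configuration can beat the competitor $(0,\mathrm{Id})$.
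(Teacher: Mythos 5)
Your existence argument (center in $a$, rule out degeneration of the M\"obius parameter by showing $f\circ\varphi_k\to\mathrm{const}$ a.e.\ and hence $\inf_a\mathcal E\to 4\pi$) is essentially the paper's: it too compares $\mathcal E$ to an auxiliary functional and uses that $\phi_{r_ip_i}\to p$ a.e.\ forces $G\ge c_*>0$ on the boundary of the M\"obius parameter ball, so minimizing sequences stay in a fixed compact set.

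For the quantitative bound \eqref{eq:a0phi0-small} you diverge from the paper and take a genuinely different, and perfectly valid, route. The paper proceeds directly: since $\varphi_0(x)\in\mathbb S^2$, one has $|\varphi_0(x)+a_0-x|\ge\bigl||x-a_0|-1\bigr|$, and an explicit integration of $\bigl||x-a|-1\bigr|^2$ over $\mathbb S^2$ gives $J(a)\ge c\,|a|^2$ for $|a|\le A$, hence $|a_0|\le C\delta_\infty$ immediately and then $\|\varphi_0-\mathrm{Id}\|_{L^2}\le C\delta_\infty$ by the triangle inequality; no chart, no linearization, no discussion of conformal Killing fields. You instead use a qualitative step (small sublevel sets of $(\varphi,a)\mapsto\|\varphi-\mathrm{id}+a\|_{L^2}$ are precompact and concentrate at the unique zero $(\mathrm{Id},0)$) to localize near the identity, and then a quantitative step (the linearized map $(a,X)\mapsto a+X$ from $\mathbb R^3\times\mathfrak{conf}(\mathbb S^2)$ into $L^2(\mathbb S^2;\mathbb R^3)$ is injective, hence bounded below on finite--dimensional spaces, and this persists in a small chart). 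Your injectivity argument is correct (a constant $a$ cannot coincide with a tangent field since that would force $a\perp x$ for all $x\in\mathbb S^2$), and the passage from linear injectivity to a uniform lower bound on a fixed small chart is standard for smooth finite--dimensional maps. The trade--off: the paper's computation is shorter, self--contained, and gives explicit constants; your route is more conceptual, makes visible that the $9$--dimensional parameter space acts injectively at first order, and would generalize more readily. The only point to tighten is the transition you yourself flag — you should state explicitly that the precompactness plus uniqueness of the zero forces $(\varphi_0,a_0)$ into any prescribed neighbourhood of $(\mathrm{Id},0)$ once $\delta_\infty$ is small, and only then invoke the chart estimate; as written this is implicit rather than argued.
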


\begin{proof}
Set $\delta_\infty:=\|f-f_0\|_{L^\infty(\mathbb{S}^2)}$, which is small by
\eqref{eq:smallness-h-u} and Sobolev embedding.

We first compare $\mathcal{E}$ with the auxiliary quantity
\[
G(a,\varphi):=\int_{\mathbb{S}^2}|\varphi+a-f_0|^2\,dx,
\qquad a\in\mathbb{R}^3,\ \varphi\in\mathrm{Mob}(\mathbb{S}^2).
\]
Using $f_0(x)=x$ and the identity
\[
f\circ\varphi+a-f_0=(f-f_0)\circ\varphi+(\varphi+a-f_0),
\]
we obtain by the triangle inequality
\[
\|f\circ\varphi+a-f_0\|_{L^2}
\ge \|\varphi+a-f_0\|_{L^2}-\|(f-f_0)\circ\varphi\|_{L^2}
\ge \|\varphi+a-f_0\|_{L^2}-\sqrt{4\pi}\,\delta_\infty,
\]
and therefore
\[
\mathcal{E}(a,\varphi)=\|f\circ\varphi+a-f_0\|_{L^2}^2
\ge\Bigl(\|\varphi+a-f_0\|_{L^2}-\sqrt{4\pi}\,\delta_\infty\Bigr)^2.
\]
Since $(X-Y)^2\ge \frac12 X^2-Y^2$, this implies the rough bound
\begin{equation}\label{eq:E-vs-G-natural}
\mathcal{E}(a,\varphi)\ge \frac12\,G(a,\varphi)-4\pi\,\delta_\infty^2.
\end{equation}

Next, $\mathcal{E}(a,\varphi)\to\infty$ as $|a|\to\infty$, uniformly in $\varphi$.
Indeed, for $\delta_\infty\le1$ we have the pointwise estimate
$|f\circ\varphi-f_0|\le |f\circ\varphi-f_0\circ\varphi|+|f_0\circ\varphi-f_0|
\le \delta_\infty+2\le 3$, hence
\[
\mathcal{E}(a,\varphi)
=\int_{\mathbb{S}^2}|a+(f\circ\varphi-f_0)|^2\,dx
\ge 4\pi|a|^2-24\pi|a|.
\]
It follows that any minimizing sequence must remain in a bounded region
$\{|a|\le A\}$ for some $A<\infty$.

For the M\"obius parameter, we use the standard decomposition
$\varphi=R\circ\phi_v$ with $R\in SO(3)$ and $v\in B^3=\{v\in\mathbb{R}^3:|v|<1\}$,
where
\[
\phi_v(x):=\frac{(1-|v|^2)x+2(1+v\cdot x)\,v}{1+|v|^2+2v\cdot x}\in\mathbb{S}^2.
\]
If $v=r_i p_i$ with $p_i\in\mathbb{S}^2$ fixed and $r_i\uparrow1$, then after passing to a subsequence, we can assume $p_i\to p$ and $a_i\to a$, and
$\phi_{r_i p_i}(\cdot)\to p\in \mathbb{S}^2$ almost everywhere and hence in $L^2(\mathbb{S}^2)$.

As a result,
\begin{align*}
\int_{\mathbb{S}^2}|\phi_{r_i p_i}+a_i-x|^2\,dx\to \int_{\mathbb{S}^2}|p+a-x|^2dx=\int_{\mathbb{S}^2}(|p+a|^2+1)dx\ge 4\pi.
\end{align*}
Since $SO(3)$ is compact and rotations preserve the $L^2$--norm, the same
conclusion holds for $\varphi=R\circ\phi_v$. In particular, there exists
$\eta\in(0,1)$ and $c_*>0$ such that
\begin{equation}\label{eq:G-boundary-barrier-natural}
\inf_{a\in\mathbb{R}^3}G(a,\varphi)\ge c_*
\qquad\text{whenever }\varphi=R\circ\phi_v\text{ with }|v|\ge 1-\eta.
\end{equation}
Choosing $\delta_\infty$ so small that $4\pi\delta_\infty^2\le \frac14 c_*$, the
comparison \eqref{eq:E-vs-G-natural} yields
\[
\inf_{a\in\mathbb{R}^3}\mathcal{E}(a,\varphi)\ge \frac12c_*-4\pi\delta_\infty^2
\ge \frac14c_*
\qquad\text{for all }\varphi\text{ with }|v|\ge 1-\eta,
\]
whereas
\[
\mathcal{E}(0,\mathrm{Id})=\int_{\mathbb{S}^2}|f-f_0|^2\,dx \le 4\pi\delta_\infty^2
\le \frac14c_*.
\]
Consequently, a minimizing sequence cannot degenerate to the boundary
$|v|\to1$, and together with the coercivity in $a$ this shows that it suffices
to minimize $\mathcal{E}$ over the compact set
\[
K:=\{\,a\in\mathbb{R}^3:|a|\le A\,\}\times
\{\,R\circ\phi_v : R\in SO(3),\ |v|\le 1-\eta\,\}.
\]
The map $(a,\varphi)\mapsto \mathcal{E}(a,\varphi)$ is continuous on $K$, hence
it attains its minimum at some $(a_0,\varphi_0)\in K$. By construction,
$(a_0,\varphi_0)$ is also a global minimizer among all translations and all
M\"obius transformations.

For the estimate, minimality gives
\[
\mathcal{E}(a_0,\varphi_0)\le \mathcal{E}(0,\mathrm{Id})\le 4\pi\delta_\infty^2.
\]
Combining this with \eqref{eq:E-vs-G-natural} yields
\begin{equation}\label{eq:G-small-natural}
G(a_0,\varphi_0)\le C\,\delta_\infty^2.
\end{equation}

We next bound $a_0$ quantitatively. Since $\varphi_0(x)\in\mathbb{S}^2$ for all
$x$, we have
\[
|\varphi_0(x)+a_0-x|
=|\varphi_0(x)-(x-a_0)|
\ge \mathrm{dist}(x-a_0,\mathbb{S}^2)
= \bigl||x-a_0|-1\bigr|.
\]
Consequently,
\[
G(a_0,\varphi_0)\ge
\int_{\mathbb{S}^2}\bigl(\,||x-a_0|-1|\,\bigr)^2\,dx
=:J(a_0).
\]
Recall from the coercivity in the translation parameter that any minimizer
satisfies $|a_0|\le A$ for some fixed $A<\infty$. For such $a$ we estimate
\[
\bigl||x-a|-1\bigr|
= \frac{\bigl||x-a|^2-1\bigr|}{|x-a|+1}
= \frac{\bigl||a|^2-2a\cdot x\bigr|}{|x-a|+1}
\ge \frac{\bigl||a|^2-2a\cdot x\bigr|}{|a|+2}
\ge \frac{\bigl||a|^2-2a\cdot x\bigr|}{A+2}.
\]
Squaring and integrating yields
\[
J(a)\ge \frac{1}{(A+2)^2}\int_{\mathbb{S}^2}\bigl(|a|^2-2a\cdot x\bigr)^2\,dx.
\]
Using $\int_{\mathbb{S}^2}x\,dx=0$ and
$\int_{\mathbb{S}^2}(a\cdot x)^2\,dx=\frac{4\pi}{3}|a|^2$, we compute
\[
\int_{\mathbb{S}^2}\bigl(|a|^2-2a\cdot x\bigr)^2\,dx
=4\pi|a|^4+\frac{16\pi}{3}|a|^2
\ge \frac{16\pi}{3}|a|^2,
\]
and therefore
\begin{equation}\label{eq:J-coercive-A}
J(a)\ge \frac{16\pi}{3(A+2)^2}\,|a|^2
\qquad\text{for all }|a|\le A.
\end{equation}
Applying \eqref{eq:J-coercive-A} to $a_0$ and combining with
$J(a_0)\le G(a_0,\varphi_0)$ and \eqref{eq:G-small-natural}, we conclude
\[
|a_0|^2 \le C\,J(a_0)\le C\,G(a_0,\varphi_0)\le C\,\delta_\infty^2,
\qquad\text{hence}\qquad |a_0|\le C\,\delta_\infty.
\]
Finally,
\[
\|\varphi_0-\mathrm{Id}\|_{L^2(\mathbb{S}^2)}
=\|\varphi_0-f_0\|_{L^2(\mathbb{S}^2)}
\le \|\varphi_0+a_0-f_0\|_{L^2(\mathbb{S}^2)} + \|a_0\|_{L^2(\mathbb{S}^2)}
\le G(a_0,\varphi_0)^{1/2} + \sqrt{4\pi}\,|a_0|
\le C\,\delta_\infty,
\]
which proves \eqref{eq:a0phi0-small}.
\end{proof}

We now replace $f$ by the normalized map
\[
\tilde{f} := f\circ\varphi_0 + a_0,
\qquad h:=\tilde{f}-f_0.
\]
By Lemma~\ref{lem:partial-min} and $\|\varphi_0-\mathrm{Id}\|_{C^2}\le C\|\varphi_0-\mathrm{Id}\|_{L^2}$, we know
\begin{align*}
\|\tilde{f}-f\|_{W^{2,2}}=\|(f-f_0)\circ \varphi_0+(\varphi_0+a)\|_{W^{2,2}}\le C \|f-f_0\|_{W^{2,2}}
\end{align*}
and $
e^{2\tilde{u}}g_{\mathbb{S}^2}=d\tilde{f}\otimes d\tilde{f}=e^{2u(\varphi_0(x))}\mathrm{Jac}_{\varphi_0}(x)g_{\mathbb{S}^2}$
implies
\begin{align*}
\|\tilde{u}\|_{L^\infty}=\|u(\varphi_0(x))+\frac{1}{2}\ln{\mathrm{Jac}_{\varphi_0}(x)}\|_{L^\infty}\le \|u\|_{L^\infty}+C\|\varphi_{0}-\mathrm{Id}\|_{C^1}\le C(\|u\|_{L^\infty}+\|f-f_0\|_{W^{2,2}}).
\end{align*}

So, $\tilde{f}$ still satisfies the smallness condition \eqref{eq:smallness-h-u} and volume renormalization condition \eqref{volume normalization}.  From now on, we will still use the notation $f$ to denote normalized map $\tilde{f}$. Then, by construction, the point $(a,\varphi)=(0,\mathrm{Id})$ is a \emph{critical point}
of $\mathcal{E}$ with respect to variations in $a$ and in $\varphi$
generated by infinitesimal M\"obius transformations.
This yields the following orthogonality identities.

\begin{lemma}[Orthogonality identities]\label{lem:norm-orth}
For the normalized map $f$, the deviation $h=f-f_0$ satisfies
\begin{align}
\int_{\mathbb{S}^2} h\,dx &= 0,
   \label{eq:orth-const}\\
\int_{\mathbb{S}^2} h\cdot f_*(X)\,dx &= 0
   \qquad \forall\ \text{conformal Killing fields }X.
   \label{eq:orth-conf}
\end{align}
\end{lemma}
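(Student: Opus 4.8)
The plan is to extract the two identities from the first-order optimality conditions for the minimization problem solved in Lemma~\ref{lem:partial-min}, exploiting that after the normalization the pair $(a,\varphi)=(0,\mathrm{Id})$ is an interior critical point of $\mathcal{E}(a,\varphi)=\int_{\mathbb{S}^2}|f\circ\varphi+a-f_0|^2\,dx$. By Lemma~\ref{lem:partial-min} (and the smallness estimate \eqref{eq:a0phi0-small}) the minimizer $(a_0,\varphi_0)$ lies in the interior of the admissible set, so after replacing $f$ by $\tilde f=f\circ\varphi_0+a_0$ — which is the current normalization — the functional $(a,\varphi)\mapsto\mathcal{E}(a,\varphi)$ attains its minimum at the interior point $(0,\mathrm{Id})$. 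Hence all directional derivatives at that point vanish, which is the source of both \eqref{eq:orth-const} and \eqref{eq:orth-conf}.

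First I would compute the variation in the translation parameter. Differentiating $\mathcal{E}(a,\mathrm{Id})=\int_{\mathbb{S}^2}|f+a-f_0|^2\,dx=\int_{\mathbb{S}^2}|h+a|^2\,dx$ at $a=0$ in a direction $b\in\mathbb{R}^3$ gives $2\int_{\mathbb{S}^2}h\cdot b\,dx=0$ for all $b$, which is exactly \eqref{eq:orth-const}. Second I would compute the variation in the M\"obius parameter: write a one-parameter family $\varphi_t\in\mathrm{Mob}(\mathbb{S}^2)$ with $\varphi_0=\mathrm{Id}$ and $\left.\frac{d}{dt}\right|_{t=0}\varphi_t=X$, where $X$ ranges over the space of conformal Killing fields on $\mathbb{S}^2$ (the generators of $\mathrm{Mob}(\mathbb{S}^2)$: rotations and the conformal dilations coming from $\phi_v$). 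Differentiating $\mathcal{E}(0,\varphi_t)=\int_{\mathbb{S}^2}|f\circ\varphi_t-f_0|^2\,dx$ at $t=0$ yields
\[
0=\left.\frac{d}{dt}\right|_{t=0}\mathcal{E}(0,\varphi_t)
=2\int_{\mathbb{S}^2}(f-f_0)\cdot\bigl(df(X)\bigr)\,dx
+2\int_{\mathbb{S}^2}(f-f_0)\cdot\Bigl(\left.\tfrac{d}{dt}\right|_{t=0}(f_0\circ\varphi_t-f_0\circ\varphi_t)\Bigr)\,dx,
\]
but more cleanly: since $f_0\circ\varphi_t$ contributes $df_0(X)=X$ and $f\circ\varphi_t$ contributes $df(X)=f_*(X)$, writing $h=f-f_0$ and splitting gives $0=\int_{\mathbb{S}^2}h\cdot f_*(X)\,dx+\int_{\mathbb{S}^2}h\cdot\big(X - X\big)\,dx$ once one is careful to differentiate $|f\circ\varphi_t-f_0|^2$ rather than $|f\circ\varphi_t-f_0\circ\varphi_t|^2$; carrying out the differentiation of $|f\circ\varphi_t-f_0|^2$ directly yields $2\int_{\mathbb{S}^2}(f-f_0)\cdot f_*(X)\,dx=0$, i.e.\ \eqref{eq:orth-conf}. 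The differentiability of $t\mapsto f\circ\varphi_t$ into $L^2(\mathbb{S}^2)$ is justified because $f\in W^{2,2}\hookrightarrow C^0$ with $df\in W^{1,2}\hookrightarrow L^p$ for all $p<\infty$, and $\varphi_t$ is smooth in $(t,x)$ jointly with $\partial_t\varphi_t$ bounded; a standard difference-quotient/dominated-convergence argument gives $\frac{d}{dt}(f\circ\varphi_t)=df(\varphi_t)[\partial_t\varphi_t]$ in $L^2$.

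The only mild subtlety — and the step I would be most careful about — is the regularity bookkeeping in the M\"obius variation: one must check that $df(X)=f_*(X)\in L^2(\mathbb{S}^2)$ (so that the integral in \eqref{eq:orth-conf} is well defined) and that the chain rule $\frac{d}{dt}(f\circ\varphi_t)=df(\varphi_t)[\partial_t\varphi_t]$ is valid for $f\in W^{2,2}$; both follow from $df\in W^{1,2}(\mathbb{S}^2)\subset L^q(\mathbb{S}^2)$ for every $q<\infty$, the smoothness and uniform boundedness of the M\"obius flow $\varphi_t$ near $t=0$, and the fact that $W^{2,2}$ immersions enjoy the usual chain rule under Lipschitz reparametrizations. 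Once this is in place, both identities are immediate consequences of the vanishing first variation, and no further computation is needed. Everything else is routine.
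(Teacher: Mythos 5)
Your proof is correct and follows essentially the same route as the paper: both derive \eqref{eq:orth-const} and \eqref{eq:orth-conf} as first-order criticality conditions for $\mathcal{E}$ at $(0,\mathrm{Id})$, using the translation variation for the first identity and a M\"obius variation generated by a conformal Killing field for the second. The extra regularity bookkeeping you include is sound but not needed at the level of detail the paper aims for; aside from a garbled intermediate line (the term with $f_0\circ\varphi_t-f_0\circ\varphi_t$, which is identically zero), the argument matches the paper's.
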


\begin{proof}
 Varying $a_t=t\xi$,
\[
0=\frac{d}{dt}\bigg|_{t=0}\mathcal{E}(a_t,\mathrm{Id})
 = 2\int_{\mathbb{S}^2} h\cdot\xi\,dx,
\]
so $\int h=0$. If $\varphi_t$ is generated by a conformal Killing field $X$, then
\(\frac{d}{dt}|_{t=0}(f\circ\varphi_t)=f_*(X)\), hence
\[
0=\frac{d}{dt}\bigg|_{t=0}\mathcal{E}(0,\varphi_t)
 = 2\int h\cdot f_*(X)\,dx.
\]
\end{proof}

We next decompose
\[
h = v + z n, \qquad n=f_0.
\]

\begin{lemma}[Consequences of the normalization]\label{lem:norm-conseq}
For each coordinate function $x^i$ the following hold:
\begin{align}
\int_{\mathbb{S}^2} v\cdot\nabla x^i\,dx
 + \int_{\mathbb{S}^2} z\,x^i\,dx &= 0,
 \label{eq:relation-1}\\[0.3em]
\bigl|\int_{\mathbb{S}^2} v\cdot Af_0\,dx\bigr|
 &\le C\,\|h\|_{W^{1,2}(\mathbb{S}^2)}^2,\qquad \forall A\in\mathfrak{so}(3),
\label{eq:orth-rot}\\[0.3em]
\bigl|\int_{\mathbb{S}^2} v\cdot\nabla x^i\,dx\bigr|
 &\le C\,\|h\|_{W^{1,2}(\mathbb{S}^2)}^2,
 \label{eq:relation-2}\\[0.3em]
\bigl|\int_{\mathbb{S}^2} z\,x^i\,dx\bigr|
 &\le C\,\|h\|_{W^{1,2}(\mathbb{S}^2)}^2.
 \label{eq:z-xi-bound}
\end{align}
\end{lemma}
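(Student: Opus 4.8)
The plan is to read off all four relations directly from the two orthogonality identities of Lemma~\ref{lem:norm-orth} --- namely $\int_{\mathbb{S}^2} h\,dx=0$ and $\int_{\mathbb{S}^2} h\cdot f_*(X)\,dx=0$ for every conformal Killing field $X$ --- combined with the tangential/normal splitting $h=v+zn$, $n=f_0$. The first observation is that, since $f=f_0+h$ and $f_0=\mathrm{id}_{\mathbb{S}^2}$ is the standard inclusion, for any tangent field $X$ on $\mathbb{S}^2$ one has $f_*(X)=X+\partial_X h$, where $\partial_X h$ is the vector--valued derivative of $h$ along $X$. The conformal Killing fields we need are the rotation fields $X=Af_0=Ax$ with $A\in\mathfrak{so}(3)$, which satisfy $\operatorname{div}_{\mathbb{S}^2}X=0$, and the gradient fields $X=\nabla_{\mathbb{S}^2}x^i$, for which $\operatorname{div}_{\mathbb{S}^2}X=\Delta_{\mathbb{S}^2}x^i=-2x^i$; all of these are tangential to $\mathbb{S}^2$, so pairing them against $h=v+zn$ only sees the tangential component $v$.

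For \eqref{eq:relation-1} I would test $\int_{\mathbb{S}^2}h\,dx=0$ against the constant vector $e_i$, i.e.\ use $\int_{\mathbb{S}^2}h\cdot e_i\,dx=0$, and decompose $e_i=\nabla_{\mathbb{S}^2}x^i+x^i n$ into its tangential and normal parts. Since $h=v+zn$ with $v\cdot n=0$ and $|n|=1$, this gives $h\cdot e_i=v\cdot\nabla_{\mathbb{S}^2}x^i+z\,x^i$, and integrating yields exactly \eqref{eq:relation-1}.

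For \eqref{eq:orth-rot} and \eqref{eq:relation-2} I would apply $\int_{\mathbb{S}^2}h\cdot f_*(X)\,dx=0$ with $X$ one of the conformal Killing fields above. Using $h\cdot\partial_X h=\tfrac12\partial_X|h|^2$ and integrating by parts on the closed manifold $\mathbb{S}^2$ --- legitimate since $h\in W^{2,2}(\mathbb{S}^2)\hookrightarrow C^0\cap W^{1,p}$ --- one gets $\int_{\mathbb{S}^2}h\cdot X\,dx=\tfrac12\int_{\mathbb{S}^2}|h|^2\operatorname{div}_{\mathbb{S}^2}X\,dx$. For $X=Af_0$ the divergence vanishes, so in fact $\int_{\mathbb{S}^2}v\cdot Af_0\,dx=\int_{\mathbb{S}^2}h\cdot Af_0\,dx=0$, which a fortiori gives \eqref{eq:orth-rot}; for $X=\nabla_{\mathbb{S}^2}x^i$ we obtain $\int_{\mathbb{S}^2}v\cdot\nabla_{\mathbb{S}^2}x^i\,dx=\int_{\mathbb{S}^2}h\cdot\nabla_{\mathbb{S}^2}x^i\,dx=-\int_{\mathbb{S}^2}|h|^2 x^i\,dx$, and since $|x^i|\le1$ the right-hand side is bounded by $\|h\|_{L^2(\mathbb{S}^2)}^2\le C\|h\|_{W^{1,2}(\mathbb{S}^2)}^2$, which is \eqref{eq:relation-2}. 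Finally, \eqref{eq:z-xi-bound} is immediate by combining \eqref{eq:relation-1} with \eqref{eq:relation-2}.

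The argument is short and I do not expect a substantial obstacle; the only points requiring care are the identity $f_*(X)=X+\partial_X h$, the two divergence computations $\operatorname{div}_{\mathbb{S}^2}(Ax)=0$ and $\operatorname{div}_{\mathbb{S}^2}(\nabla_{\mathbb{S}^2}x^i)=-2x^i$, and the justification of the integration by parts at the available $W^{2,2}$ regularity. All of the quadratic error terms are of the explicit form $\int_{\mathbb{S}^2}|h|^2 x^i\,dx$, which is manifestly $O(\|h\|_{W^{1,2}(\mathbb{S}^2)}^2)$.
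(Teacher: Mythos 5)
Your proposal is correct and structurally the same as the paper's: both start from the orthogonality identities $\int_{\mathbb{S}^2}h\,dx=0$ and $\int_{\mathbb{S}^2}h\cdot f_*(X)\,dx=0$, use $f_*(X)=X+dh(X)$, and exploit that $Af_0$ and $\nabla x^i$ are tangential so that pairing with $h=v+zn$ only sees $v$. The one refinement you introduce is in handling the quadratic remainder $\int_{\mathbb{S}^2}h\cdot dh(X)\,dx$: you rewrite it as $\tfrac12\int_{\mathbb{S}^2}X(|h|^2)\,dx=-\tfrac12\int_{\mathbb{S}^2}|h|^2\operatorname{div}_{\mathbb{S}^2}X\,dx$ and use $\operatorname{div}(Af_0)=0$ (Killing) and $\operatorname{div}(\nabla x^i)=\Delta x^i=-2x^i$, whereas the paper simply bounds the integrand pointwise by $|h|\,|\nabla h|$ and applies Cauchy--Schwarz. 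Your version yields the slightly sharper conclusions $\int_{\mathbb{S}^2}v\cdot Af_0\,dx=0$ exactly and $\int_{\mathbb{S}^2}v\cdot\nabla x^i\,dx=-\int_{\mathbb{S}^2}|h|^2x^i\,dx$, both of which trivially imply the stated $O(\|h\|_{W^{1,2}}^2)$ bounds; the paper's cruder estimate gives $\|h\|_{L^2}\|\nabla h\|_{L^2}$ instead of $\|h\|_{L^2}^2$, but either suffices. The integration by parts is justified at the $W^{2,2}$ regularity level as you note, and the divergence computations are standard, so there is no gap.
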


\begin{proof}
Since $\int_{\mathbb{S}^2} h\,dx=0$ and $h=v+zn$ with $n=f_0$, we have
\[
\int_{\mathbb{S}^2} v\,dx + \int_{\mathbb{S}^2} z n\,dx = 0.
\]
Taking the Euclidean inner product with $e_i$ and using
$v\cdot e_i=v\cdot\nabla x^i$ and $n\cdot e_i=x^i$ gives
\[
\int_{\mathbb{S}^2} v\cdot\nabla x^i\,dx
 + \int_{\mathbb{S}^2} z\,x^i\,dx = 0,
\]
which is \eqref{eq:relation-1}.

For \eqref{eq:orth-rot}, fix $A\in\mathfrak{so}(3)$ and apply
\eqref{eq:orth-conf} with the conformal Killing field $X=Af_0$:
\[
0 = \int_{\mathbb{S}^2} h\cdot f_*(Af_0)\,dx.
\]
Since $f=f_0+h$ and $df=df_0+dh$, we have
\[
f_*(Af_0) = df(Af_0) = df_0(Af_0)+dh(Af_0) = Af_0 + dh(Af_0),
\]
because $df_0$ is the identity on $T\mathbb{S}^2$.  Thus
\[
0 = \int_{\mathbb{S}^2} h\cdot Af_0\,dx
    + \int_{\mathbb{S}^2} h\cdot dh(Af_0)\,dx.
\]
Decomposing $h=v+zn$ and using $n\cdot Af_0=0$ (since $Af_0$ is tangent),
we obtain
\[
\int_{\mathbb{S}^2} v\cdot Af_0\,dx
 = -\int_{\mathbb{S}^2} h\cdot dh(Af_0)\,dx.
\]
The pointwise bound $|dh(Af_0)|\le C|\nabla h|$ yields
\[
\bigl|\int_{\mathbb{S}^2} v\cdot Af_0\,dx\bigr|
 \le C\int_{\mathbb{S}^2} |h|\,|\nabla h|\,dx
 \le C\,\|h\|_{L^2(\mathbb{S}^2)}\,\|\nabla h\|_{L^2(\mathbb{S}^2)}
 \le C\,\|h\|_{W^{1,2}(\mathbb{S}^2)}^2.
\]
For \eqref{eq:relation-2}, we apply \eqref{eq:orth-conf} with
$X=\nabla x^i$:
\[
0 = \int_{\mathbb{S}^2} h\cdot f_*(\nabla x^i)\,dx
  = \int_{\mathbb{S}^2} h\cdot\bigl(\nabla x^i + dh(\nabla x^i)\bigr)\,dx.
\]
Since $h=v+zn$ and $n\cdot\nabla x^i=0$, we get
\[
\int_{\mathbb{S}^2} v\cdot\nabla x^i\,dx
 = -\int_{\mathbb{S}^2} h\cdot dh(\nabla x^i)\,dx.
\]
Using $|\nabla x^i|\le C$ and $|dh(\nabla x^i)|\le C|\nabla h|$,
\[
\bigl|\int_{\mathbb{S}^2} v\cdot\nabla x^i\,dx\bigr|
 \le C\int_{\mathbb{S}^2} |h|\,|\nabla h|\,dx
 \le C\,\|h\|_{L^2(\mathbb{S}^2)}\,\|\nabla h\|_{L^2(\mathbb{S}^2)}
 \le C\,\|h\|_{W^{1,2}(\mathbb{S}^2)}^2.
\]
Finally, combining \eqref{eq:relation-1} and \eqref{eq:relation-2} gives
\[
\bigl|\int_{\mathbb{S}^2} z\,x^i\,dx\bigr|
 = \bigl|\int_{\mathbb{S}^2} v\cdot\nabla x^i\,dx\bigr|
 \le C\,\|h\|_{W^{1,2}(\mathbb{S}^2)}^2,
\]
which is \eqref{eq:z-xi-bound}.
\end{proof}

We now recall the Cauchy--Riemann operator that encodes the trace-free
part of the conformality condition.
The conformality of $f$ reads
\[
(df_0 + dh)\otimes(df_0 + dh) = e^{2u} g_{\mathbb{S}^2},
\]
so, using $df_0\otimes df_0 = g_{\mathbb{S}^2}$, we obtain
\begin{equation}\label{eq:conf-relation}
\,df_0\otimes dh+dh\otimes df_0 + dh\otimes dh = (e^{2u}-1)\,g_{\mathbb{S}^2}.
\end{equation}
Taking the trace-free part of \eqref{eq:conf-relation}, we obtain
\[
\bigl(\,df_0\otimes dh+ dh\otimes df_0 + dh\otimes dh\bigr)_0 = 0,
\]
where by $(T)_0$ we mean the trace free part of a tensor $T$.
Let $(e_1,e_2)$ be a local $g_{\mathbb{S}^2}$-orthonormal frame, and write
\[
h_i := dh(e_i)\in\mathbb{R}^3,\qquad
v_{i,j}:=\langle \nabla_{e_j} v, e_i\rangle .
\]
Since $h=v+zn$, $df
_0\otimes ndz=f_0df_0\otimes dz=0$ and $dn(e_i)=e_i=df_0(e_i)$ on the unit sphere, the normal component $zn$
contributes only a pure-trace term to $(df_0\otimes dh)_0$, hence it drops out
after taking the trace-free part.  In these coordinates the trace-free
conformality conditions are the two scalar equations
\begin{equation}\label{eq:CR-components}
\begin{aligned}
v_{1,1}-v_{2,2} &= -\frac12\bigl(|h_1|^2-|h_2|^2\bigr),\\
v_{1,2}+v_{2,1} &= -\langle h_1,h_2\rangle .
\end{aligned}
\end{equation}
Equivalently, if we define the Cauchy--Riemann operator on vector fields by
\[
D v := \bigl(\mathcal{L}_vg_{\mathbb{S}^2}\bigr)_0 \in \Gamma(S^2_0T^*\mathbb{S}^2),
\qquad
(Dv)_{11}=v_{1,1}-v_{2,2},\ \ (Dv)_{12}=v_{1,2}+v_{2,1},
\]
then \eqref{eq:CR-components} can be written compactly as
\begin{equation}\label{eq:DvQ}
D v = Q(dh),
\end{equation}
where $Q(dh)$ is the trace-free quadratic form
\[
Q(dh):= -\,(dh\otimes dh)_0,
\quad\text{i.e.}\quad
Q_{11}=-\frac12\bigl(|h_1|^2-|h_2|^2\bigr),\ \ Q_{12}=-\langle h_1,h_2\rangle .
\]
In particular, $Q(dh)$ is quadratic in $dh$ and satisfies the pointwise bound
\begin{equation}\label{eq:Qbound}
|Q(dh)| \le C\,|dh|^2 \le C\,|\nabla h|^2 .
\end{equation}

If we view $\mathbb{S}^2$ as the Riemann sphere and identify
$T\mathbb{S}^2\simeq\mathcal{O}(2)$, then $D$ corresponds to the Dolbeault
operator
\[
\overline\partial : \Gamma(T^{1,0}\mathbb{S}^2)\longrightarrow
\Gamma(T^{1,0}\mathbb{S}^2\otimes\Lambda^{0,1}\mathbb{S}^2),
\]
which is surjective, and whose kernel is precisely the 6-dimensional space of conformal Killing fields. 

Standard elliptic theory for $D$
gives the following estimate.

\begin{lemma}[Elliptic estimate for the tangential part]\label{lem:v-Lp}
Let $1<p<\infty$.
There exists a constant $C_p<\infty$ such that the following holds:
if $v$ is the tangential part of $h=f-f_0$ as above, then
\begin{equation}\label{eq:v-W1p}
\|v\|_{W^{1,p}(\mathbb{S}^2)}
 \;\le\; C_p\Bigl(\|Q(dh)\|_{L^p(\mathbb{S}^2)}
                 + \|h\|_{W^{1,2}(\mathbb{S}^2)}^2\Bigr).
\end{equation}
In particular, using \eqref{eq:Qbound},
\begin{equation}\label{eq:v-Lp-quad}
\|v\|_{W^{1,p}(\mathbb{S}^2)}
 \;\le\; C_p\Bigl(\|\nabla h\|_{L^{2p}(\mathbb{S}^2)}^2
                 + \|h\|_{W^{1,2}(\mathbb{S}^2)}^2\Bigr).
\end{equation}
\end{lemma}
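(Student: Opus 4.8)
The plan is to combine standard elliptic theory for the Ahlfors (conformal Killing) operator $D$ with the approximate orthogonality relations of Lemma~\ref{lem:norm-conseq}. Let $K\subset\Gamma(T\mathbb{S}^2)$ denote the $6$-dimensional kernel of $D$, i.e.\ the conformal Killing fields, and let $P_K$ be the $L^2(\mathbb{S}^2)$-orthogonal projection onto $K$. I would first split
\[
v = P_Kv + v^\perp,\qquad v^\perp:=(I-P_K)v,
\]
so that $Dv^\perp=Dv=Q(dh)$ (since $D$ annihilates $K$), while $v^\perp$ lies in the $L^2$-orthogonal complement $K^\perp$. The estimate then separates into a complement part, to be controlled by elliptic regularity, and a kernel part, to be controlled by the normalization.

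For the complement part I would use that $D$ is a first-order elliptic operator on the closed surface $\mathbb{S}^2$ whose formal adjoint $D^{*}$ has trivial kernel --- equivalently, $\overline\partial$ on $T^{1,0}\mathbb{S}^2$ is surjective, as recalled above. Then $DD^{*}$ is an invertible second-order elliptic operator and $R:=D^{*}(DD^{*})^{-1}$ is a right inverse of $D$ with range in $K^{\perp}$; being a pseudodifferential operator of order $-1$ on $\mathbb{S}^2$, it is bounded $L^{p}\to W^{1,p}$ for every $1<p<\infty$ by Calder\'on--Zygmund theory. Since $v^\perp$ and $R\,Q(dh)$ both lie in $K^\perp$ and both map to $Q(dh)$ under $D$, they must coincide, hence
\[
\|v^\perp\|_{W^{1,p}(\mathbb{S}^2)}=\|R\,Q(dh)\|_{W^{1,p}(\mathbb{S}^2)}\le C_p\,\|Q(dh)\|_{L^p(\mathbb{S}^2)}.
\]
(Alternatively one can invoke the Agmon--Douglis--Nirenberg a priori estimate $\|w\|_{W^{1,p}}\le C_p(\|Dw\|_{L^p}+\|w\|_{L^p})$ and absorb the lower-order term on $K^{\perp}$ using the compact embedding $W^{1,p}\hookrightarrow L^p$ together with $\ker D=K$.)

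For the kernel part I would fix an $L^{2}$-orthonormal basis $X_1,\dots,X_6$ of $K$; since the $X_j$ are smooth and $K$ is finite-dimensional, all Sobolev norms are equivalent on $K$, giving $\|P_Kv\|_{W^{1,p}}\le C\|P_Kv\|_{L^2}$ with $\|P_Kv\|_{L^2}^2=\sum_{j=1}^{6}|\langle v,X_j\rangle_{L^2}|^2$. Because $K$ is spanned by the rotational fields $\{Af_0:A\in\mathfrak{so}(3)\}$ and the gradient fields $\{\nabla x^i:i=1,2,3\}$, each $X_j$ is a fixed linear combination of these, and --- since the Euclidean and $g_{\mathbb{S}^2}$ inner products agree on tangent vectors --- $\langle v,X_j\rangle_{L^2}$ is a fixed linear combination of the quantities $\int_{\mathbb{S}^2}v\cdot Af_0\,dx$ and $\int_{\mathbb{S}^2}v\cdot\nabla x^i\,dx$, which \eqref{eq:orth-rot} and \eqref{eq:relation-2} bound by $C\|h\|_{W^{1,2}}^2$. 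Hence $\|P_Kv\|_{W^{1,p}(\mathbb{S}^2)}\le C\|h\|_{W^{1,2}(\mathbb{S}^2)}^2$. Adding the two contributions yields \eqref{eq:v-W1p}, and \eqref{eq:v-Lp-quad} then follows by inserting the pointwise bound \eqref{eq:Qbound} and H\"older's inequality.

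The only step I expect to require genuine care is the identification of $\ker D$ with the conformal Killing fields and the matching of its six projection coefficients with the precise normalization quantities in Lemma~\ref{lem:norm-conseq} --- exactly the six-dimensional obstruction that the choice of M\"obius and translational gauge in Lemma~\ref{lem:partial-min} removes up to quadratic error; the rest is routine Calder\'on--Zygmund and elliptic machinery on a compact surface.
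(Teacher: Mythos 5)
Your proposal is correct and takes essentially the same route as the paper: decompose $v$ into its $L^2$-orthogonal projection onto $\ker D$ and its complement, control the complement by the elliptic estimate for $D$ (which you flesh out via the right inverse $D^*(DD^*)^{-1}$), and control the kernel part by the orthogonality bounds of Lemma~\ref{lem:norm-conseq} together with equivalence of norms on the finite-dimensional kernel. The extra detail you supply about $\overline\partial$-surjectivity and the Calder\'on--Zygmund boundedness of the parametrix is a welcome expansion of what the paper compresses into ``standard elliptic theory for $D$.''
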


\begin{proof}
Decompose $v$ into its component orthogonal to $\ker D$ and its projection
onto $\ker D$:
\[
v = v_0 + v_{\ker},\qquad v_0\perp\ker D,\quad v_{\ker}\in\ker D.
\]
On the orthogonal complement,using \eqref{eq:DvQ}, there is an elliptic estimate
\[
\|v_0\|_{W^{1,p}} \le C_p \|D v_0\|_{L^p}
 = C_p \|D v\|_{L^p}
 = C_p \|Q(dh)\|_{L^p}.
\]

Next, $v_{\ker}$ lies in the $6$-dimensional space of conformal Killing
fields, spanned by $A f_0$ and $\nabla x^i$, where $A\in\mathfrak{so}(3)$ . So $v_{\ker}$ is a linear combination of $A f_0$ and
$\nabla x^i$.  The coefficients  are controlled by \eqref{eq:orth-rot} and
\eqref{eq:relation-2} , which show that
\[
\Bigl|\int_{\mathbb{S}^2} v\cdot\nabla x^i\,dx\Bigr|
 + \Bigl|\int_{\mathbb{S}^2} v\cdot Af_0\,dx\Bigr|
 \le C\,\|h\|_{W^{1,2}(\mathbb{S}^2)}^2.
\]
Since the $\nabla x^i$ and $A_if_0$ form a fixed basis , this implies
\[
\|v_{\ker}\|_{W^{1,p}(\mathbb{S}^2)} \le C\,\|h\|_{W^{1,2}(\mathbb{S}^2)}^2.
\]

Combining the two parts, we obtain
\[
\|v\|_{W^{1,p}(\mathbb{S}^2)}
 \le C_p\Bigl(\|Q(dh)\|_{L^p(\mathbb{S}^2)}
              + \|h\|_{W^{1,2}(\mathbb{S}^2)}^2\Bigr),
\]
which is \eqref{eq:v-W1p}.

Finally, by \eqref{eq:Qbound} we have
\[
\|Q(dh)\|_{L^p(\mathbb{S}^2)} \le C\,\||\nabla h|^2\|_{L^p}
 = C\,\|\nabla h\|_{L^{2p}(\mathbb{S}^2)}^2,
\]
and substituting this into \eqref{eq:v-W1p} yields \eqref{eq:v-Lp-quad}.
\end{proof}

 We write
\[
E[h]
 := \int_{\mathbb{S}^2}\bigl(|\nabla(f_0+h)|^2-|\nabla f_0|^2\bigr)\,dx
\]
for the Dirichlet energy difference.


\begin{lemma}\label{lem:energy-identities-c}
Under the above assumptions, the following identities hold:
\begin{align}
E[h]
 &= 4\int_{\mathbb{S}^2} z\,dx
    + \int_{\mathbb{S}^2} |\nabla h|^2\,dx, \label{eq:E-basic-c}\\[0.4em]
E[h]
 &= -2\int_{\mathbb{S}^2}\Delta(f_0+h)\cdot h\,dx
    - \int_{\mathbb{S}^2}|\nabla h|^2\,dx,\label{eq:E-Delta-c}
\end{align}
\end{lemma}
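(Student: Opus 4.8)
The plan is to derive both identities by expanding the square in the definition of $E[h]$, integrating by parts on the closed manifold $\mathbb{S}^2$, and using the single structural fact that the coordinate functions on $\mathbb{S}^2\subset\mathbb{R}^3$ are first spherical harmonics, i.e.
\[
\Delta f_0=-2f_0=-2n,
\]
where $\Delta$ denotes the Laplace--Beltrami operator $\operatorname{div}\nabla$ of $g_{\mathbb{S}^2}$, acting componentwise on $\mathbb{R}^3$-valued maps. First I would record the pointwise expansion $|\nabla(f_0+h)|^2=|\nabla f_0|^2+2\,\nabla f_0\cdot\nabla h+|\nabla h|^2$, in which $\nabla f_0\cdot\nabla h$ denotes the natural contraction pairing the three $\mathbb{R}^3$-components and the $g_{\mathbb{S}^2}$-metric (the same convention already implicit in $|\nabla h|^2$). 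Subtracting $|\nabla f_0|^2$ and integrating gives the common starting point
\[
E[h]=2\int_{\mathbb{S}^2}\nabla f_0\cdot\nabla h\,dx+\int_{\mathbb{S}^2}|\nabla h|^2\,dx.
\]

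To obtain \eqref{eq:E-basic-c}, I would integrate the cross term by parts — there is no boundary contribution since $\mathbb{S}^2$ is closed — to get $2\int\nabla f_0\cdot\nabla h=-2\int\Delta f_0\cdot h$. Inserting $\Delta f_0=-2n$ and recalling that $z=h\cdot n$ is precisely the normal coefficient in $h=v+zn$ (with $|n|\equiv1$ on the unit sphere, so the coefficient coincides with the inner product), this becomes $-2\int\Delta f_0\cdot h=4\int_{\mathbb{S}^2}n\cdot h\,dx=4\int_{\mathbb{S}^2}z\,dx$, which is \eqref{eq:E-basic-c}.

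For \eqref{eq:E-Delta-c} I would instead symmetrize the cross term by writing $\nabla f_0=\nabla(f_0+h)-\nabla h$, so that $2\,\nabla f_0\cdot\nabla h=2\,\nabla(f_0+h)\cdot\nabla h-2|\nabla h|^2$ and hence
\[
E[h]=2\int_{\mathbb{S}^2}\nabla(f_0+h)\cdot\nabla h\,dx-\int_{\mathbb{S}^2}|\nabla h|^2\,dx.
\]
One further integration by parts, $\int\nabla(f_0+h)\cdot\nabla h=-\int\Delta(f_0+h)\cdot h$, then produces \eqref{eq:E-Delta-c}. As a consistency check one notes that \eqref{eq:E-Delta-c} also follows from \eqref{eq:E-basic-c} by integrating $\int\Delta h\cdot h$ by parts and again using $\Delta f_0=-2f_0$.

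The computation is essentially routine; the only point deserving a word is the legitimacy of the integrations by parts, which is guaranteed because $h\in W^{2,2}(\mathbb{S}^2;\mathbb{R}^3)$ — so that $\nabla h\in W^{1,2}$ and $\Delta h\in L^2$ — and $\mathbb{S}^2$ is boundaryless, so that no boundary terms arise. I do not anticipate any genuine obstacle in this lemma; its role is purely bookkeeping, isolating the linear term $4\int z$ and the quadratic remainder $\int|\nabla h|^2$ that will be analyzed via the spectral decomposition and the orthogonality relations of Lemmas~\ref{lem:norm-orth}--\ref{lem:norm-conseq}.
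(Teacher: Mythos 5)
Your proposal is correct and follows essentially the same route as the paper: expand the square, use $\Delta f_0=-2f_0$ together with integration by parts and $h\cdot f_0 = z$ for \eqref{eq:E-basic-c}, and rewrite $\nabla f_0 = \nabla(f_0+h)-\nabla h$ followed by one more integration by parts for \eqref{eq:E-Delta-c}. The consistency-check remark and the explicit justification of integration by parts on the closed manifold are correct additions but do not change the argument.
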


\begin{proof}
By definition,
\begin{align}\label{eq:E-basic-grad}
E[h]= \int_{\mathbb{S}^2}\bigl(|\nabla(f_0+h)|^2-|\nabla f_0|^2\bigr)\,dx= 2\int_{\mathbb{S}^2}\nabla f_0\cdot\nabla h\,dx
   + \int_{\mathbb{S}^2}|\nabla h|^2\,dx .
\end{align}
Noting $-\Delta f_0=2f_0$, we know
\begin{align*}
2\int_{\mathbb{S}^2}\nabla f_0\cdot\nabla h\,dx=-2\int_{\mathbb{S}^2}\Delta f_0\cdot hdx=4\int_{\mathbb{S}^2}f_0\cdot h dx=4\int_{\mathbb{S}^2}zdx.
\end{align*}
Substituting this into \eqref{eq:E-basic-grad} yields \eqref{eq:E-basic-c}.  Again by
\eqref{eq:E-basic-grad}, we know
\begin{align*}
E[h]=2\int_{\mathbb{S}^2}\nabla(f_0+h)\cdot \nabla h-\int_{\mathbb{S}^2}|\nabla h|^2dx=-2\int_{\mathbb{S}^2}\Delta(f_0+h)\cdot hdx-\int_{\mathbb{S}^2}|\nabla h|^2dx,
\end{align*}
which is \eqref{eq:E-Delta-c}.
\end{proof}

\begin{lemma}[Relation between the averages of $z$ and $z^2$]
\label{lem:z-vs-z2-volume}
Under the normalization and smallness assumption
$\delta:=\|h\|_{W^{2,2}(\mathbb{S}^2)}\ll1$, the scalar normal component
$z$ satisfies
\begin{equation}\label{eq:z-vs-z2-volume}
\biggl|\int_{\mathbb{S}^2} z\,dx + \int_{\mathbb{S}^2} z^2\,dx\biggr|
 \;\le\; C\,\delta^3.
\end{equation}
\end{lemma}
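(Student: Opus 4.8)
The plan is to read off \eqref{eq:z-vs-z2-volume} from the second--order expansion of the enclosed volume. Write $n:=f_0$ for the position vector (equivalently, the outward unit normal) of $\mathbb{S}^2$, fix a positively oriented local orthonormal frame $(e_1,e_2)$ of $(T\mathbb{S}^2,g_{\mathbb{S}^2})$, and recall the divergence--theorem identity
\[
3|\Omega|\;=\;\int_{\partial\Omega}\langle x,\nu\rangle\,d\mathcal{H}^2\;=\;\int_{\mathbb{S}^2}\bigl\langle f,\;df(e_1)\times df(e_2)\bigr\rangle\,d\mu_{g_{\mathbb{S}^2}},
\]
in which the Jacobian factor (the area element of the pulled--back metric) and the outward unit normal have been absorbed into the vector $df(e_1)\times df(e_2)$; the orientation is the one making the right--hand side equal to $4\pi$ when $f=f_0$, and since the $f$ at hand is an embedding of $\partial\Omega$ close to the standard one, this sign persists. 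Substituting $f=f_0+h=n+h$ and $df(e_i)=e_i+h_i$ with $h_i:=dh(e_i)$, and using $e_1\times e_2=n$, I would expand the integrand into pieces homogeneous of degree $0,1,2,3$ in $h$ and integrate each.

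For the algebra I would plug in the paper's decomposition $h=v+zn$ and the induced splitting $h_i=\tau_i+\nu_i n$ with $\tau_i:=D_{e_i}v+z\,e_i\in T\mathbb{S}^2$ (here $D$ is the Levi--Civita connection of $\mathbb{S}^2$) and $\nu_i:=e_i(z)-\langle v,e_i\rangle$, together with the frame identities $e_1\times n=-e_2$ and $n\times e_2=-e_1$. A direct computation then shows that the degree--$0$ term integrates to $\int_{\mathbb{S}^2}\langle n,n\rangle\,d\mu=4\pi$; the degree--$1$ integrand equals $3z+\operatorname{div}v$ (with $\operatorname{div}$ the divergence on $\mathbb{S}^2$), hence integrates to $3\int_{\mathbb{S}^2}z\,d\mu$; the degree--$2$ integrand equals $3z^2+R$, where $R$ is a finite sum of terms that are either quadratic in $\nabla v$ or of the form $z\operatorname{div}v$, $\langle\nabla z,v\rangle$, or $|v|^2$; and the degree--$3$ integrand is $\langle h,\,h_1\times h_2\rangle$, pointwise bounded by $|h|\,|\nabla h|^2$. (As a sanity check, for a radial graph $f=(1+\rho)f_0$ one has $v=0$, $z=\rho$, and the formula collapses to $3|\Omega|=\int_{\mathbb{S}^2}(1+\rho)^3$.) Collecting the contributions and using the normalization $|\Omega|=\tfrac{4\pi}{3}$ from \eqref{volume normalization}, one obtains
\[
0\;=\;3\int_{\mathbb{S}^2}z\,d\mu+3\int_{\mathbb{S}^2}z^2\,d\mu+\int_{\mathbb{S}^2}R\,d\mu+\int_{\mathbb{S}^2}\langle h,h_1\times h_2\rangle\,d\mu,
\]
so it remains to bound the last two integrals by $C\delta^3$.

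The cubic term is immediate: by $W^{2,2}(\mathbb{S}^2)\hookrightarrow C^0(\mathbb{S}^2)$,
\[
\Bigl|\int_{\mathbb{S}^2}\langle h,h_1\times h_2\rangle\,d\mu\Bigr|\le\|h\|_{L^\infty}\,\|\nabla h\|_{L^2}^2\le C\delta^3.
\]
For $R$, the key input is that the tangential part $v$ is \emph{quadratically} small: by Lemma~\ref{lem:v-Lp} in the form \eqref{eq:v-Lp-quad} (applied with $p=2$ and with a $p$ slightly larger than $2$), combined with the Sobolev embeddings $W^{2,2}(\mathbb{S}^2)\hookrightarrow W^{1,q}(\mathbb{S}^2)$ and $W^{1,q}(\mathbb{S}^2)\hookrightarrow C^0(\mathbb{S}^2)$ for $q>2$, one gets $\|v\|_{W^{1,2}(\mathbb{S}^2)}+\|v\|_{L^\infty(\mathbb{S}^2)}\le C\delta^2$. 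Using this together with $\|\nabla z\|_{L^2}\le\|h\|_{W^{1,2}}\le\delta$, Cauchy--Schwarz, and the integration by parts $\int z\operatorname{div}v=-\int\langle\nabla z,v\rangle$, each term of $R$ integrates to $O(\delta^3)$: the terms quadratic in $\nabla v$ by $\|\nabla v\|_{L^2}^2\le C\delta^4$; the terms involving $\langle\nabla z,v\rangle$ by $\|\nabla z\|_{L^2}\|v\|_{L^2}\le C\delta^3$; and $\int|v|^2\le C\delta^4$. Hence $\int_{\mathbb{S}^2}R\,d\mu=O(\delta^3)$, which gives $\bigl|\int_{\mathbb{S}^2}z+\int_{\mathbb{S}^2}z^2\bigr|\le C\delta^3$, i.e.\ \eqref{eq:z-vs-z2-volume}.

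I expect the main obstacle to be the bookkeeping in the degree--$2$ expansion: one must verify that \emph{every} contribution of the tangential field $v$ to the quadratic part of the volume survives, after integration (and the divergence identity for the degree--$1$ term), only as a higher--order term, which is exactly where the quadratic bound $\|v\|_{W^{1,2}}\lesssim\delta^2$ of Lemma~\ref{lem:v-Lp} is indispensable; the a priori linear bound $\|v\|_{W^{1,2}}\lesssim\delta$ would leave an uncontrolled $O(\delta^2)$ error. A minor, purely cosmetic point is fixing the correct sign and orientation in the volume formula, which is settled by continuity from $f=f_0$.
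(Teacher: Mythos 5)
Your proposal is correct and follows essentially the same route as the paper: the divergence--theorem identity $3|\Omega|=\int_{\mathbb{S}^2}\langle f,\,f_1\times f_2\rangle$, the subtraction of the corresponding identity for $f_0$, the expansion in degree in $h=v+zn$ yielding the pointwise leading term $3z+3z^2+\operatorname{div}v$, and the decisive use of the quadratic smallness $\|v\|_{W^{1,2}}\lesssim\delta^2$ from Lemma~\ref{lem:v-Lp} to kill the remainder. Your enumeration of the degree--$2$ remainder $R$ (as a sum of specific forms like $z\operatorname{div}v$, $\langle\nabla z,v\rangle$, $|v|^2$) is a bit more restrictive than what the triple--product expansion actually produces; the paper instead records only the coarser pointwise bound $|R|\lesssim(|v|+|\nabla v|)(|h|+|\nabla h|)+|h|\,|\nabla h|^2$, which is what is needed and which your Cauchy--Schwarz estimates effectively reproduce, so the discrepancy is cosmetic.
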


\begin{proof}
By the divergence theorem,
\[
\int_{\Sigma} f\cdot n_f\,d\mu_f = 3|\Omega|,\qquad
\int_{\mathbb{S}^2} f_0\cdot n_0\,d\mu_{f_0} = 3|B_1|,
\]
and by our volume normalization $|\Omega|=|B_1|$ these integrals coincide.
In local conformal coordinates we have
\[
f\cdot(f_1\times f_2)\,dy = f\cdot n_f\,d\mu_f,\qquad
f_0\cdot (f_0)_1\times (f_0)_2\,dy = f_0\cdot n_0\,d\mu_{f_0},
\]
so
\begin{equation}\label{eq:volume-diff-zero}
\int_{\mathbb{S}^2}\bigl(f\cdot f_1\times f_2
      - f_0\cdot (f_0)_1\times (f_0)_2\bigr)\,dx = 0.
\end{equation}

Fix a point and choose an oriented orthonormal frame
$(e_1,e_2,n)$ in $\mathbb{R}^3$ with
\[
e_1=(f_0)_1,\quad e_2=(f_0)_2,\quad n=e_1\times e_2=f_0.
\]
Write $h=v+zn$ with $v\perp n$, and denote $f_i:=\partial_i f$,
$h_i:=\partial_i h$, etc.  Then
\[
\begin{aligned}
f\cdot f_1\times f_2 - f_0\cdot (f_0)_1\times (f_0)_2
&= (f_0+h)\cdot(f_0+h)_1\times(f_0+h)_2 - n\cdot e_1\times e_2\\
&= h\cdot e_1\times e_2
   + n\cdot h_1\times e_2
   + n\cdot e_1\times h_2\\
&\quad + h\cdot h_1\times e_2
       + h\cdot e_1\times h_2
       + n\cdot h_1\times h_2
       + h\cdot h_1\times h_2.
\end{aligned}
\]
We analyse the terms one by one in the frame $(e_1,e_2,n)$.

Firstly, since $h=v+zn$ and $e_1\times e_2=n$, we have
  \[
  h\cdot e_1\times e_2 = h\cdot n = z.
  \]
For the other terms, we use $h=v+zn$ and the identities
  \[
  n\times e_1 = e_2,\qquad e_2\times n = e_1.
  \]
  For the first order term with derivative of $h$, the above identity implies $$n\cdot h_1\times e_2=h_1\cdot e_2\times n=h_1\cdot e_1 \quad \text { and } \quad h_2\cdot n\times e_1=h_2\cdot e_2.$$  Noting $\nabla z\cdot n=0$, this further implies
  \[
  n\cdot h_1\times e_2 + n\cdot e_1\times h_2 =\mathrm{div}h= \mathrm{div}\,v+2z.
  \]

  For the quadratic and cubic terms, expanding the cross products gives
  \[
  \begin{aligned}
  h\times h_1
  &= (v+zn)\times h_1
   = v\times h_1 + zn\times(v_1+z_1n+ze_1)
   = z^2 e_2 + R_1,\\
  h_2\times h
  &= h_2\times(v+zn)
   = h_2\times v + (v_2+z_2 n+ze_2)\times zn
   = z^2 e_1 + R_2,\\
  h_1\times h_2
  &= v_1\times h_2 + (z_1n+ze_1)\times(v_2+z_2n+ze_2)
   = -z\nabla z + z^2 n + R_3,
  \end{aligned}
  \]
  where the remainders are
  \begin{align*}
  R_1=v\times h_1+zn\times v_1\quad \quad R_2=h_2\times v+z\cdot v_2\times n\quad \text{ and } \quad R_3=v_1\times h_2+(z_1n+ze_1)\times v_2.
  \end{align*}
  where the remainders satisfy the pointwise bounds
  \[
  |R_1|+|R_2|+|R_3|
   \;\le\; C\bigl(|v|\,|\nabla h|+|\nabla v|(|h|+|\nabla h|)\bigr).
  \]
  Consequently,
  \[
  \begin{aligned}
  I &:= h\cdot h_1\times e_2
      = e_2\cdot(z^2 e_2 + R_1) = z^2 + O(|R_1|),\\
  II &:= h\cdot e_1\times h_2
      = e_1\cdot(z^2 e_1 + R_2) = z^2 + O(|R_2|),\\
  III &:= n\cdot h_1\times h_2
       = n\cdot(-z\nabla z + z^2 n + R_3)
       = z^2 + O(|R_3|),
  \end{aligned}
  \]
  and the fully cubic term $h\cdot h_1\times h_2$ is $O(|h|\,|\nabla h|^2)$.

Putting everything together, we obtain the pointwise expansion
\begin{equation}\label{eq:volume-expansion-pointwise}
f\cdot f_1\times f_2 - f_0\cdot (f_0)_1\times (f_0)_2
 = 3z + 3z^2 + \mathrm{div}\,v + R,
\end{equation}
where
\[
|R|\;\le\;C\bigl((|v|+|\nabla v|)(|h|+|\nabla h|) + |h|\,|\nabla h|^2\bigr).
\]

Integrating \eqref{eq:volume-expansion-pointwise} over $\mathbb{S}^2$ and
using \eqref{eq:volume-diff-zero} and $\int_{\mathbb{S}^2}\mathrm{div}\,v=0$,
we obtain
\[
\int_{\mathbb{S}^2} z\,dx + \int_{\mathbb{S}^2} z^2\,dx
 = -\frac{1}{3}\int_{\mathbb{S}^2} R\,dx.
\]
By Lemma~\ref{lem:v-Lp} (with $p=2$) we have
$\|v\|_{W^{1,2}}\le C\delta^2$, while $\|h\|_{L^\infty}$ and
$\|\nabla h\|_{L^2}$ are $O(\delta)$ by Sobolev embedding.  Thus
\[
\int_{\mathbb{S}^2}|R|
 \le C\int_{\mathbb{S}^2}\bigl((|v|+|\nabla v|)(|h|+|\nabla h|) + |h|\,|\nabla h|^2\bigr)
 \le C\delta^2\delta + C\delta\delta^2
 \le C\delta^3.
\]
This yields \eqref{eq:z-vs-z2-volume}.
\end{proof}

\begin{lemma}[Refined expansion of $|\nabla h|^2$]\label{lem:grad-expansion-c}
On the unit sphere $\mathbb{S}^2$, with $h=v+zn$, $n=f_0$ and $v\perp n$, we
have the pointwise identity
\begin{align}
|\nabla(zn)|^2
 &= |\nabla z|^2 + 2 z^2. \label{eq:grad-zn-pointwise-c}
\end{align}
In particular, there exists $C<\infty$ such that
\begin{equation}\label{eq:grad-h-error-W22-c}
\biggl|\int_{\mathbb{S}^2}|\nabla h|^2\,dx
       - \int_{\mathbb{S}^2}\bigl(|\nabla z|^2+2z^2\bigr)\,dx\biggr|
\;\le\;
C\,\delta^3.
\end{equation}
\end{lemma}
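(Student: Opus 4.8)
The plan is to treat the two assertions separately: first establish the pointwise identity \eqref{eq:grad-zn-pointwise-c} by a direct Weingarten computation, then deduce the integral estimate \eqref{eq:grad-h-error-W22-c} by expanding $|\nabla h|^2$ and absorbing every mixed and tangential contribution into the $O(\delta^3)$ error via the quadratic smallness of $v$. For \eqref{eq:grad-zn-pointwise-c} I would fix a point of $\mathbb{S}^2$, choose a local $g_{\mathbb{S}^2}$-orthonormal frame $(e_1,e_2)$, and note that $(e_1,e_2,n)$ is orthonormal in $\mathbb{R}^3$ along $\mathbb{S}^2$ since $n=f_0$ is the position vector. Because the Weingarten map of the unit sphere is the identity — equivalently $\partial_i n=df_0(e_i)=e_i$, the relation already used above — one has $\partial_i(zn)=(\partial_i z)\,n+z\,e_i$, so taking the Euclidean square and using $|n|=|e_i|=1$ and $n\perp e_i$ gives $|\partial_i(zn)|^2=(\partial_i z)^2+z^2$; summing over $i=1,2$ yields \eqref{eq:grad-zn-pointwise-c}.

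Next I would expand $|\nabla h|^2=|\nabla v|^2+2\langle\nabla v,\nabla(zn)\rangle+|\nabla(zn)|^2$, integrate over $\mathbb{S}^2$, and substitute \eqref{eq:grad-zn-pointwise-c} into the last term, which reduces \eqref{eq:grad-h-error-W22-c} to the claim that $\int_{\mathbb{S}^2}|\nabla v|^2\,dx$ and $\int_{\mathbb{S}^2}\langle\nabla v,\nabla(zn)\rangle\,dx$ are both $O(\delta^3)$. To see this I would invoke Lemma~\ref{lem:v-Lp} with $p=2$ together with the Sobolev embedding $W^{2,2}(\mathbb{S}^2)\hookrightarrow W^{1,4}(\mathbb{S}^2)$, which gives the crucial quadratic bound $\|v\|_{W^{1,2}(\mathbb{S}^2)}\le C\bigl(\|\nabla h\|_{L^4(\mathbb{S}^2)}^2+\|h\|_{W^{1,2}(\mathbb{S}^2)}^2\bigr)\le C\delta^2$. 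Since $z=h\cdot n$ satisfies $|z|\le|h|$ and $|\nabla z|\le|\nabla h|+C|h|$, identity \eqref{eq:grad-zn-pointwise-c} also gives $\|\nabla(zn)\|_{L^2(\mathbb{S}^2)}\le C\|h\|_{W^{1,2}(\mathbb{S}^2)}\le C\delta$. Hence $\int_{\mathbb{S}^2}|\nabla v|^2\le C\delta^4\le C\delta^3$ and, by Cauchy--Schwarz, $\bigl|\int_{\mathbb{S}^2}\langle\nabla v,\nabla(zn)\rangle\bigr|\le\|\nabla v\|_{L^2}\|\nabla(zn)\|_{L^2}\le C\delta^3$, using $\delta\ll1$; combining the two bounds proves \eqref{eq:grad-h-error-W22-c}.

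I do not expect a genuine obstacle here. The only substantive input is the quadratic smallness $\|v\|_{W^{1,2}}\le C\delta^2$, which is built into the conformality constraint $Dv=Q(dh)$ and extracted by the elliptic estimate of Lemma~\ref{lem:v-Lp}; everything else is the pointwise Weingarten identity on $\mathbb{S}^2$ and Cauchy--Schwarz. The single place requiring mild care is keeping the lower-order term $C|h|$ in $|\nabla z|\le|\nabla h|+C|h|$, which appears precisely because $n=f_0$ is not parallel, but it is harmless since it is controlled by $\|h\|_{W^{1,2}}$.
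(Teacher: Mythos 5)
Your proof is correct and follows essentially the same route as the paper: the pointwise identity via $\partial_i(zn)=(\partial_i z)n+ze_i$ on the unit sphere, and then absorbing $\int|\nabla v|^2$ and the mixed term $2\int\langle\nabla v,\nabla(zn)\rangle$ into $O(\delta^3)$ using the quadratic bound $\|v\|_{W^{1,2}}\le C\delta^2$ from Lemma~\ref{lem:v-Lp} together with $\|\nabla(zn)\|_{L^2}\le C\delta$ and Cauchy--Schwarz. The only cosmetic difference is that the paper packages the mixed term as $O(|\nabla v|(|h|+|\nabla h|))$ before integrating, while you integrate first and apply Cauchy--Schwarz; the inputs and conclusion are identical. (A minor simplification: $\|\nabla(zn)\|_{L^2}\le\|\nabla h\|_{L^2}+\|\nabla v\|_{L^2}\le C\delta$ is immediate from $\nabla(zn)=\nabla h-\nabla v$, avoiding the separate bound on $|\nabla z|$.)
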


\begin{proof}
The pointwise identity \eqref{eq:grad-zn-pointwise-c}  follows from the decomposition
$$\nabla(zn)=(\nabla z)n+z\nabla n$$ and the relations $\nabla z\cdot n=0$ together with $|\nabla n|^2=2$
on $\mathbb{S}^2$.
For the integral estimate\eqref{eq:grad-h-error-W22-c}, by the decomposition
\[
\nabla h = \nabla v + \nabla (zn)
\]
and the relations $\nabla_a v\cdot n = -(v,a)$,  we rewrite
\[
|\nabla h|^2 - (|\nabla z|^2+2z^2)
 = |\nabla v|^2+2\nabla v\cdot \nabla(zn)=|\nabla v|^2+ O(|\nabla v|(|h|+|\nabla h|))
\]
Using the bounds on $v$ from Lemma~\ref{lem:v-Lp} (with $p=2$) and the
smallness of $h$, we obtain
$\|v\|_{W^{1,2}}\lesssim\|\nabla h\|_{L^4}^2+\|h\|_{W^{1,2}}^2\lesssim\delta^2$, hence all
terms involving $v$ contribute at most $C\delta^3$ after integration, which
gives \eqref{eq:grad-h-error-W22-c}.
\end{proof}

\begin{lemma}[Quadratic expansion with parameter]\label{lem:quad-form-c}
Under the normalization and smallness assumption
$\delta:=\|h\|_{W^{2,2}(\mathbb{S}^2)}\ll1$, there exists $C<\infty$,
depending on $c$, such that
\begin{equation}\label{eq:final-quad-form-c}
\begin{split}
E[h]
&= -2\int_{\mathbb{S}^2}\Bigl[\Delta(f_0+h)
     + c(f_1\times f_2)\Bigr]\cdot h\,dx\\
&\quad - \int_{\mathbb{S}^2}|\nabla z|^2\,dx
       + 2(c-1)\int_{\mathbb{S}^2}z^2\,dx
       + \widetilde{R}_c[h],
\end{split}
\end{equation}
where the error satisfies
\begin{equation}\label{eq:Rc-error-bound}
|\widetilde{R}_c[h]|\;\le\; C\,\delta^3.
\end{equation}
\end{lemma}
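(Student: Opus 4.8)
\noindent\emph{Sketch of proof.}
The plan is to start from the second energy identity \eqref{eq:E-Delta-c} of Lemma~\ref{lem:energy-identities-c},
\[
E[h] = -2\int_{\mathbb{S}^2}\Delta(f_0+h)\cdot h\,dx - \int_{\mathbb{S}^2}|\nabla h|^2\,dx ,
\]
and to manufacture the desired second--order operator by adding and subtracting the curvature term. Writing $f_i=\partial_i f$, I would rewrite this as
\[
E[h] = -2\int_{\mathbb{S}^2}\bigl[\Delta(f_0+h)+c\,(f_1\times f_2)\bigr]\cdot h\,dx
 \;+\; 2c\int_{\mathbb{S}^2}(f_1\times f_2)\cdot h\,dx \;-\; \int_{\mathbb{S}^2}|\nabla h|^2\,dx ,
\]
so that it only remains to evaluate the last two integrals modulo $O(\delta^3)$; after that, \eqref{eq:final-quad-form-c} follows by collecting the resulting $z^2$--terms.

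For the curvature integral, I would expand $h\cdot(f_1\times f_2)$ pointwise in the adapted orthonormal frame $(e_1,e_2,n)=((f_0)_1,(f_0)_2,f_0)$ already used in Lemma~\ref{lem:z-vs-z2-volume}. Since $f_i=e_i+h_i$, one has $f_1\times f_2 = n + (e_1\times h_2+h_1\times e_2) + h_1\times h_2$, hence
\[
h\cdot(f_1\times f_2) = z \;+\; \bigl(h\cdot(e_1\times h_2)+h\cdot(h_1\times e_2)\bigr) \;+\; h\cdot(h_1\times h_2).
\]
By the cyclic property of the scalar triple product the two mixed terms equal $e_1\cdot(h_2\times h)$ and $e_2\cdot(h\times h_1)$, which are exactly the quantities $II$ and $I$ computed in the proof of Lemma~\ref{lem:z-vs-z2-volume}: each equals $z^2$ plus a remainder pointwise bounded by $|v|\,|\nabla h|+|\nabla v|(|h|+|\nabla h|)$, while the fully cubic term $h\cdot(h_1\times h_2)$ is $O(|h|\,|\nabla h|^2)$. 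Integrating and using $\|v\|_{W^{1,2}}\lesssim\delta^2$ from Lemma~\ref{lem:v-Lp} together with $\|h\|_{L^\infty},\|\nabla h\|_{L^2}=O(\delta)$ (Sobolev embedding), all remainders are $O(\delta^3)$, giving $\int_{\mathbb{S}^2}(f_1\times f_2)\cdot h\,dx = \int z\,dx + 2\int z^2\,dx + O(\delta^3)$. Invoking Lemma~\ref{lem:z-vs-z2-volume} to replace $\int z\,dx$ by $-\int z^2\,dx + O(\delta^3)$ then yields $\int_{\mathbb{S}^2}(f_1\times f_2)\cdot h\,dx = \int z^2\,dx + O(\delta^3)$.

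Finally, Lemma~\ref{lem:grad-expansion-c} supplies $\int|\nabla h|^2\,dx = \int(|\nabla z|^2+2z^2)\,dx + O(\delta^3)$. Substituting both expansions into the displayed formula for $E[h]$ yields
\[
E[h] = -2\int_{\mathbb{S}^2}\bigl[\Delta(f_0+h)+c(f_1\times f_2)\bigr]\cdot h\,dx
 + 2c\int_{\mathbb{S}^2}z^2\,dx - \int_{\mathbb{S}^2}|\nabla z|^2\,dx - 2\int_{\mathbb{S}^2}z^2\,dx + \widetilde R_c[h],
\]
which is exactly \eqref{eq:final-quad-form-c} once the two $z^2$--integrals are combined into $2(c-1)\int z^2\,dx$; here $\widetilde R_c[h]$ is the sum of the three $O(\delta^3)$ errors, and the constant in \eqref{eq:Rc-error-bound} depends on $c$ through the prefactor $2c$. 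I expect the main (and essentially only) obstacle to be the bookkeeping in the previous paragraph: one must check that every remainder produced by the cross--product expansion is at worst bilinear in $(v,\nabla v)$ and $(h,\nabla h)$, or cubic in $(h,\nabla h)$, so that it is the \emph{quadratic} gain $\|v\|_{W^{1,2}}\lesssim\delta^2$ --- not merely the $O(\delta)$ bound --- that promotes these terms to $O(\delta^3)$. This is the same mechanism already exploited in Lemmas~\ref{lem:z-vs-z2-volume} and~\ref{lem:grad-expansion-c}, so no genuinely new ingredient is needed.
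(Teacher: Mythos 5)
Your proposal is correct and follows essentially the same route as the paper: start from the second identity of Lemma~\ref{lem:energy-identities-c}, add and subtract $c\,f_1\times f_2$, expand $\int(f_1\times f_2)\cdot h$ using the cross--product computations already carried out in Lemma~\ref{lem:z-vs-z2-volume}, invoke Lemma~\ref{lem:z-vs-z2-volume} to trade $\int z$ for $-\int z^2+O(\delta^3)$, and close with Lemma~\ref{lem:grad-expansion-c}. The only (cosmetic) difference is in how $\int(f_1\times f_2)\cdot h$ is handled: the paper first invokes the volume constraint \eqref{eq:volume-diff-zero} to rewrite this integral as $\int(e_1\times e_2-f_1\times f_2)\cdot f_0$, obtaining $-\int(\mathrm{div}\,v+2z+z^2)\,dx+O(\delta^3)$ and then applying Lemma~\ref{lem:z-vs-z2-volume}, whereas you expand $h\cdot(f_1\times f_2)$ directly to get $\int z+2\int z^2+O(\delta^3)$ and apply Lemma~\ref{lem:z-vs-z2-volume} once; both yield $\int z^2+O(\delta^3)$ and neither is preferable in any substantive way.
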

\begin{proof}
It remains to treat the last term
\[
2c\int_{\mathbb{S}^2}(f_1\times f_2)\cdot (f-f_0)\,dx
 \;=\; 2c\int_{\mathbb{S}^2}(f_1\times f_2)\cdot h\,dx .
\]
We estimate it exactly as in Lemma~\ref{lem:z-vs-z2-volume}.  Using the volume
constraint \eqref{eq:volume-diff-zero}, we may rewrite
\[
\int_{\mathbb{S}^2}(f_1\times f_2)\cdot h\,dx
= \int_{\mathbb{S}^2}(e_1\times e_2 - f_1\times f_2)\cdot f_0\,dx .
\]
Expanding $f=f_0+h$ (hence $f_i=e_i+h_i$) yields
\[
e_1\times e_2 - f_1\times f_2
= -(e_1\times h_2 + h_1\times e_2 + h_1\times h_2),
\]
and therefore
\[
2c\int_{\mathbb{S}^2}(f_1\times f_2)\cdot h\,dx
= -2c\int_{\mathbb{S}^2}(e_1\times h_2 + h_1\times e_2 + h_1\times h_2)\cdot f_0\,dx .
\]
By the computation in Lemma~\ref{lem:z-vs-z2-volume}, we get
$$(e_1\times h_2 + h_1\times e_2 + h_1\times h_2)\cdot f_0=\mathrm{div}v+2z+z^2+R',$$
where $R'=O(|\nabla v|(|h|+|\nabla h|))$.
After integration and applying Lemma \ref{lem:z-vs-z2-volume}, we get
\begin{align*}
2c\int_{\mathbb{S}^2}(f_1\times f_2)\cdot h\,dx
= 2c\int_{\mathbb{S}^2} z^2\,dx + O(\delta^3).
\end{align*}
Substituting  this into
\begin{equation}\label{eq:linear-comb-c-compact}
\begin{split}
E[h]
&= -2\int_{\mathbb{S}^2}\Delta(f_0+h)\cdot h\,dx
   - \int_{\mathbb{S}^2}|\nabla h|^2\,dx\\
&= -2\int_{\mathbb{S}^2}\Bigl[\Delta(f_0+h)
     + cf_1\times f_2\Bigr]\cdot h\,dx - \int_{\mathbb{S}^2}|\nabla h|^2\,dx
      + 2c\int_{\mathbb{S}^2}(f_1\times f_2)\cdot h\,dx,
\end{split}
\end{equation}
then applying  Lemma \ref{lem:grad-expansion-c} gives
\eqref{eq:final-quad-form-c} with remainder estimate
\eqref{eq:Rc-error-bound}.
\end{proof}

\begin{proposition}
\label{prop:alexandrov-cmc-control}
Let $c>0$ and assume the normalization, volume constraint, and smallness
assumption $\delta:=\|h\|_{W^{2,2}(\mathbb{S}^2)}\ll1$ as above. Then for any $c<4$, there
exist $C<\infty$ with the following property.

Then
\begin{equation}\label{eq:E-controlled-by-Hc}
E[h]
 \;\le\; C\Bigl(\|H-c\|_{L^2(\Sigma)}^2 + \delta^3\Bigr),
\end{equation}
where $H$ denotes the scalar mean curvature of $\Sigma=f(\mathbb{S}^2)$
(with respect to the unit normal $n+\nu$) and the $L^2$-norm is taken with
respect to $d\mu_f$.
\end{proposition}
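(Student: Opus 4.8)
The plan is to combine the two expansions of the Dirichlet energy difference $E[h]$ already available — the extrinsic one from Lemma~\ref{lem:quad-form-c} and the intrinsic one coming from the energy identities — so that $E[h]$ itself cancels and the surviving linear term is exhibited as a pairing against the CMC defect $H-c$; coercivity then comes from the spectral gap of $-\Delta$ on $\mathbb{S}^2$, and a standard absorption upgrades a product bound to the desired quadratic estimate.

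First I would interpret the bracket $\Delta(f_0+h)+c(f_1\times f_2)$ appearing in \eqref{eq:final-quad-form-c}. Since $f=f_0+h$ is a conformal immersion with $df\otimes df=e^{2u}g_{\mathbb{S}^2}$, the Laplace--Beltrami operator of the induced metric is $e^{-2u}\Delta$, so the relation $\Delta_{g_f}f=-\vec H$ between the intrinsic Laplacian and the mean curvature vector (in the sign convention of the paper, for which the round sphere has $-\Delta f_0=2f_0$ and $H=2$) gives $\Delta f=-e^{2u}H\,N$, where $N=n+\nu$ is the unit normal and $H$ the associated scalar mean curvature. Likewise $f_1\times f_2=e^{2u}N$ in the chosen orthonormal frame. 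Hence $\Delta(f_0+h)+c(f_1\times f_2)=e^{2u}(c-H)\,N$, and therefore
\[
-2\int_{\mathbb{S}^2}\bigl[\Delta(f_0+h)+c(f_1\times f_2)\bigr]\cdot h\,dx
 =2\int_{\Sigma}(H-c)\,(N\cdot h)\,d\mu_f .
\]
Substituting this into \eqref{eq:final-quad-form-c} and equating with the intrinsic expansion $E[h]=\int_{\mathbb{S}^2}(|\nabla z|^2-2z^2)\,dx+O(\delta^3)$ — which follows by combining \eqref{eq:E-basic-c}, Lemma~\ref{lem:z-vs-z2-volume} and \eqref{eq:grad-h-error-W22-c} — the terms $E[h]$ drop out and I obtain the linearized Alexandrov identity
\[
\int_{\mathbb{S}^2}\bigl(|\nabla z|^2-c\,z^2\bigr)\,dx
 =\int_{\Sigma}(H-c)\,(N\cdot h)\,d\mu_f+O(\delta^3).
\]

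Next I would extract coercivity from the left-hand side. By \eqref{eq:z-xi-bound} and Lemma~\ref{lem:z-vs-z2-volume}, the projections of $z$ onto the $\ell=0,1$ eigenspaces of $-\Delta$ (eigenvalues $0$ and $2$) have $L^2$-norm $O(\delta^2)$; writing $z=z_{\le1}+z^\perp$ accordingly and using the spectral gap $\int|\nabla z^\perp|^2\ge 6\int(z^\perp)^2$ together with $c<4$, I get $\int(|\nabla z|^2-c\,z^2)\ge(1-c/6)\int|\nabla z^\perp|^2-C\delta^3\ge\tfrac13\int|\nabla z^\perp|^2-C\delta^3$. On the normal side, $N\cdot h=z+r$ with $\|r\|_{L^2}\le C\delta^2$, because the tangential part obeys $\|v\|_{L^2}\le C\delta^2$ by Lemma~\ref{lem:v-Lp} and $|\nu|\le C|\nabla h|$; hence $\|N\cdot h\|_{L^2(d\mu_f)}\le C\|z^\perp\|_{L^2}+C\delta^2\le C\|\nabla z^\perp\|_{L^2}+C\delta^2$. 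Feeding both estimates into the identity and applying Cauchy--Schwarz yields
\[
\tfrac13\int_{\mathbb{S}^2}|\nabla z^\perp|^2
 \le C\,\|H-c\|_{L^2(\Sigma)}\bigl(\|\nabla z^\perp\|_{L^2}+\delta^2\bigr)+C\delta^3,
\]
and Young's inequality (absorbing $\|\nabla z^\perp\|_{L^2}$ into the left side and using $\delta^4\le\delta^3$) gives $\int|\nabla z^\perp|^2\le C(\|H-c\|_{L^2(\Sigma)}^2+\delta^3)$. Returning to the intrinsic expansion and discarding the nonpositive term $-2\int z^2$, one has $E[h]\le\int|\nabla z|^2+C\delta^3\le\int|\nabla z^\perp|^2+C\delta^3\le C(\|H-c\|_{L^2(\Sigma)}^2+\delta^3)$, which is \eqref{eq:E-controlled-by-Hc}.

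The genuinely geometric — and main — step is the identification $\Delta(f_0+h)+c(f_1\times f_2)=e^{2u}(c-H)\,N$: this is what converts the Euler--Lagrange-type bracket into the scalar CMC defect, and it is the only place where the mean curvature of $\Sigma$ enters; everything after it is spectral theory on the round sphere together with the cubic bookkeeping already handled in Lemmas~\ref{lem:z-vs-z2-volume}--\ref{lem:grad-expansion-c}. A secondary point deserving care is that all error terms involving the tangential component $v$ are only \emph{quadratically} small in $\delta$ (through Lemma~\ref{lem:v-Lp}), which is precisely what makes the product bound for the cross term $\int_\Sigma(H-c)(N\cdot h)\,d\mu_f$ absorbable and produces the quadratic dependence on $\|H-c\|_{L^2(\Sigma)}$ rather than merely a linear one.
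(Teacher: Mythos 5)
Your proof is correct and rests on the same core ingredients as the paper's argument: the pointwise identification $\Delta(f_0+h)+c(f_1\times f_2)=(c-H)(f_1\times f_2)$, the quadratic expansion of Lemma~\ref{lem:quad-form-c}, the spectral gap of $-\Delta$ on $\mathbb{S}^2$ combined with the almost-orthogonality of $z$ to the $\ell\le1$ modes (from Lemma~\ref{lem:norm-conseq} and Lemma~\ref{lem:z-vs-z2-volume}), the $O(\delta^2)$ control of $v$ from Lemma~\ref{lem:v-Lp}, and a Cauchy--Schwarz/Young absorption. The only real difference is organizational: you equate the two expansions of $E[h]$ arising from \eqref{eq:E-basic-c} and \eqref{eq:E-Delta-c}, cancelling $E[h]$ to exhibit the linearized Alexandrov identity $\int(|\nabla z|^2-cz^2)\,dx=\int_\Sigma(H-c)(N\cdot h)\,d\mu_f+O(\delta^3)$ with a manifestly coercive left-hand side (positive for $c<4$), whereas the paper stays with \eqref{eq:final-quad-form-c}, bounds the pairing term $-2\int\bigl[\Delta(f_0+h)+c(f_1\times f_2)\bigr]\cdot h\,dx$ via Cauchy--Schwarz and Young, and absorbs the resulting $\varepsilon\|z\|_{L^2}^2$ into the negative quadratic form $-\int|\nabla z|^2+2(c-1)\int z^2\le -(8-2c)\int z^2+O(\delta^4)$. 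These are algebraically equivalent; your version makes the Alexandrov/Minkowski pairing structure more transparent (and the error in your coercivity step is actually $O(\delta^4)$, slightly better than the $O(\delta^3)$ you quoted), while the paper avoids invoking the intrinsic expansion a second time and absorbs more directly.
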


\begin{proof}
 For a conformal
immersion $f$ we have
\[
\Delta f = -H(f_1\times f_2),
\]
so
\[
\Delta(f_0+h) + c(f_1\times f_2)
 = (c-H)(f_1\times f_2),
\]
 In particular,
\begin{equation}\label{eq:CMC-defect-pointwise}
\bigl|\Delta(f_0+h) + c(f_1\times f_2)\bigr|
 = \tfrac12|H-c|\,|\nabla f|^2,
\end{equation}
and hence
\begin{equation}\label{eq:CMC-defect-L2}
\int_{\mathbb{S}^2}\bigl|\Delta(f_0+h)
   + c(f_1\times f_2)\bigr|^2\,dx
 = \frac{1}{2}\int_{\Sigma} |H-c|^2|\nabla f|^2\,d\mu_f.
\end{equation}

By Cauchy--Schwarz and \eqref{eq:CMC-defect-pointwise},
\begin{equation}\label{eq:first-term-estimate}
\biggl|
\int_{\mathbb{S}^2}\Bigl[\Delta(f_0+h)
     + c(f_1\times f_2)\Bigr]\cdot h\,dx
\biggr|\le
\Bigl\||H-c||\nabla f|\Bigr\|_{L^2(\Sigma)}\,\|h\|_{L^2(\mathbb{S}^2)}.
\end{equation}
Since $|\nabla f|^2=2e^{2u}$ is uniformly comparable to $1$,
$\||H-c||\nabla f|\|_{L^2(\Sigma)}\leq C\|H-c\|_{L^2(\Sigma)}$.

Next, decomposing $h=v+zn$ and using Lemma~\ref{lem:v-Lp} with $p=2$, we
obtain
\begin{equation}\label{eq:h-L2-vs-z}
\|h\|_{L^2}
 \le \|v\|_{L^2} + \|z\|_{L^2}
 \le C\,\delta^2 + \|z\|_{L^2}.
\end{equation}
Combining \eqref{eq:first-term-estimate} and \eqref{eq:h-L2-vs-z} yields
\begin{equation}\label{eq:first-term-final}
\biggl|
\int_{\mathbb{S}^2}\Bigl[\Delta(f_0+h)
     + c(f_1\times f_2)\Bigr]\cdot h\,dx
\biggr|\le
C\,\|H-c\|_{L^2(\Sigma)}\bigl(\|z\|_{L^2}+\delta^2\bigr).
\end{equation}

By Young's inequality, for any $\varepsilon>0$,
\begin{equation}\label{eq:Young-z}
C\,\|H-c\|_{L^2(\Sigma)}\bigl(\|z\|_{L^2}+\delta^2\bigr)
 \;\le\; C_\varepsilon\,\|H-c\|_{L^2(\Sigma)}^2
       + \varepsilon\,(\|z\|_{L^2}^2+\delta^4).
\end{equation}

We now study the quadratic form in $z$ appearing in
\eqref{eq:final-quad-form-c}:
\begin{equation}\label{eq:z-quadratic-form}
-\int_{\mathbb{S}^2}|\nabla z|^2\,dx
 + 2(c-1)\int_{\mathbb{S}^2}z^2\,dx.
\end{equation}

Decompose $z$ into spherical harmonics.  By
Lemma~\ref{lem:norm-conseq}, the $L^2$--projection of $z$ onto the first
eigenspace (eigenvalue $\lambda_1=2$) is $O(\delta^2)$.  Moreover,
Lemma~\ref{lem:z-vs-z2-volume} implies
\[
\biggl|\int_{\mathbb{S}^2} z\,dx\biggr|
 \le C\int_{\mathbb{S}^2}z^2\,dx + C\delta^3,
\]
so the constant part of $z$ is also of size $O(\delta^2)$.  Thus the
low–frequency component of $z$ is $O(\delta^2)$, and on the orthogonal
complement the Laplacian has spectrum bounded below by $\lambda_2=6$,
yielding
\begin{equation}\label{eq:z-spectrum-lower}
\int_{\mathbb{S}^2}|\nabla z|^2\,dx
 \ge 6\int_{\mathbb{S}^2}z^2\,dx - C\delta^4.
\end{equation}
Plugging \eqref{eq:z-spectrum-lower} into \eqref{eq:z-quadratic-form} we get
\begin{equation}\label{eq:z-quadratic-negative}
-\int_{\mathbb{S}^2}|\nabla z|^2\,dx
 + 2(c-1)\int_{\mathbb{S}^2}z^2\,dx
 \le -(6-2(c-1))\int_{\mathbb{S}^2}z^2\,dx + C\delta^4.
\end{equation}
For $c<4$ we have $6-2(c-1)>0$, so the quadratic form in $z$ is strictly
negative modulo an $O(\delta^4)$ error.  In particular, for $\varepsilon>0$
small and $\delta$ sufficiently small, the $\varepsilon\|z\|_{L^2}^2$ term
in \eqref{eq:Young-z} can be absorbed into the negative contribution
\eqref{eq:z-quadratic-negative}, and the remaining error is bounded by
$C\delta^3$.

Finally, combining \eqref{eq:final-quad-form-c}, \eqref{eq:first-term-final},
\eqref{eq:Young-z}, \eqref{eq:z-quadratic-negative} and the error bound
\eqref{eq:Rc-error-bound}, we obtain
\[
E[h]
 \;\le\; C\,\|H-c\|_{L^2(\Sigma)}^2 + C\,\delta^3
\]
for all $c<4$ and $\delta$ sufficiently small.  This is
exactly \eqref{eq:E-controlled-by-Hc}.
\end{proof}

\begin{corollary}[$W^{1,2}$--control and almost-orthogonality of $z$]
\label{cor:W12-h-and-projections}
Assume the normalization and volume constraint of the previous sections, and
let
\[
h := f-f_0,\qquad h = v+zn,\qquad n=f_0,\qquad
\delta := \|h\|_{W^{2,2}(\mathbb{S}^2)}\ll1.
\]
Let $H$ be the scalar mean curvature of $\Sigma=f(\mathbb{S}^2)$ with respect
to the unit normal $n+\nu$, and let $c>0$ with $c<4$.
Then there exists $C<\infty$ such that, for $\delta$ sufficiently small,
\begin{align}
\|h\|_{W^{1,2}(\mathbb{S}^2)}^2
 &\le C\Bigl(\|H-c\|_{L^2(\Sigma)}^2 + \delta^3\Bigr),
 \label{eq:W12-h-control-c}\\[0.4em]
\biggl|\int_{\mathbb{S}^2} z\,dx\biggr|
 &\le C\Bigl(\|H-c\|_{L^2(\Sigma)}^2 + \delta^3\Bigr),
 \label{eq:int-z-control-c}\\[0.4em]
\biggl|\int_{\mathbb{S}^2} z\,x^i\,dx\biggr|
 &\le C\Bigl(\|H-c\|_{L^2(\Sigma)}^2 + \delta^3\Bigr),
 \qquad i=1,2,3,
 \label{eq:int-zxi-control-c}
\end{align}
or equivalently
\begin{equation}\label{eq:int-zn-vector-control-c}
\biggl|\int_{\mathbb{S}^2} z\,n\,dx\biggr|
 \;\le\; C\Bigl(\|H-c\|_{L^2(\Sigma)}^2 + \delta^3\Bigr).
\end{equation}
\end{corollary}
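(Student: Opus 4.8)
The plan is to extract two--sided control of the normal component $z$ by pairing the one--sided inequality $E[h]\le C(\|H-c\|_{L^2(\Sigma)}^2+\delta^3)$ of Proposition~\ref{prop:alexandrov-cmc-control} with a matching coercive lower bound for $E[h]$. The first step is to assemble a clean quadratic identity for the Dirichlet energy excess. Starting from $E[h]=4\int_{\mathbb{S}^2}z\,dx+\int_{\mathbb{S}^2}|\nabla h|^2\,dx$ (equation~\eqref{eq:E-basic-c}), and inserting the volume relation $\int_{\mathbb{S}^2}z\,dx=-\int_{\mathbb{S}^2}z^2\,dx+O(\delta^3)$ from \eqref{eq:z-vs-z2-volume} together with the gradient expansion $\int_{\mathbb{S}^2}|\nabla h|^2\,dx=\int_{\mathbb{S}^2}(|\nabla z|^2+2z^2)\,dx+O(\delta^3)$ from \eqref{eq:grad-h-error-W22-c}, one obtains the identity
\[
E[h]=\int_{\mathbb{S}^2}|\nabla z|^2\,dx-2\int_{\mathbb{S}^2}z^2\,dx+O(\delta^3).
\]

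Next I would reuse the spectral input already established inside the proof of Proposition~\ref{prop:alexandrov-cmc-control}: by Lemma~\ref{lem:norm-conseq} (the first moments $\int_{\mathbb{S}^2}z\,x^i\,dx$ are $O(\delta^2)$) and Lemma~\ref{lem:z-vs-z2-volume} (the mean of $z$ is $O(\delta^2)$, since $\|z\|_{L^2}\le\|h\|_{L^2}=O(\delta)$ forces $\int z^2=O(\delta^2)$), the $L^2$--projection of $z$ onto the eigenspaces of $-\Delta_{\mathbb{S}^2}$ with eigenvalues $0$ and $2$ has norm $O(\delta^2)$; hence the next eigenvalue $\lambda_2=6$ gives $\int_{\mathbb{S}^2}|\nabla z|^2\,dx\ge 6\int_{\mathbb{S}^2}z^2\,dx-C\delta^4$, which is \eqref{eq:z-spectrum-lower}. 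Feeding this into the identity above produces the coercive lower bound $E[h]\ge 4\int_{\mathbb{S}^2}z^2\,dx-C\delta^3$. Combining it with the upper bound of Proposition~\ref{prop:alexandrov-cmc-control} immediately yields $\int_{\mathbb{S}^2}z^2\,dx\le C(\|H-c\|_{L^2(\Sigma)}^2+\delta^3)$, and substituting this back into the identity bounds $\int_{\mathbb{S}^2}|\nabla z|^2\,dx$ by the same quantity, so that $\|z\|_{W^{1,2}(\mathbb{S}^2)}^2\le C(\|H-c\|_{L^2(\Sigma)}^2+\delta^3)$. To close \eqref{eq:W12-h-control-c} I would add the tangential contribution: Lemma~\ref{lem:v-Lp} with $p=2$ and Sobolev embedding give $\|v\|_{W^{1,2}(\mathbb{S}^2)}\le C\delta^2$, hence $\|v\|_{W^{1,2}}^2\le C\delta^4\le C\delta^3$; and since $\nabla(zn)=(\nabla z)n+z\nabla n$ with $|\nabla n|^2=2$ on $\mathbb{S}^2$ controls $\|zn\|_{W^{1,2}}$ by $C\|z\|_{W^{1,2}}$, the decomposition $h=v+zn$ delivers $\|h\|_{W^{1,2}(\mathbb{S}^2)}^2\le C(\|H-c\|_{L^2(\Sigma)}^2+\delta^3)$.

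The three projection estimates then follow with no further analysis. Estimate~\eqref{eq:int-z-control-c} is immediate from $|\int_{\mathbb{S}^2}z\,dx|\le\int_{\mathbb{S}^2}z^2\,dx+C\delta^3$ (Lemma~\ref{lem:z-vs-z2-volume}) and the bound on $\int z^2$ just obtained. For \eqref{eq:int-zxi-control-c} I would quote \eqref{eq:z-xi-bound} of Lemma~\ref{lem:norm-conseq}, namely $|\int_{\mathbb{S}^2}z\,x^i\,dx|\le C\|h\|_{W^{1,2}(\mathbb{S}^2)}^2$, and combine it with \eqref{eq:W12-h-control-c}; since the Euclidean components of the vector $\int_{\mathbb{S}^2}z\,n\,dx$ are exactly the scalars $\int_{\mathbb{S}^2}z\,x^i\,dx$ ($i=1,2,3$), this is the same as \eqref{eq:int-zn-vector-control-c}.

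I do not expect a genuine obstacle here: the real work --- the coercivity, i.e.\ the spectral gap on the high modes together with the near--orthogonality of $z$ to the bottom two eigenspaces --- has already been carried out in Proposition~\ref{prop:alexandrov-cmc-control}, and the restriction $c<4$ is inherited from there rather than reproved. The only point requiring care is the bookkeeping: checking that the constant in the lower bound is $\lambda_2-2=4>0$ (in particular independent of $c$), and that the assorted $O(\delta^3)$ and $O(\delta^4)$ remainders are all absorbed into the target quantity $\|H-c\|_{L^2(\Sigma)}^2+\delta^3$, using $\delta\ll1$ so that $\delta^4\le\delta^3$.
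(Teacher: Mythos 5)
Your proposal is correct and follows essentially the same route as the paper's own proof: combine the upper bound on $E[h]$ from Proposition~\ref{prop:alexandrov-cmc-control} with the identity $E[h]=4\int z+\int|\nabla h|^2$ of Lemma~\ref{lem:energy-identities-c}, the refined gradient expansion of Lemma~\ref{lem:grad-expansion-c}, and the volume relation of Lemma~\ref{lem:z-vs-z2-volume} to reach the quadratic form $\int(|\nabla z|^2-2z^2)$, then exploit the spectral gap on the high modes (with the low modes of size $O(\delta^2)$ by Lemmas~\ref{lem:norm-conseq} and~\ref{lem:z-vs-z2-volume}) to extract $\|z\|_{W^{1,2}}^2$, add $\|v\|_{W^{1,2}}^2=O(\delta^4)$ via Lemma~\ref{lem:v-Lp}, and finally quote the already-proved projection estimates. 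The only cosmetic difference is that you first bound $\int z^2$ and then back-substitute for $\int|\nabla z|^2$, whereas the paper states the coercivity of $Q(z)=\int(|\nabla z|^2-2z^2)$ on the high-frequency part directly; these are the same computation.
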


\begin{proof}
From Proposition~\ref{prop:alexandrov-cmc-control} we have
\begin{equation}\label{eq:E-upper-Hc}
E[h]
 \le C\Bigl(\|H-c\|_{L^2(\Sigma)}^2 + \delta^3\Bigr).
\end{equation}
Combining this with the identity (Lemma \ref{lem:energy-identities-c})
\begin{equation*}
E[h]
 = 4\int_{\mathbb{S}^2} z\,dx
   + \int_{\mathbb{S}^2}|\nabla h|^2\,dx,
\end{equation*}
the refined gradient expansion (Lemma \ref{lem:grad-expansion-c})
and the relation between the average of $z$ and $z^2$(Lemma \ref{lem:z-vs-z2-volume}),
we obtain the quadratic form estimate
\begin{equation}\label{eq:Qz-control}
Q(z)
 := \int_{\mathbb{S}^2}\bigl(|\nabla z|^2-2z^2\bigr)
 \;\le\; C\Bigl(\|H-c\|_{L^2(\Sigma)}^2 + \delta^3\Bigr).
\end{equation}

Using the spectral gap on $\mathbb{S}^2$ together with the normalization
(Lemma~\ref{lem:norm-conseq}) and Lemma~\ref{lem:z-vs-z2-volume}, the
$0$th and $1$st spherical harmonic components of $z$ satisfy
\begin{equation*}
\|P_{\le1} z\|_{L^2} \le C\delta^2,
\end{equation*}
so inserting \eqref{eq:Qz-control} yields
\begin{equation*}
\|z\|_{W^{1,2}(\mathbb{S}^2)}^2
 \le C\Bigl(\|H-c\|_{L^2(\Sigma)}^2 + \delta^3\Bigr).
\end{equation*}
For $v$, Lemma~\ref{lem:v-Lp} (with $p=2$) yields
\begin{equation*}
\|v\|_{W^{1,2}(\mathbb{S}^2)} \le C\delta^2.
\end{equation*}
Thus
\begin{equation}\label{eq:h-W12-final}
\|h\|_{W^{1,2}}^2
 \le C\bigl(\|z\|_{W^{1,2}}^2 + \|v\|_{W^{1,2}}^2\bigr)
 \le C\Bigl(\|H-c\|_{L^2(\Sigma)}^2 + \delta^3\Bigr),
\end{equation}
which establishes \eqref{eq:W12-h-control-c}.

Finally, Lemma~\ref{lem:norm-conseq} and Lemma \ref{lem:z-vs-z2-volume} implies
\begin{equation*}
\biggl|\int_{\mathbb{S}^2} z\,x^i\,dx\biggr|
 \le C\,\|h\|_{W^{1,2}(\mathbb{S}^2)}^2 \quad \text{ and } \quad \biggl|\int_{\mathbb{S}^2} zdx\biggr|\le \int_{\mathbb{S}^2}z^2dx+C\delta^3,
\end{equation*}
and inserting \eqref{eq:h-W12-final} gives \eqref{eq:int-zxi-control-c} and \eqref{eq:int-z-control-c}.
Since $n=(x^1,x^2,x^3)$, \eqref{eq:int-zn-vector-control-c} follows.
\end{proof}

We note that, except for Proposition~\ref{prop:alexandrov-cmc-control},
the estimates in this section do not rely on an a~priori Lipschitz bound
for~$f$ (or an $L^\infty$--bound for the conformal factor~$u$). We denote by $n_f=n+\nu$ the unit normal to $\Sigma$ along $f$, so that $\nu$
measures the normal deviation of $f$ from $f_0$. 
The following lemma gives a quantitative control of the normal error~$\nu$.
As expected from standard computations in differential geometry, we show that~$\nu$ is well approximated by~$-\nabla z$,
up to higher--order terms.
This estimate will be useful in the sequel; its proof will make essential
use of the qualitative $L^\infty$--smallness of~$u$.

\begin{lemma}[Control of $\nu+\nabla z$ in $L^p$]
\label{lem:nu-gradz-Lp-c}
Let $\delta := \|h\|_{W^{2,2}(\mathbb{S}^2)}$. Then for every $p\in[1,\infty)$
there exists a constant $C_p<\infty$ such that
\begin{equation}\label{eq:nu-gradz-Lp-c}
\|\nu+\nabla z\|_{L^p(\mathbb{S}^2)}
 \;\le\; C_p\,\delta^2.
\end{equation}
In particular,
\begin{equation}\label{eq:nu-gradz-L2-c}
\int_{\mathbb{S}^2}|\nu + \nabla z|^2\,dx
 \;\le\; C\,\delta^4.
\end{equation}
\end{lemma}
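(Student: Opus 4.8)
The plan is to derive an \emph{exact} pointwise identity for $\nu+\nabla z$ out of the two defining properties of the unit normal --- orthogonality to the tangent space and unit length --- and then to bound the resulting error terms in $L^p$; the only nontrivial input will be an a priori \emph{pointwise} bound $|\nu|\le C|\nabla h|$, and this is where the qualitative $L^\infty$-smallness of $u$ enters.

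First I fix a local $g_{\mathbb{S}^2}$-orthonormal frame $(e_1,e_2)$ with $n=e_1\times e_2=f_0$, write $h=v+zn$ with $v$ tangent, and set $h_i:=dh(e_i)$ and $f_i:=df(e_i)=e_i+h_i$. Using $\nabla_{e_i}n=e_i$ on $\mathbb{S}^2$ and $v\cdot n=0$, a short computation gives the normal component $h_i\cdot n=\partial_i z-v_i$. Writing the unit normal as $n_f=n+\nu=\frac{f_1\times f_2}{|f_1\times f_2|}$, the relations $n_f\cdot f_i=0$ and $|n_f|^2=1$ yield, respectively, $\nu\cdot e_i=-(\partial_i z-v_i)-\nu\cdot h_i$ and $\nu\cdot n=-\tfrac12|\nu|^2$; decomposing $\nu$ into its tangential and normal parts and summing, one obtains the exact identity
\[
\nu+\nabla z \;=\; v-\sum_{i}(\nu\cdot h_i)\,e_i-\tfrac12|\nu|^2\,n ,
\]
so that $|\nu+\nabla z|\le|v|+|\nu|\,|\nabla h|+\tfrac12|\nu|^2$ pointwise, and it remains to control each of the three terms in $L^p$.

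The term $v$ is handled directly by Lemma~\ref{lem:v-Lp}: since $W^{2,2}(\mathbb{S}^2)\hookrightarrow W^{1,q}$ for every $q<\infty$, one has $\|\nabla h\|_{L^{2p}}\le C_p\delta$, hence $\|v\|_{L^p}\le\|v\|_{W^{1,p}}\le C_p(\|\nabla h\|_{L^{2p}}^2+\|h\|_{W^{1,2}}^2)\le C_p\delta^2$. For the other two terms I need an a priori bound on $\nu$. Here I use conformality: taking the trace of \eqref{eq:conf-relation} gives $|f_1\times f_2|=e^{2u}=1+\operatorname{tr}M+\tfrac12|\nabla h|^2$ where $M_{ij}:=h_i\cdot e_j$, while $(f_1\times f_2)\cdot n=\det(I+M)=1+\operatorname{tr}M+\det M$. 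Combining these,
\[
|\nu|^2 \;=\; 2\bigl(1-n\cdot n_f\bigr) \;=\; \frac{|\nabla h|^2-2\det M}{e^{2u}} .
\]
Since $|\det M|\le\tfrac12|M|^2\le\tfrac12|\nabla h|^2$ (so the numerator lies in $[0,2|\nabla h|^2]$) and the qualitative bound $\|u\|_{L^\infty}\ll1$ forces $e^{2u}\ge\tfrac12$, we conclude $|\nu|\le 2|\nabla h|$ a.e., hence $\|\nu\|_{L^q}\le 2\|\nabla h\|_{L^q}\le C_q\delta$ for every $q<\infty$. This is the main obstacle of the proof, and the one place where qualitative smallness of $u$ is genuinely needed: $|\nabla h|$ is only $L^q$-small, not $L^\infty$-small, so a perturbative estimate of $\nu$ cannot be made pointwise without first exhibiting $|\nu|$ as algebraically dominated by $|\nabla h|$ through the conformal factor.

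Finally, inserting these bounds into the identity,
\[
\|\nu+\nabla z\|_{L^p}\;\le\;\|v\|_{L^p}+C\,\|\nu\|_{L^{2p}}\,\|\nabla h\|_{L^{2p}}+\tfrac12\|\nu\|_{L^{2p}}^2\;\le\;C_p\delta^2 ,
\]
which is \eqref{eq:nu-gradz-Lp-c}; the case $p=2$ is \eqref{eq:nu-gradz-L2-c}. The remaining work is purely bookkeeping: verifying $h_i\cdot n=\partial_i z-v_i$ and the trace identity for $e^{2u}$, and confirming the elementary inequalities $|\det M|\le\tfrac12|M|^2\le\tfrac12|\nabla h|^2$.
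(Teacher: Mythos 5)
Your proof is correct, and it takes a genuinely different route from the paper's. Both proofs start from the representation $n_f = 2\,f_1\times f_2 / |\nabla f|^2$, but the paper then Taylor-expands $2/|\nabla(f_0+h)|^2$ and the cross product, collecting remainders $R_1,R_2,R_3$ to arrive at $\nu=-\nabla z+R_3$ with $|R_3|\lesssim |\nabla v|+|\nabla h|^2+|\nabla h|^4$. You instead use the two structural constraints $n_f\cdot f_i=0$ and $|n_f|=1$ to produce the \emph{exact} algebraic identity
\[
\nu+\nabla z \;=\; v-\sum_i(\nu\cdot h_i)\,e_i-\tfrac12|\nu|^2\,n,
\]
which exhibits all error terms in closed form without any expansion, and then close the loop with the a~priori pointwise bound $|\nu|\le 2|\nabla h|$ extracted from the conformality relation (this is where the qualitative smallness of $u$ enters, exactly as it does, less explicitly, in the paper's Taylor remainder estimates). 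Both routes feed the tangential part into Lemma~\ref{lem:v-Lp} (the paper needs $\|\nabla v\|_{L^p}\lesssim\delta^2$, you need only $\|v\|_{L^p}\lesssim\delta^2$) and use $W^{2,2}\hookrightarrow W^{1,q}$ for the quadratic terms. Your approach is cleaner in that the error structure is transparent and exact rather than remainder-controlled, and the role of the lower bound $e^{2u}\ge\tfrac12$ is isolated in the single inequality $|\nu|\le 2|\nabla h|$; the paper's expansion is more mechanical but yields the explicit formula $\nu=-\nabla z+R_3$ with a remainder whose size is read off directly. All the intermediate identities you invoke ($h_i\cdot n=\partial_i z-v_i$, $e^{2u}=1+\operatorname{tr}M+\tfrac12|\nabla h|^2$, $(f_1\times f_2)\cdot n=\det(I+M)$, $|\det M|\le\tfrac12|M|^2\le\tfrac12|\nabla h|^2$) check out.
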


\begin{proof}

We express the normal variation $\nu$ via the normalized cross product.  In
local coordinates $(y^1,y^2)$ we have
\[
\nu = 2|\nabla(f_0+h)|^{-2}(f_0+h)_1\times(f_0+h)_2
      - (f_0)_1\times(f_0)_2.
\]
Our basic smallness assumption implies that
\[
\bigl||\nabla(f_0+h)|^2 - 2\bigr|\le 1
\]
pointwise. Writing
\[
|\nabla(f_0+h)|^2 = 2+\delta_1,
\]
where $\delta_1 := 2\nabla f_0\cdot\nabla h + |\nabla h|^2=O(|\nabla h|+|\nabla h|^2),$ we have
\[
\frac{2}{|\nabla(f_0+h)|^2}
 = \frac{2}{2+\delta_1}
 = 1 - \frac{\delta_1}{2} + R_1=1-\nabla f_0\cdot\nabla h-\frac{|\nabla h|^2}{2}+R_1,
\]
where $R_1 = \frac{\delta_1^2}{2(2+\delta_1)}$ and hence
\[
|R_1|\le C\,\delta_1^2
 = C\bigl||\nabla(f_0+h)|^2-2\bigr|^2\le C(|\nabla h|^2+|\nabla h|^4).
\]

  Expanding the cross product
then yields
\[
\nu
 = (f_0)_1\times h_2 + h_1\times(f_0)_2
   - (\nabla f_0\cdot\nabla h)n + R_2,
\]
with $|R_2|\le C(|\nabla h|^2+|\nabla h|^4)$.
Writing $h=v+z n$ and using
\[
h_1 = v_1 + z_1 n + z e_1,\qquad
h_2 = v_2 + z_2 n + z e_2,
\]
we obtain
\[
h_1\times(f_0)_2
 = v_1\times e_2 - z_1 e_1 + z n,\qquad
(f_0)_1\times h_2
 = e_1\times v_2 - z_2 e_2 + z n,
\]
thus by $\nabla f_0 \cdot \nabla h=\mathrm{div}h=\mathrm{div}z+2z$, we get
\begin{align}\label{eq:nu-grad-z-pointwise}
\nu=-\nabla z+2z n+O(|\nabla v|)-2zn+R_2=-\nabla z+R_3,  \qquad
|R_3|\le C\bigl(|\nabla v| + |\nabla h|^2 + |\nabla h|^4\bigr).
\end{align}

By Sobolev embedding,  for every $q<\infty$ there exists $C_q$ with
\[
\|\nabla h\|_{L^q(\mathbb{S}^2)}
 \le C_q\,\|h\|_{W^{2,2}(\mathbb{S}^2)}
 \le C_q\,\delta.
\]
From \eqref{eq:nu-grad-z-pointwise} and Lemma \ref{lem:v-Lp},  we get, for $p\in[1,\infty)$,
\[
\begin{split}
\|\nu+\nabla z\|_{L^p}
 &= \|R_3\|_{L^p}
 \le C\Bigl(\|\nabla h\|_{L^{2p}}^2 +\|h\|_{W^{1,2}}^2 +\|\nabla h\|_{L^{4p}}^4\Bigr)\le C_p\bigl(\delta^2 + \delta^4\bigr)
 \le C_p\,\delta^2
\end{split}
\]
for $\delta$ sufficiently small, which proves \eqref{eq:nu-gradz-Lp-c}. In
particular, taking $p=2$ gives \eqref{eq:nu-gradz-L2-c}.

\end{proof}

\section{Controlling the deviation by the CMC defect}

In this subsection we prove that the geometric deviation
\[
\|h\|_{W^{2,2}(\mathbb{S}^2)}+\|u\|_{L^\infty(\mathbb{S}^2)}
\]
is controlled by the $L^2$--oscillation of the mean curvature.  As a
consequence, we prove Theorem \ref{thm:main}.

Recall that the scalar mean curvature $H$ is defined with respect to the
unit normal $n+\nu$, and we denote its average by
\[
\overline H
 := \frac{1}{\mathcal{H}^2(\Sigma)}\int_{\Sigma}H\,d\mu_f.
\]
Under our assumptions we know that $|\overline H-2|\ll1$, so in the
Alexandrov-type estimate of the previous subsection we may (and will) take
$c=\overline H$.

We first show that the average mean curvature is close to $2$, with a
quantitative bound in terms of $H-\overline H$ and $\delta$.

\begin{lemma}[Control of $\overline H-2$]\label{lem:Hbar-2-control}
There exists $C<\infty$ such that, for $\delta:=\|h\|_{W^{2,2}(\mathbb{S}^2)}$ sufficiently small,
\begin{equation}\label{eq:Hbar-2-control}
|\overline H-2|
 \;\le\; C\Bigl(\|H-\overline H\|_{L^2(\Sigma)} ^2+\delta\,\|H-\overline H\|_{L^2(\Sigma)} + \delta^3\Bigr).
\end{equation}
\end{lemma}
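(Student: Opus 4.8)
The plan is to extract $\overline H-2$ from two classical integral identities and then estimate the resulting errors using the $W^{1,2}$--theory of the previous section. Throughout I write $\sigma:=\|H-\overline H\|_{L^2(\Sigma)}$ (the norm taken with respect to $d\mu_f$), and recall that the qualitative theory already forces $|\overline H-2|\ll1$, so Proposition~\ref{prop:alexandrov-cmc-control} may legitimately be applied with $c=\overline H<4$. First, pairing the conformal Laplace equation $\Delta f=-H\,(f_1\times f_2)$ (from the proof of Proposition~\ref{prop:alexandrov-cmc-control}) with $f$ and integrating over $\mathbb{S}^2$, then using $(f_1\times f_2)\cdot f\,dx=\langle x,n_f\rangle\,d\mu_f$ (as in the proof of Lemma~\ref{lem:z-vs-z2-volume}) and the integration by parts $-\int_{\mathbb{S}^2}\Delta f\cdot f\,dx=\int_{\mathbb{S}^2}|\nabla f|^2\,dx=2\int_{\mathbb{S}^2}e^{2u}\,dx=2\,\mathrm{Area}(\Sigma)$, one obtains the Minkowski--type identity $\int_\Sigma H\,\langle x,n_f\rangle\,d\mu_f=2\,\mathrm{Area}(\Sigma)$. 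Second, the divergence theorem in $\Omega$ together with the normalization $|\Omega|=\tfrac{4\pi}{3}$ gives $\int_\Sigma\langle x,n_f\rangle\,d\mu_f=3|\Omega|=4\pi$.

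Subtracting $\overline H$ times the second identity from the first and rearranging,
\[
4\pi(\overline H-2)=2\bigl(\mathrm{Area}(\Sigma)-4\pi\bigr)-\int_\Sigma(H-\overline H)\,\langle x,n_f\rangle\,d\mu_f .
\]
For the area term, the isoperimetric inequality together with $|\Omega|=|B_1|$ gives $\mathrm{Area}(\Sigma)\ge4\pi$, i.e.\ $E[h]=2(\mathrm{Area}(\Sigma)-4\pi)\ge0$, while Proposition~\ref{prop:alexandrov-cmc-control} with $c=\overline H$ gives $E[h]\le C(\sigma^2+\delta^3)$; hence $0\le\mathrm{Area}(\Sigma)-4\pi\le C(\sigma^2+\delta^3)$. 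For the curvature term, expand $\langle x,n_f\rangle=\langle f_0+h,\,f_0+\nu\rangle=1+z+\langle h,\nu\rangle-\tfrac12|\nu|^2$, using $|f_0|^2=1$ and $\langle f_0,\nu\rangle=-\tfrac12|\nu|^2$ (from $|f_0+\nu|=1$). Since $\int_\Sigma(H-\overline H)\,d\mu_f=0$ by definition of $\overline H$, the leading constant $1$ contributes nothing, so by Cauchy--Schwarz (working on $\mathbb{S}^2$, where $d\mu_f=e^{2u}\,dx$ with $e^{2u}$ bounded above and below by the qualitative $L^\infty$--bound on $u$)
\[
\Bigl|\int_\Sigma(H-\overline H)\langle x,n_f\rangle\,d\mu_f\Bigr|\le C\sigma\Bigl(\|z\|_{L^2(\mathbb{S}^2)}+\|h\|_{L^4(\mathbb{S}^2)}\|\nu\|_{L^4(\mathbb{S}^2)}+\|\nu\|_{L^4(\mathbb{S}^2)}^2\Bigr).
\]
By Sobolev embedding $\|z\|_{L^2}+\|h\|_{L^4}\le C\delta$, while $\|\nu\|_{L^4}\le\|\nu+\nabla z\|_{L^4}+\|\nabla z\|_{L^4}\le C\delta$ by Lemma~\ref{lem:nu-gradz-Lp-c} and the pointwise bound $|\nabla z|\le|\nabla h|+C|h|$. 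Hence the curvature term is $\le C\sigma\delta$, and inserting both estimates into the displayed identity yields $|\overline H-2|\le C(\sigma^2+\sigma\delta+\delta^3)$, which is \eqref{eq:Hbar-2-control}.

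The only points requiring care are: (i) subtracting the average in order to discard the order--one part of $\langle x,n_f\rangle$ — without this one would only obtain the useless bound $|\overline H-2|=O(\sigma)$; and (ii) the upper bound on the area excess $\mathrm{Area}(\Sigma)-4\pi$, which is not elementary and is precisely where the coercive estimate of Proposition~\ref{prop:alexandrov-cmc-control} (and hence the $L^\infty$--bound on $u$) enters, the correct sign being supplied by the isoperimetric inequality. Everything else is routine bookkeeping; in fact the term $\sigma\delta$ in \eqref{eq:Hbar-2-control} is produced only by the crude estimate $\|z\|_{L^2}\le C\delta$ and could be removed by instead invoking $\|z\|_{L^2(\mathbb{S}^2)}\le C(\sigma+\delta^{3/2})$ from Corollary~\ref{cor:W12-h-and-projections}, but the stated form already suffices for the sequel.
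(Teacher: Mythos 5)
Your proof is correct and follows essentially the same route as the paper's. Both use the Minkowski formula $\int_\Sigma H\langle x,n_f\rangle\,d\mu_f = 2\,\mathrm{Area}(\Sigma)$ and the volume identity $\int_\Sigma\langle x,n_f\rangle\,d\mu_f = 4\pi$, control the area excess via Proposition~\ref{prop:alexandrov-cmc-control}, and control the fluctuation term $\int_\Sigma(H-\overline H)\bigl(\langle x,n_f\rangle-1\bigr)\,d\mu_f$ by Cauchy--Schwarz together with Lemma~\ref{lem:nu-gradz-Lp-c}; the paper reaches the same key identity through the intermediate quantity $\overline\varphi=4\pi/\mathrm{Area}(\Sigma)$, whereas you subtract $\overline H$ times the volume identity directly, which is marginally cleaner but mathematically identical.
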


\begin{proof}
Let $\varphi := (n+\nu)\cdot f$.  By the divergence theorem (or Minkowski
formula) we have
\[
\int_{\Sigma} H\varphi\,d\mu_f = 2\,\mathcal{H}^2(\Sigma).
\]
We write
\[
H\varphi
 = (H-\overline H)(\varphi-\overline\varphi)
   + \overline H\,\overline\varphi
   + (H-\overline H)\overline\varphi
   + \overline H\,(\varphi-\overline\varphi),
\]
so that
\begin{equation}\label{eq:Hphi-decomp}
2\mathcal{H}^2(\Sigma)=\int_{\Sigma}H\varphi\,d\mu_f
 = \int_{\Sigma}(H-\overline H)(\varphi-\overline\varphi)\,d\mu_f
   + \overline H\,\overline\varphi\,\mathcal{H}^2(\Sigma),
\end{equation}
because the mixed terms integrate to zero.

On the other hand, by definition of $\varphi$ and the divergence theorem in
$\mathbb{R}^3$,
\[
\int_{\Sigma}\varphi\,d\mu_f
 = \int_{\Sigma}(n+\nu)\cdot f\,d\mu_f
 = \int_{\Omega}\operatorname{div}x\,dx = 3|\Omega|
 = 4\pi,
\]
since $|\Omega|=|B_1|=4\pi/3$.  Thus
\[
\overline\varphi
 = \frac{1}{\mathcal{H}^2(\Sigma)}\int_{\Sigma}\varphi\,d\mu_f
 = \frac{4\pi}{\mathcal{H}^2(\Sigma)}.
\]
By the area bound  Proposition \ref{prop:alexandrov-cmc-control},
\[
\bigl|\mathcal{H}^2(\Sigma)-4\pi\bigr|=\frac{1}{2}E(h)
 \le C\Bigl(\|H-\overline H\|_{L^2(\Sigma)}^2 + \delta^3\Bigr),
\]
we obtain
\begin{equation}\label{eq:phi-bar-near-1}
|\overline\varphi-1|
 \le C\Bigl(\|H-\overline H\|_{L^2(\Sigma)}^2 + \delta^3\Bigr).
\end{equation}

We now estimate the fluctuation $\varphi-\overline\varphi$.  Writing
\[
\varphi - 1
 = (n+\nu)\cdot f - n\cdot f_0
 = (n+\nu)\cdot h + \nu\cdot f_0,
\]
and using the bounds on $h$ and $\nu$ (
Lemma~\ref{lem:nu-gradz-Lp-c}) and the bound of the conformal factor, we
have
\[
\|\varphi-1\|_{L^2(\Sigma)} \le C\,\delta.
\]
This implies
\begin{equation}\label{eq:phi-fluctuation}
|\overline\varphi-1|\le C\delta,\qquad|\varphi-\overline\varphi\|_{L^2(\Sigma)} \le C\,\delta.
\end{equation}

Rewriting \eqref{eq:Hphi-decomp}, we have

\[
(2-\overline H\,\overline\varphi)\,\mathcal{H}^2(\Sigma)
 = \int_{\Sigma}(H-\overline H)(\varphi-\overline\varphi)\,d\mu_f.
\]
Hence
\[
|2-\overline H\,\overline\varphi|
 \le \frac{1}{\mathcal{H}^2(\Sigma)}
      \|H-\overline H\|_{L^2(\Sigma)}\,
      \|\varphi-\overline\varphi\|_{L^2(\Sigma)}
 \le C\,\delta\,\|H-\overline H\|_{L^2(\Sigma)},
\]
using \eqref{eq:phi-fluctuation} and the area bounds.

Finally,
\[
\overline H - 2
 = \overline H(1-\overline\varphi)
   + (\overline H\overline\varphi - 2),
\]
so
\[
|\overline H-2|
 \le C|\overline\varphi-1|
    + C\,\delta\,\|H-\overline H\|_{L^2(\Sigma)}
 \le C\Bigl(\|H-\overline H\|_{L^2(\Sigma)}^2+\delta\,\|H-\overline H\|_{L^2(\Sigma)} + \delta^3\Bigr),
\]
where we used \eqref{eq:phi-bar-near-1} in the last step.  This is
\eqref{eq:Hbar-2-control}.
\end{proof}

\subsection{Control of the vectorial defect $\vec H+2x$}

In this subsection we regard $H$ as the mean curvature \emph{vector}
\(\vec H\) (with respect to the unit normal $n+\nu$) and write
\(\vec H = -H(n+\nu)\).  We also denote by $x=f$ the position vector along
$\Sigma$.

\begin{lemma}[Control of $\vec H+2x$ in $L^2$]\label{lem:Hplus2x-L2}
There exists $C<\infty$ such that, for $\delta:=\|h\|_{W^{2,2}(\mathbb{S}^2)}$ sufficiently small,
\begin{equation}\label{eq:Hplus2x-L2}
\|\vec H+2x\|_{L^2(\Sigma)}^2
 \;\le\; C\Bigl(
   \|H-\overline H\|_{L^2(\Sigma)}^2
   + \delta\,\|H-\overline H\|_{L^2(\Sigma)}
   + \delta^3\Bigr).
\end{equation}
\end{lemma}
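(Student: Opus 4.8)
The plan is to split the vectorial defect into a scalar part and a normal-direction part:
\[
\vec H + 2x = -H(n+\nu) + 2(f_0+h)
            = -(H-\overline H)(n+\nu) - \overline H(n+\nu) + 2f_0 + 2h,
\]
and then group terms so that each group is controlled by a quantity already estimated in Sections~2--3. Writing $\overline H(n+\nu) = 2(n+\nu) + (\overline H-2)(n+\nu)$, we get
\[
\vec H+2x = -(H-\overline H)(n+\nu) - (\overline H-2)(n+\nu) - 2\nu + 2h.
\]
The first term has $L^2$-norm $\le C\|H-\overline H\|_{L^2(\Sigma)}$ since $|n+\nu|=1$ and $d\mu_f$ is comparable to $dx$; the second has $L^2$-norm $\le C|\overline H-2|$, which is controlled by Lemma~\ref{lem:Hbar-2-control} and is of the required form (in fact it is $\le C(\|H-\overline H\|_{L^2}^2 + \delta\|H-\overline H\|_{L^2} + \delta^3)$, which when squared is dominated by the right side). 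The genuine work is in the remaining piece $2h - 2\nu$.

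First I would observe that $\nu$ is a purely normal vector, while $h = v + zn$ has tangential part $v$ and normal part $zn$. So it is natural to write $2h - 2\nu = 2v + 2zn - 2\nu = 2v + 2(zn + \nabla z) - 2(\nu + \nabla z)$. By Lemma~\ref{lem:v-Lp} with $p=2$, $\|v\|_{L^2}\le\|v\|_{W^{1,2}}\le C\delta^2$; by Lemma~\ref{lem:nu-gradz-Lp-c}, $\|\nu+\nabla z\|_{L^2}\le C\delta^2$. This leaves the combination $zn + \nabla z$. Here the key point is that $zn + \nabla z$ is (up to sign) exactly the gradient of the function $z$ viewed on the sphere together with its radial-type correction; more usefully, by Corollary~\ref{cor:W12-h-and-projections} (applied with $c=\overline H$, which is legitimate since $|\overline H-2|\ll1$ so $\overline H<4$), we have the $W^{1,2}$-control $\|z\|_{W^{1,2}(\mathbb{S}^2)}^2 \le C(\|H-\overline H\|_{L^2(\Sigma)}^2 + \delta^3)$, hence $\|zn\|_{L^2} + \|\nabla z\|_{L^2} \le C(\|H-\overline H\|_{L^2(\Sigma)} + \delta^{3/2})$. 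Collecting all the pieces and squaring, every term is bounded by $C(\|H-\overline H\|_{L^2(\Sigma)}^2 + \delta\|H-\overline H\|_{L^2(\Sigma)} + \delta^3 + \delta^4)$, and $\delta^4\le\delta^3$ for $\delta$ small, which gives \eqref{eq:Hplus2x-L2}.

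The step I expect to be the main obstacle is making sure that the term $zn + \nabla z$ is handled correctly: it is tempting to think that $2h - 2\nu$ should vanish to leading order, but in fact the normal component $zn$ of $h$ does \emph{not} cancel against $\nu$ — rather, $\nu \approx -\nabla z$, so $2h - 2\nu \approx 2v + 2zn + 2\nabla z$, and the dominant contribution $2zn + 2\nabla z$ is genuinely of size $\|z\|_{W^{1,2}}$, not smaller. This is exactly why the lemma produces a bound governed by $\|H-\overline H\|_{L^2}$ (through $\|z\|_{W^{1,2}}$) rather than by $\delta^2$. The secondary technical care is the use of $c = \overline H$ in Corollary~\ref{cor:W12-h-and-projections}: one must invoke Lemma~\ref{lem:Hbar-2-control} first to ensure $\overline H < 4$ (indeed $\overline H$ is within $o(1)$ of $2$), and one should note that $\|H - \overline H\|_{L^2(\Sigma)}$ plays the role of $\|H - c\|_{L^2(\Sigma)}$ verbatim. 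Finally, all conversions between integrals over $\Sigma$ (with $d\mu_f$) and over $\mathbb{S}^2$ (with $dx$) use that $|\nabla f|^2 = 2e^{2u}$ with $\|u\|_{L^\infty}\ll1$, so the two measures are uniformly comparable; this is the one place where the qualitative $L^\infty$-bound on $u$ is used.
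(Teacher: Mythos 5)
Your proof is correct, but it takes a genuinely different route from the paper. The paper's argument expands $|\vec H+2x|^2 = |\vec H|^2 + 4\vec H\cdot x + 4|x|^2$ and uses the Minkowski formula $\int_\Sigma \vec H\cdot x\,d\mu_f = -2\mathcal{H}^2(\Sigma)$ to cancel the cross term \emph{exactly} against the constant contributions, leaving the clean identity
\[
\int_\Sigma |\vec H+2x|^2\,d\mu_f = \int_\Sigma(|\vec H|^2-4)\,d\mu_f + 4\int_\Sigma(|x|^2-1)\,d\mu_f,
\]
each side of which is then estimated via Lemma~\ref{lem:Hbar-2-control} and Corollary~\ref{cor:W12-h-and-projections}; note that $\nu$ never needs to be controlled. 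You instead decompose $\vec H+2x$ directly at the vector level as $-(H-\overline H)(n+\nu) - (\overline H-2)(n+\nu) - 2\nu + 2h$ and bound each piece in $L^2$, rewriting the critical piece $2h-2\nu$ as $2v + 2(zn+\nabla z) - 2(\nu+\nabla z)$ and invoking Lemma~\ref{lem:nu-gradz-Lp-c} for the last term together with $\|zn+\nabla z\|_{L^2} = \|z\|_{W^{1,2}}$ (since $n\perp\nabla z$) and Corollary~\ref{cor:W12-h-and-projections}. What your approach buys is a transparent geometric picture of which piece of the defect is genuinely $O(\|z\|_{W^{1,2}})$ rather than $O(\delta^2)$; what the paper's approach buys is economy — the Minkowski formula makes the cross term vanish exactly, so Lemma~\ref{lem:nu-gradz-Lp-c} is not needed here at all. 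Your bound is even nominally a bit sharper for the $(\overline H-2)$ contribution, since you square $|\overline H-2|$ rather than using it linearly, but this difference is immaterial under the smallness regime. One side remark: your claim that the conversion between $d\mu_f$ and $dx$ is ``the one place'' where the $L^\infty$-bound on $u$ is used is not quite accurate, since Lemma~\ref{lem:nu-gradz-Lp-c} itself (which you invoke) also relies on the qualitative smallness of $u$.
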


\begin{proof}
We expand
\[
\int_{\Sigma}|\vec H+2x|^2\,d\mu_f
 = \int_{\Sigma}|\vec H|^2\,d\mu_f
   + 4\int_{\Sigma}\vec H\cdot x\,d\mu_f
   + 4\int_{\Sigma}|x|^2\,d\mu_f.
\]
We treat the three terms separately.

Firstly, by the Minkowski formula,
\[
\int_{\Sigma}\vec H\cdot x\,d\mu_f
 = -\int_{\Sigma}H(x\cdot(n+\nu))\,d\mu_f
 = -2\,\mathcal{H}^2(\Sigma),
\]
so
\[
4\int_{\Sigma}\vec H\cdot x\,d\mu_f
 = -8\,\mathcal{H}^2(\Sigma).
\]

Secondly, we use the elementary decomposition
\[
\int_{\Sigma} (|\vec H|^2-4)\,d\mu_f
 = \int_{\Sigma} (H-\overline H)^2\,d\mu_f
   + (\overline H^2-4)\,\mu_f(\Sigma).
\]
Since $\overline H^2-4=(\overline H+2)(\overline H-2)$ and $|\overline H+2|$ is uniformly bounded for $\delta$ sufficiently small,
Lemma~\ref{lem:Hbar-2-control} yields
\begin{equation}\label{eq:H2-minus4-control}
\biggl|\int_{\Sigma}(|\vec H|^2-4)\,d\mu_f\biggr|
 \;\le\; C\Bigl(
   \|H-\overline H\|_{L^2(\Sigma)}^2
   + \delta\,\|H-\overline H\|_{L^2(\Sigma)}
   + \delta^3\Bigr).
\end{equation}

Lastly, since $x=f=f_0+h$ and $|f_0|=1$, we have
\[
|x|^2 - 1 = |f_0+h|^2-1 = 2f_0\cdot h + |h|^2
           = 2z + |h|^2,
\]
because $f_0=n$ and $h=v+zn$.  Hence
\[
\int_{\Sigma}(|x|^2-1)\,d\mu_f
 = 2\int_{\mathbb{S}^2}ze^{2u}dx + \int_{\Sigma}|h|^2\,d\mu_f.
\]
Using the identity
\[
2(e^{2u}-1)=2\nabla f_0\cdot\nabla h+|\nabla h|^2
\]
and Corollary~\ref{cor:W12-h-and-projections}, we obtain
\begin{equation}\label{eq:int-z-short}
\begin{split}
\Bigl|\int_{\mathbb{S}^2} z e^{2u}\,dx\Bigr|
&\le \Bigl|\int_{\mathbb{S}^2} z(e^{2u}-1)\,dx\Bigr|
     + \Bigl|\int_{\mathbb{S}^2} z\,dx\Bigr| \le C\Bigl(\|H-\overline H\|_{L^2(\Sigma)}^2 + \delta^3\Bigr).
\end{split}
\end{equation}

Similarly, by Corollary~\ref{cor:W12-h-and-projections} ,
\[
\int_{\mathbb{S}^2}|h|^2\,d\mu_f
 \le C\Bigl(\|H-\overline H\|_{L^2(\Sigma)}^2 + \delta^3\Bigr).
\]
Thus
\begin{equation}\label{eq:x2-minus1-control}
\biggl|\int_{\Sigma}(|x|^2-1)\,d\mu_f\biggr|
 \;\le\; C\Bigl(\|H-\overline H\|_{L^2(\Sigma)}^2 + \delta^3\Bigr),
\end{equation}

Putting the three pieces together,
\[
\begin{split}
\int_{\Sigma}|\vec H+2x|^2\,d\mu_f
&= \int_{\Sigma}(|\vec H|^2-4)\,d\mu_f
  + 4\int_{\Sigma}\vec H\cdot x\,d\mu_f
  + 4\int_{\Sigma}(|x|^2-1)\,d\mu_f\\
&= \int_{\Sigma}(|\vec H|^2-4)\,d\mu_f
  + 4\int_{\Sigma}(|x|^2-1)\,d\mu_f.
\end{split}
\]
So \eqref{eq:H2-minus4-control} and \eqref{eq:x2-minus1-control} yields exactly \eqref{eq:Hplus2x-L2}.
\end{proof}

\subsection{Elliptic upgrade to $W^{2,2}$ and control of the conformal factor}

We now use the mean curvature equation to upgrade the $W^{1,2}$--control of
$h$ to a $W^{2,2}$--estimate, and at the same time bound the conformal
factor $u$. This completes the proof of
Theorem~\ref{thm:main}.

Recall that for a conformal immersion $f$ we have
\[
\Delta f = \vec H\,e^{2u},
\]
while for the standard embedding $f_0(x)=x$ we have
\[
\Delta f_0 = -2f_0.
\]
Thus, for $h=f-f_0$,
\begin{equation}\label{eq:Delta-h-equation}
\Delta h
 = \vec H\,e^{2u} + 2f_0.
\end{equation}

\begin{proposition}[CMC stability estimate]\label{prop:CMC-stability}
There exist $\varepsilon_0>0$ and $C<\infty$ such that if
\(\|h\|_{W^{2,2}(\mathbb{S}^2)}+\|u\|_{C^0(\mathbb{S}^2)}\le\varepsilon_0\),
then
\begin{equation}\label{eq:W22uC0-vs-Hdefect}
\|h\|_{W^{2,2}(\mathbb{S}^2)} + \|u\|_{C^0(\mathbb{S}^2)}
 \;\le\; C\,\|H-\overline H\|_{L^2(\Sigma)}.
\end{equation}
\end{proposition}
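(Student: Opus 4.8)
The plan is to convert the mean--curvature equation \eqref{eq:Delta-h-equation} into an elliptic equation for $h$ whose right--hand side is controlled by the CMC defect, and to treat the conformal factor separately through the Liouville equation. Write $\mathcal{D}:=\|H-\overline H\|_{L^2(\Sigma)}$ and $\delta:=\|h\|_{W^{2,2}(\mathbb{S}^2)}$; since $\|u\|_{C^0}\le\varepsilon_0\ll1$ the measures $dx$ and $d\mu_f=e^{2u}dx$ are uniformly comparable and $\|f\|_{L^\infty}\le 1+C\delta\le 2$. Adding $2h$ to \eqref{eq:Delta-h-equation} and using $f=f_0+h$ gives
\[
(\Delta+2)h \;=\; e^{2u}\vec H+2f \;=\; e^{2u}(\vec H+2f)-2(e^{2u}-1)f .
\]
The operator $\Delta+2$ on $\mathbb{S}^2$, acting componentwise on $\mathbb{R}^3$--valued maps, has $L^2$--kernel equal to the nine--dimensional space $\{x\mapsto Mx:M\in\mathbb{R}^{3\times3}\}$ of degree--one vector fields, so, denoting by $P_1$ the corresponding projection, one has the elliptic inequality
\[
\|h\|_{W^{2,2}(\mathbb{S}^2)}\;\le\;C\bigl(\|(\Delta+2)h\|_{L^2(\mathbb{S}^2)}+\|P_1h\|_{L^2(\mathbb{S}^2)}\bigr).
\]

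For the first right--hand term I would use $\|\vec H+2f\|_{L^2}^2\le C(\mathcal D^2+\delta\mathcal D+\delta^3)$ (Lemma~\ref{lem:Hplus2x-L2}); a Young's inequality on the cross term together with $\delta^{3/2}\le\varepsilon_0^{1/2}\delta$ turns this into $\|\vec H+2f\|_{L^2}\le C\mathcal D+\eta\delta$ for any prescribed $\eta>0$. The factor $\|e^{2u}-1\|_{L^2}$ is handled by the pointwise identity $e^{2u}-1=\nabla f_0\cdot\nabla h+\tfrac12|\nabla h|^2$ together with $\|h\|_{W^{1,2}}\le C(\mathcal D+\delta^{3/2})$ (Corollary~\ref{cor:W12-h-and-projections}), giving $\|e^{2u}-1\|_{L^2}\le C\mathcal D+\eta\delta$. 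For the kernel term, decompose $h=v+zf_0$ and write $P_1h\simeq Mx$ with $M=\tfrac{3}{4\pi}\int_{\mathbb{S}^2}h\otimes f_0\,dx$; here $\int v\otimes f_0$ has antisymmetric part controlled by the rotational orthogonality relation of Lemma~\ref{lem:norm-conseq} and symmetric part rewritten via the conformal--Killing identity $\int v\cdot Sf_0\,dx=-\tfrac12\int(\operatorname{div}v)(f_0^\top Sf_0)\,dx$ and bounded by $\|v\|_{W^{1,2}}\le C\delta^2$ (Lemma~\ref{lem:v-Lp}), while $\int z\,f_0\otimes f_0=\tfrac13\bigl(\int z\bigr)I+\int z\,(f_0\otimes f_0-\tfrac13 I)$ is controlled by the bound on $\int z$ and by $\|z\|_{L^2}\le C(\mathcal D+\delta^{3/2})$, both from Corollary~\ref{cor:W12-h-and-projections}. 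Altogether $\|P_1h\|_{L^2}\le C\mathcal D+\eta\delta$, and feeding both estimates into the elliptic inequality and absorbing the $O(\eta\delta)$ term (for a suitable small $\eta$, with $\varepsilon_0$ correspondingly small) yields $\|h\|_{W^{2,2}(\mathbb{S}^2)}\le C\mathcal D$.

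It remains to bound $\|u\|_{C^0}$. With $\|h\|_{W^{2,2}}\le C\mathcal D$ one has $\|\nabla h\|_{L^p}\le C_p\mathcal D$ for every $p<\infty$; moreover, since the trace--free second fundamental form is linear in $\nabla^2f$ and vanishes identically on $f_0$, it follows that $\|\mathring A\|_{L^2(\Sigma)}\le C\|h\|_{W^{2,2}}\le C\mathcal D$. I would then use the Liouville equation $-\Delta u=K_ge^{2u}-1$ on $\mathbb{S}^2$ with the Gauss identity $K_g=\tfrac14H^2-\tfrac12|\mathring A|^2$: the negative part satisfies $K_g^-\le|\mathring A|^2$, so $\int_{\mathbb{S}^2}K_g^-e^{2u}\,dx\le C\|\mathring A\|_{L^2(\Sigma)}^2$, and by Gauss--Bonnet the positive part carries mass close to $4\pi$. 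A Wente--type estimate for the Gauss--curvature density in the conformal gauge, as in De~Lellis--M\"uller~\cite{dLMu} (applicable since our gauge is $W^{2,2}$--close to the standard one by Lemma~\ref{lem:partial-min}), then gives $\|u-\bar u\|_{L^\infty(\mathbb{S}^2)}\le C\|\mathring A\|_{L^2(\Sigma)}\le C\mathcal D$, where $\bar u:=\fint u\,dx$. Finally, the area estimate $|\mathcal H^2(\Sigma)-4\pi|=\tfrac12E[h]\le C\mathcal D^2$ (Proposition~\ref{prop:alexandrov-cmc-control}, using $\delta\le C\mathcal D$) combined with $\int e^{2u}\,dx=e^{2\bar u}\int e^{2(u-\bar u)}\,dx$ and $\int e^{2(u-\bar u)}\,dx=4\pi+O(\|u-\bar u\|_{L^2}^2)=4\pi+O(\mathcal D^2)$ forces $|\bar u|\le C\mathcal D^2$, whence $\|u\|_{C^0}\le C\mathcal D$. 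Together with $\|h\|_{W^{2,2}}\le C\mathcal D$ this is \eqref{eq:W22uC0-vs-Hdefect}.

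The main obstacle is the honest control of the low frequencies of $h$. The translational and M\"obius normalization removes only the tangential degree--one modes (infinitesimal rotations) and the radial rescaling mode; the five symmetric--traceless ``ellipsoidal'' modes survive, and showing that their contribution to $P_1h$ is $O(\mathcal D)$ rather than merely $O(\delta)$ is precisely what the quadratic energy expansion and the spherical spectral gap of Section~2 are designed for. A second delicate point is the passage from the $L^p$--information on $u$ --- which is cheap, via $e^{2u}-1=\nabla f_0\cdot\nabla h+\tfrac12|\nabla h|^2$ --- to the $L^\infty$--bound: the right--hand side of the Liouville equation is only in $L^1$, since $H^2$ and $|\mathring A|^2$ need not be better than integrable, so a naive $W^{2,p}$--estimate fails and the Jacobian (Wente) structure of $K_ge^{2u}$ is essential; this is where the qualitative $L^\infty$--smallness of $u$ is genuinely used.
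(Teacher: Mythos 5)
Your proof is essentially correct and matches the paper's strategy for the $h$--estimate, but takes a genuinely different route for the conformal factor. For $h$, you use $(\Delta+2)$ with explicit projection of the low frequencies out of the kernel, whereas the paper simply keeps $\Delta$ and treats the low modes by carrying along $\|h\|_{L^2}$ in the elliptic estimate and absorbing it via Corollary~\ref{cor:W12-h-and-projections}; the two formulations are equivalent, and your kernel analysis (antisymmetric part from \eqref{eq:orth-rot}, traceless symmetric part crude in $\|v\|_{L^2}\lesssim\delta^2$, degree-two part of $z$ crude in $\|z\|_{L^2}$) is fine, although the ``conformal--Killing identity'' you invoke for $\int v\cdot Sf_0$ is unnecessary since the brute $L^2$ bound on $v$ already suffices. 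For $u$, the paper does \emph{not} pass through the Gauss curvature or De~Lellis--M\"uller: it works in local conformal charts, writes the Liouville difference as a Jacobian of frame differences $d(e_i-e_i^0)\wedge de_j$, extends to $\mathbb{R}^2$, applies Wente there, and controls the residual harmonic part by $\|u\|_{L^1}$, which is then bounded directly from the conformality identity. Your route instead black--boxes the De~Lellis--M\"uller $C^0$ estimate via $K_g=\tfrac14H^2-\tfrac12|\mathring A|^2$ and the bound $\|\mathring A\|_{L^2}\le C\|h\|_{W^{2,2}}$; this is viable but hides a genuine step --- De~Lellis--M\"uller produce \emph{some} conformal parametrization $\psi$ with $\|u_\psi\|_{L^\infty}\le C\|\mathring A\|_{L^2}$, and to transfer this to \emph{your} normalized $f$ you must quantify the M\"obius transformation $\varphi=\psi^{-1}\circ f$ and show $\|\log\mathrm{Jac}_\varphi\|_{L^\infty}\lesssim\mathcal D$, using that both $f$ and $\psi$ are $W^{2,2}$--close to $f_0$ at scale $\mathcal D$ (the latter also needing the already--established $\|h\|_{W^{2,2}}\le C\mathcal D$, so no circularity). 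You acknowledge this in passing but do not carry it out; it is a fillable gap, and the paper's local Wente argument avoids it entirely by never leaving the given gauge. Your $\bar u$ step via the area excess is correct and plays the role of the paper's $\|u\|_{L^1}$ bound.
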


\begin{proof}
We first estimate the right-hand side of \eqref{eq:Delta-h-equation}.  Write
\[
\vec H\,e^{2u} + 2f_0
 = (\vec H+2f)\,e^{2u} - 2f\,(e^{2u}-1) + 2(f_0-f),
\]
where $f$ is the position vector along $\Sigma$ and we identify $\Sigma$
with $\mathbb{S}^2$ via $f$.  The last term is simply
\[
2(f_0-f) = -2h,
\]
so its $L^2$–norm is controlled by $\|h\|_{L^2}$.

To estimate the factor $e^{2u}-1$, note that the pointwise identity
\[
2(e^{2u}-1)
 = 2\nabla f_0\cdot\nabla h + |\nabla h|^2.
\]
So that
\[
\|e^{2u}-1\|_{L^2(\mathbb{S}^2)}
 \le C\Bigl(\|\nabla h\|_{L^2(\mathbb{S}^2)} + \|\nabla h\|_{L^4(\mathbb{S}^2)}^2\Bigr).
\]
Hence
\[
\|2f\,(e^{2u}-1)\|_{L^2(\mathbb{S}^2)}
 \le 2\,\|e^{2u}-1\|_{L^2(\mathbb{S}^2)}
 \le 2\Bigl(\|\nabla h\|_{L^2} + \|\nabla h\|_{L^4}^2\Bigr),
\]
since $|f|=1$ on $\mathbb{S}^2$.

Finally, for the term $(\vec H+2f)\,e^{2u}$ we use the $L^2$–control of
$\vec H+2f$ from Lemma~\ref{lem:Hplus2x-L2} and the fact that $e^{2u}$ is
uniformly bounded when $\delta$ is small (this follows from the smallness of
$h$ and the conformality relation).  Thus
\[
\|(\vec H+2f)\,e^{2u}\|_{L^2(\mathbb{S}^2)}
 \le C\,\|\vec H+2f\|_{L^2(\Sigma)}.
\]

Collecting these estimates we obtain
\[
\|\vec H\,e^{2u}+2f_0\|_{L^2(\mathbb{S}^2)}
 \le C\Bigl(
   \|\vec H+2f\|_{L^2(\Sigma)}
   + \|h\|_{W^{1,2}(\mathbb{S}^2)}
   + \|\nabla h\|_{L^4(\mathbb{S}^2)}^2\Bigr).
\]
 We may combine this with
Lemma~\ref{lem:Hplus2x-L2} and
Corollary~\ref{cor:W12-h-and-projections} to conclude that
\begin{equation}\label{eq:RHS-Delta-h}
\|\vec H\,e^{2u} + 2f_0\|_{L^2(\mathbb{S}^2)}
 \;\le\; C\Bigl(
   \|H-\overline H\|_{L^2(\Sigma)}
   + \delta^{3/2}+\sqrt{\|H-\overline{H}\|_{L^2(\Sigma)}\delta}\Bigr).
\end{equation}

Now apply the standard elliptic 
\[
\|h\|_{W^{2,2}(\mathbb{S}^2)}
 \le C\Bigl(
   \|\Delta h\|_{L^2(\mathbb{S}^2)}
   + \|h\|_{L^2(\mathbb{S}^2)}\Bigr).
\]
Using \eqref{eq:Delta-h-equation}, \eqref{eq:RHS-Delta-h} and the $L^2$--bound
for $h$ from Corollary~\ref{cor:W12-h-and-projections}, we obtain
\[
\delta=\|h\|_{W^{2,2}(\mathbb{S}^2)}
 \le C\Bigl(
   \|H-\overline H\|_{L^2(\Sigma)}
   + \delta^{3/2}+\sqrt{\|H-\overline{H}\|_{L^2(\Sigma)}\delta}\Bigr)\le C_\varepsilon\|H-\overline{H}\|_{L^2(\Sigma)}+(\varepsilon+C\delta^{\frac{1}{2}})\delta.
\]
For $\delta$ sufficiently small such that $C\delta^{\frac{1}{2}}\le \frac{1}{4}$, taking $\varepsilon=\frac{1}{4}$, the terms involving $\delta$ on the right-hand side  can be absorbed
into the left-hand side, yielding
\begin{align}\label{eq:h w22 estimate}
\|h\|_{W^{2,2}(\mathbb{S}^2)}
 \le C\,\|H-\overline H\|_{L^2(\Sigma)}.
\end{align}

We finally estimate the conformal factor $u$ in terms of the geometric
deviation $h=f-f_0$.  Note that $u$ is already known to be small in $L^\infty$ by the
qualitative theory.

We work in local conformal coordinates on $\mathbb{S}^2$.  Fix a finite
collection of conformal charts
\[
\iota_\alpha : D^2 \longrightarrow \mathbb{S}^2,
\]
with $D^2\subset\mathbb{R}^2$ the unit disc, such that $\{\iota_\alpha(D^2)\}$
covers $\mathbb{S}^2$.
In each chart we have conformal representations
\[
(f_0\circ\iota_\alpha)^*g_{\mathbb{R}^3}
 = e^{2w_{0,\alpha}} g_{D^2},\qquad
(f\circ\iota_\alpha)^*g_{\mathbb{R}^3}
 = e^{2w_\alpha} g_{D^2},
\]
for some $w_{0,\alpha}\in C^\infty(\overline{D^2})$ and
$w_\alpha\in W^{2,2}(D^2)\cap L^\infty(D^2)$.  By definition of $u$ we have
\[
w_\alpha - w_{0,\alpha} = u\circ \iota_\alpha.
\]

For notational convenience we suppress the index $\alpha$ and write, on a
fixed disc $D^2$,
\[
f_0 := f_0\circ\iota,\qquad f := f\circ\iota,\qquad
w_0 := w_{0,\iota},\qquad w := w_\iota,\qquad
u_\iota := u\circ\iota = w-w_0.
\]
We denote by $f_i := \partial_i f$ and $(f_0)_i := \partial_i f_0$
the coordinate derivatives on $D^2$, so that
\[
|f_i| = e^w,\qquad |(f_0)_i| = e^{w_0}.
\]

We now introduce the normalized tangent frames
\[
e_i^0 := \frac{(f_0)_i}{|(f_0)_i|} = e^{-w_0}(f_0)_i,
\qquad
e_i := \frac{f_i}{|f_i|} = e^{-w} f_i,
\]
so that $\{e_1^0,e_2^0\}$ and $\{e_1,e_2\}$ are orthonormal frames for the
pullback metrics of $f_0$ and $f$ on $D^2$, respectively. As is well known,
the Gauss curvature can be written in terms of these frames as
\[
-\Delta w_0 =  \ast(de_1^0\wedge de_2^0),\qquad
-\Delta w     = \ast(de_1   \wedge de_2),
\]
where $\ast$ denotes the Hodge star on $D^2$.

Subtracting the two identities yields an equation for the difference
$u_\iota = w-w_0$:
\begin{equation}\label{eq:Delta-u-local}
-\Delta u_\iota
 = \ast\bigl(d(e_1-e_1^0)\wedge de_2\bigr)
   + \ast\bigl(de_1^0\wedge d(e_2-e_2^0)\bigr)
 \;=: F_\iota.
\end{equation}

Since $|f_i|=e^w$ and $u$ is small in $L^\infty$, we have a uniform lower
bound $|f_i|\ge c>0$ on $D^2$.  Differentiating the definition
$e_i=f_i/|f_i|$ and using the corresponding formula for $e_i^0$, one checks
that
\begin{equation}\label{eq:frame-difference-estimate}
\|e_i-e_i^0\|_{L^2(D^2)}+\|d(e_i-e_i^0)\|_{L^2(D^2)}
 \;\le\; C\,\|h\|_{W^{2,2}(D^2)},\qquad i=1,2.
\end{equation}
On the other hand, $de_i$ and $de_i^0$ are uniformly bounded in $L^2$ on
$D^2$ (the latter because $f_0$ is fixed and smooth, the former because $f$
is conformal and $h$ is small in $W^{2,2}$).
Now, using the extend operator $E:W^{1,2}(D)\to W^{1,2}(\mathbb{R}^2)$ to construct 
\begin{align*}
\tilde{e}_i=E(e_i) \quad \text{ and } \tilde{e}_i^0=E(e_i^0), 
\end{align*}
then $\|\tilde{e}_i\|_{W^{1,2}(\mathbb{R}^2)}+\|\tilde{e}_i^0\|_{W^{1,2}(\mathbb{R}^2)}\le C$ and 
\begin{align*}
\|d(\tilde{e}_i-\tilde{e}_i^0)\|_{L^2(\mathbb{R}^2)}\le C\|e_i-e_i^0\|_{W^{1,2}(D)}\le C\|h\|_{W^{2,2}(D)}. 
\end{align*}

By Wente's inequality\cite{W69}\cite{W80}\cite{MS95}, there exists a unique  solution $v_\iota$ of 
\[
-\Delta v_\iota=\tilde{F}_\iota:=\ast\bigl(d(\tilde{e}_1-\tilde{e}_1^0)\wedge d\tilde{e}_2\bigr)
   + \ast\bigl(d\tilde{e}_1^0\wedge d(\tilde{e}_2-\tilde{e}_2^0)\bigr)
\]
 with $v_\iota(\infty)=0$ and 
\begin{align*}
\|v_\iota\|_{L^\infty(\mathbb{R}^2)}\leq C(\|\tilde{e}_i\|_{W^{1,2}(\mathbb{R}^2)}+\|\tilde{e}_i^0\|_{W^{1,2}(\mathbb{R}^2)})\|d(\tilde{e}_i-\tilde{e}_i^0)\|_{L^2(\mathbb{R}^2)}\leq  C\|h\|_{W^{2,2}(D)}. 
\end{align*}
Since $-\Delta u_\iota = F_\iota=\tilde{F}_{\iota}|_{D}$, we obtain
$$
\Delta (u_\iota-v_\iota)=0 \text{ in } D
$$
and hence 
\begin{align*}
\|u_\iota-v_{\iota}\|_{L^\infty(D')}\le C\|u_\iota-v_\iota\|_{L^1(D)} \text{ for any } D'\subset\subset D. 
\end{align*}
As a conclusion, 
\begin{align*}
\|u_\iota\|_{L^\infty(D')}\le C(\|u\|_{L^1(D)}+\|v\|_{L^\infty(D)})\le C (\|u\|_{L^1(D)}+\|h\|_{W^{2,2}(D)}). 
\end{align*}

Summing over the finitely many charts and using a partition of unity, we
arrive at the global estimate
\begin{equation}\label{eq:u-Linfty-intermediate}
\|u\|_{L^\infty(\mathbb{S}^2)}
 \;\le\; C\, (\|h\|_{W^{2,2}(\mathbb{S}^2)}+\|u\|_{L^1(\mathbb{S}^2)}).
\end{equation}

It remains to estimate $\|u\|_{L^1}$.
Using the conformality relation
$
|\nabla f|^2 = 2e^{2u},|\nabla f_0|^2 = 2,
$
we have
\[
2(e^{2u}-1)
 = |\nabla f|^2 - |\nabla f_0|^2
 = 2\nabla f_0\cdot\nabla h + |\nabla h|^2.
\]
On the other hand, the inequality
\[
\bigl|e^{2u}-1-2u\bigr|\le C u^2
\]
implies
\[
\int_{\mathbb{S}^2} |u|\,dx
 \;\le\; \frac12\int_{\mathbb{S}^2}\Bigl|e^{2u}-1\Bigr|\,dx
         + C\int_{\mathbb{S}^2} u^2\,dx.
\]
Using the identity for $2(e^{2u}-1)$,
\[
\int_{\mathbb{S}^2}\Bigl|e^{2u}-1\Bigr|\,dx
 \le  \int_{\mathbb{S}^2}|\nabla f_0\cdot\nabla h|\,dx
   + \frac12\int_{\mathbb{S}^2}|\nabla h|^2\,dx
\le\; C\|h\|_{W^{1,2}(\mathbb{S}^2)}.
\]
Therefore
\begin{equation}\label{eq:int-u-estimate}
\int_{\mathbb{S}^2} |u|\,dx
 \;\le\; C\|h\|_{W^{1,2}(\mathbb{S}^2)} + C\|u\|_{L^2(\mathbb{S}^2)}^2.
\end{equation}

Inserting \eqref{eq:int-u-estimate} into
\eqref{eq:u-Linfty-intermediate} and using $\|u\|_{L^\infty(\mathbb{S}^2)}\ll 1$, we finally obtain
\begin{equation}\label{eq:u-Linfty-final}
\|u\|_{L^\infty(\mathbb{S}^2)}
 \;\le\; C\,\|h\|_{W^{2,2}(\mathbb{S}^2)}.
\end{equation}
Substituting this into \eqref{eq:h w22 estimate}, we get 
\begin{align*}
\|u\|_{L^\infty(\mathbb{S}^2)}+\|h\|_{W^{2,2}(\mathbb{S}^2)}\le C\|H-\overline{H}\|_{L^2(\Sigma)}. 
\end{align*}
This completes the proof of \eqref{eq:W22uC0-vs-Hdefect}.
\end{proof}

\section{From an area bound to the two--sphere--threshold Willmore bound}
\label{sec:area-implies-willmore}

In this section we show that, in the small--defect regime, an area bound below
the two--bubble threshold forces the two--sphere--threshold Willmore bound
\eqref{eq:Willmore-below-two-spheres}.  The key input is the quantitative
Alexandrov theorem of Julin--Niinikoski~\cite[Theorem~1.2]{JulinNiinikoski23}.

\begin{proposition}[Area bound $\Rightarrow$ two--sphere--threshold Willmore bound]
\label{prop:area-implies-willmore}
Fix $\alpha\in(0,\tfrac12)$ and $\beta>0$.  There exists
$\delta_0=\delta_0(\alpha,\beta)>0$ with the following property.

Let $\Omega\subset\mathbb{R}^3$ be a bounded domain whose boundary
$\Sigma=\partial\Omega$ is a smooth connected embedded surface.  After
rescaling assume that $|\Omega|=\tfrac{4\pi}{3}$.  If
\begin{equation}\label{eq:area-and-defect-assumptions}
\mu_g(\Sigma)\le 4\pi\,\sqrt[3]{2}-\beta,
\qquad
\|H-\overline H\|_{L^2(\Sigma)}\le \delta_0,
\end{equation}
then
\begin{equation}\label{eq:willmore-two-sphere-threshold-conclusion}
\int_{\Sigma}|H|^2\,d\mu_g \le 32\pi(1-\alpha).
\end{equation}
\end{proposition}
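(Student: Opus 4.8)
The plan is to deduce from the quantitative Alexandrov theorem of Julin--Niinikoski \cite{JulinNiinikoski23} that, in the small--defect regime, $\Sigma$ is quantitatively close to a \emph{single} round sphere of radius close to $1$, and then to read off \eqref{eq:willmore-two-sphere-threshold-conclusion} from the elementary orthogonal splitting
\[
\int_\Sigma |H|^2\,d\mu_g=\|H-\overline H\|_{L^2(\Sigma)}^2+\overline H^{\,2}\,\mu_g(\Sigma),
\]
valid since $\overline H$ is the $L^2$--average. Write $\delta:=\|H-\overline H\|_{L^2(\Sigma)}\le\delta_0$ and $\rho:=2/\overline H$ (positive in this regime, with $\rho=1$ when $H\equiv 2$). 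The normalization $|\Omega|=\tfrac{4}{3}\pi$ together with $\mu_g(\Sigma)\le 4\pi\sqrt[3]{2}-\beta$ furnishes the a~priori perimeter bound needed to apply \cite[Theorem~1.2]{JulinNiinikoski23} with constants depending only on $\beta$: there exist $N\in\mathbb{N}$, points $x_1,\dots,x_N\in\mathbb{R}^3$ with $|x_i-x_j|\ge 2\rho$ for $i\neq j$, and a modulus $\omega_\beta$ with $\omega_\beta(0^+)=0$, such that $\Sigma$ lies in a thin annular neighbourhood of $\bigcup_{i=1}^N\partial B_\rho(x_i)$ and, as a consequence,
\[
\Bigl|\,|\Omega|-\tfrac{4}{3}\pi N\rho^{3}\Bigr|\le\omega_\beta(\delta),
\qquad
\bigl|\mu_g(\Sigma)-4\pi N\rho^{2}\bigr|\le\omega_\beta(\delta).
\]

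The first step is to show $N=1$. The volume identity gives $|N\rho^{3}-1|\le C\omega_\beta(\delta)$, so $\rho$ is pinched between two positive constants and $N\rho^{2}=N^{1/3}(N\rho^{3})^{2/3}\ge N^{1/3}\bigl(1-C\omega_\beta(\delta)\bigr)$. Inserting this into the perimeter identity yields
\[
4\pi N^{1/3}\bigl(1-C\omega_\beta(\delta)\bigr)\le\mu_g(\Sigma)+\omega_\beta(\delta)\le 4\pi\sqrt[3]{2}-\beta+\omega_\beta(\delta),
\]
and choosing $\delta_0=\delta_0(\beta)$ so small that the error terms are dominated by $\beta$ forces $N^{1/3}<\sqrt[3]{2}$, hence $N=1$. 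This is the geometric heart of the argument: $4\pi\sqrt[3]{2}$ is precisely the perimeter of two disjoint balls of equal radii enclosing the normalized volume $\tfrac{4}{3}\pi$, so the sub--two--spheres area bound is exactly what excludes the first bubbling configuration. I expect this comparison, together with correctly transcribing the (equal--radius, almost disjoint, quantitatively close in both volume and perimeter) conclusion of \cite{JulinNiinikoski23}, to be the main point requiring care.

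Once $N=1$ is established, the volume identity gives $|\rho-1|\le C\omega_\beta(\delta)$, hence $|\overline H-2|=|2/\rho-2|\le C\omega_\beta(\delta)$ and $|\mu_g(\Sigma)-4\pi|\le C\omega_\beta(\delta)$. Substituting into the orthogonal splitting,
\[
\int_\Sigma|H|^2\,d\mu_g\le\delta^2+\bigl(2+C\omega_\beta(\delta)\bigr)^2\bigl(4\pi+C\omega_\beta(\delta)\bigr),
\]
whose right--hand side tends to $16\pi$ as $\delta\downarrow 0$. Since $16\pi<32\pi(1-\alpha)$ for every $\alpha\in(0,\tfrac12)$, one finally shrinks $\delta_0$ to a value $\delta_0(\alpha,\beta)$ for which the right--hand side is at most $32\pi(1-\alpha)$; this gives \eqref{eq:willmore-two-sphere-threshold-conclusion} and completes the proof.
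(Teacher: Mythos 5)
Your proof is correct and takes essentially the same approach as the paper: both invoke the Julin--Niinikoski quantitative Alexandrov theorem, use the volume normalization and the area bound to force the number of bubbles $N$ to be $1$ (with $\rho\to1$, $\overline H\to 2$), and conclude via the orthogonal splitting $\int_\Sigma|H|^2 = \|H-\overline H\|_{L^2}^2 + \overline H^2\mu_g(\Sigma)\to 16\pi < 32\pi(1-\alpha)$. The only cosmetic difference is that the paper argues by contradiction along a sequence $\delta_i\downarrow 0$ while you carry an explicit modulus $\omega_\beta$ through a direct argument; the mathematical content is identical.
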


\begin{proof}
Argue by contradiction.  Then there exist $\varepsilon_i\downarrow 0$ and
bounded smooth domains $\Omega_i\subset\mathbb{R}^3$ with connected embedded
boundaries $\Sigma_i=\partial\Omega_i$ such that, after rescaling,
\begin{equation}\label{eq:contradiction-setup}
|\Omega_i|=\tfrac{4\pi}{3},\qquad
\mu_{g_i}(\Sigma_i)\le 4\pi\,\sqrt[3]{2}-\beta,\qquad
\|H_i-\overline H_i\|_{L^2(\Sigma_i)}\le \delta_i,
\end{equation}
but
\begin{equation}\label{eq:contradiction-energy}
\int_{\Sigma_i}|H_i|^2\,d\mu_{g_i} \;>\; 32\pi(1-\alpha)\qquad\text{for all }i.
\end{equation}

Set $E_i:=\Omega_i$ and $\lambda_i:=\overline H_i$.  Since $\mu_{g_i}(\Sigma_i)$ is
uniformly bounded and $|E_i|=\tfrac{4\pi}{3}$, the hypotheses
$P(E_i)\le C_0$ and $|E_i|\ge 1/C_0$ in~\cite[Theorem~1.2]{JulinNiinikoski23}
hold for a suitable $C_0=C_0(\delta_0)$.  For $i$ large we also have
$\|H_{E_i}-\lambda_i\|_{L^2(\partial E_i)}=\|H_i-\overline H_i\|_{L^2(\Sigma_i)}\le\delta_i$,
so~\cite[Theorem~1.2]{JulinNiinikoski23} applies (with $n=2$) and yields:
\begin{itemize}
\item a uniform bound $1/C\le \lambda_i\le C$, hence also $R_i:=2/\lambda_i\in[1/C',C']$;
\item an integer $N_i\in\mathbb{N}$ and points $x_{i,1},\dots,x_{i,N_i}$ with
$|x_{i,k}-x_{i,\ell}|\ge 2R_i$ for $k\neq\ell$;
\item writing $F_i:=\bigcup_{k=1}^{N_i} B_{R_i}(x_{i,k})$, one has the
perimeter estimate
\begin{equation}\label{eq:JN-perimeter-approx}
\bigl|\mu_{g_i}(\Sigma_i)-4\pi N_i R_i^2\bigr|
\;\le\; C\,\delta_i^{\frac{1}{64}},
\end{equation}
and a corresponding Hausdorff--type closeness of $E_i$ to $F_i$ (in the sense
stated in~\cite[Theorem~1.2]{JulinNiinikoski23}).
\end{itemize}
The Hausdorff--type closeness implies the volumes of $E_i$ and $F_i$ are
asymptotically the same, hence
\[
|F_i| \;=\; N_i\,\frac{4\pi}{3}\,R_i^3 \;=\; |E_i| + o(1) \;=\; \frac{4\pi}{3}+o(1).
\]
In particular, after passing to a subsequence we may assume $N_i\equiv N$ is
constant and $R_i\to R$, so that $N R^3=1$ and therefore $R=N^{-1/3}$.  Taking
limits in \eqref{eq:JN-perimeter-approx} gives
\[
\lim_{i\to\infty}\mu_{g_i}(\Sigma_i)=4\pi N R^2 = 4\pi N^{1/3}.
\]
Combining with the uniform area bound in \eqref{eq:contradiction-setup} yields
\[
4\pi N^{1/3}\;\le\;4\pi\,\sqrt[3]{2}-\beta,
\]
hence $N<2$ and therefore $N=1$.  Consequently $R=1$ and
$\lambda_i=\overline H_i=2/R_i\to 2$.

With $N=1$, \eqref{eq:JN-perimeter-approx} becomes
\[
\bigl|\mu_{g_i}(\Sigma_i)-4\pi R_i^2\bigr|\;\le\;C\,\delta_i^{\frac{1}{64}},
\qquad R_i=\frac{2}{\overline H_i}.
\]
Equivalently,
\begin{equation}\label{eq:Hbar2-area}
\overline H_i^{\,2}\,\mu_{g_i}(\Sigma_i) \;=\; 16\pi + o(1).
\end{equation}
On the other hand, since $\int_{\Sigma_i}(H_i-\overline H_i)\,d\mu_{g_i}=0$, we have
the exact identity
\begin{equation}\label{eq:H2-splitting}
\int_{\Sigma_i} |H_i|^2\,d\mu_{g_i}
\;=\;
\int_{\Sigma_i}|H_i-\overline H_i|^2\,d\mu_{g_i}
\;+\;
\overline H_i^{\,2}\,\mu_{g_i}(\Sigma_i).
\end{equation}
Using \eqref{eq:contradiction-setup} and \eqref{eq:Hbar2-area} in
\eqref{eq:H2-splitting} we obtain
\[
\int_{\Sigma_i} |H_i|^2\,d\mu_{g_i} \;=\; o(1) + 16\pi \;\longrightarrow\;16\pi,
\]
which contradicts \eqref{eq:contradiction-energy} since
$32\pi(1-\alpha)>16\pi$ for $\alpha\in(0,\tfrac12)$.  This contradiction proves
\eqref{eq:willmore-two-sphere-threshold-conclusion}.
\end{proof}

\medskip

\noindent
As explained in the introduction, Proposition~\ref{prop:area-implies-willmore}
implies the area formulation of Theorem~\ref{thm:main-area}
by combining it with Theorem~\ref{thm:main}.

\begin{bibdiv}

\begin{biblist}

\bib{BiZhou22}{article}{
title={Bi-Lipschitz rigidity for $L^2$-almost CMC surfaces}
author={Bi, Yuchen}
author={Zhou, Jie}
journal={arXiv:2212.02946}
year={2022}
}

  \bib{BiZhou25b}{article}{
title={Quantitative stability of the Clifford torus as a Willmore minimizer}
author={Bi, Yuchen}
author={Zhou, Jie}
journal={arXiv:2511.19681}
year={2025}
}

\bib{CiMa}{article}{
 author={Ciraolo, Giulio},
   author={Maggi, Francesco},
   title={On the shape of compact hypersurfaces with almost-constant mean curvature},
   journal={Comm. Pure Appl. Math.},
   volume={ 70},
   date={2017},
   number={4},
   pages={665-716},
    issn={},
   review={MR3628882},
   doi={},
   }

   \bib{CiVe}{article}{
 author={Ciraolo, Giulio},
   author={Vezzoni, Luigi},
   title={A sharp quantitative version of Alexandrov's theorem via the method of moving planes},
   journal={ J. Eur. Math. Soc.},
   volume={ 20},
   date={2018},
   number={2},
   pages={261-299},
    issn={},
   review={MR3760295},
   doi={},
   }

 \bib{DeMaMiNe}{article}{
 author={Delgadino, Matias G.},
   author={Maggi, Francesco},
   author={Mihaila, Cornelia},
   author={Neumayer, Robin},
   title={Bubbling with $L^2$-almost constant mean curvature and an Alexandrov-type theorem for crystals},
   journal={ Arch. Ration. Mech. Anal.},
   volume={230},
   date={2018},
   number={3},
   pages={1131-1177},
    issn={},
   review={MR3851057},
   doi={},
   }
\bib{dLMu}{article}{
 author={De Lellis, Camillo},
   author={Müller, Stefan},
   title={Optimal rigidity estimates for nearly umbilical surfaces},
   journal={ J. Differential Geom.},
   volume={ 69},
   date={2005},
   number={1},
   pages={75-110},
    issn={},
   review={MR2169583 (2006e:53078)},
   doi={},
   }
\bib{dLMu2}{article}{
 author={De Lellis, Camillo},
   author={Müller, Stefan},
   title={ A $C^0$ estimate for nearly umbilical surfaces},
   journal={ Calc. Var. Partial Differential Equations},
   volume={ 26},
   date={2006},
   number={3},
   pages={283-296},
    issn={},
   review={MR2232206 (2007d:53003)},
   doi={},
   }
 \bib{FiZhang}{article}{
title={Strong stability of convexity with respect to the perimeter}
author={Figalli, Alessio}
author={Zhang, Yi Ru-Ya}
journal={ arXiv:2307.01633v2}
year={2023}
}
  \bib{JulinMoriniOronzioSpadaro25}{article}{
 author={Julin, Vesa},
 author={Morini, Massimiliano},
   author={Oronzio, Francesca},
   author={Spadaro, Emanuele}
   title={A Sharp Quantitative Alexandrov Inequality and Applications to Volume Preserving Geometric Flows in 3D},
   journal={Arch. Ration. Mech. Anal.},
   volume={249},
   date={2025},
   number={6},
   pages={},
    issn={},
   review={},
   doi={},
   }
    \bib{JulinNiinikoski23}{article}{
 author={Julin, Vesa},
 author={Niinikoski, Joonas},
   title={Quantitative Alexandrov theorem and asymptotic behavior of the volume preserving mean curvature flow},
   journal={Anal. PDE},
   volume={16},
   date={2023},
   number={3},
   pages={679-710},
    issn={},
   review={},
   doi={},
   }
 
     \bib{KM}{article}{
 author={Krummel, B},
 author={Maggi, F.},
   title={ Isoperimetry with upper mean curvature bounds and sharp stability estimates},
   journal={Calc. Var. Partial Differential Equations},
   volume={56 },
   date={2017},
   number={2},
   pages={43pp},
    issn={},
   review={},
   doi={},
   }

     \bib{LY}{article}{
 author={Li, Peter},
 author={Yau, Shing Tung},
   title={ A new conformal invariant and its applications to the Willmore conjecture and the first eigenvalue of compact surfaces},
   journal={Invent. Math.},
   volume={69 },
   date={1982},
   number={2},
   pages={269-291},
    issn={},
   review={},
   doi={},
   }

 \bib{MS95}{article}{
 author={M\"{u}ller, S.},
 author={\v{S}ver\'{a}k, V.}
   title={On surfaces of finite total curvature. },
   journal={J. Differential Geom.},
   volume={42},
   date={1995},
   number={2},
   pages={229-258},
    issn={},
   review={MR1366547 (97b:53007)},
   doi={},
   }

   \bib{RS25}{article}{
  author={Rupp, Fabian},
  author={Scharrer, Christian},
  title={Global regularity of integral 2-varifolds with square integrable mean curvature},
  journal={J. Math. Pures Appl.},
  volume={204},
  date={2025},
  pages={103797},
  issn={0021-7824},
  doi={10.1016/j.matpur.2025.103797},
}

 \bib{W69}{article}{
   author={Wente, H.},
   title={An existence theorem for surfaces of constant mean
curvature},
   journal={J. Math. Anal. Appl.},
   volume={26},
   date={1969},
   number={},
   pages={318-344},
   issn={},
   review={},
   doi={},
}
  \bib{W80}{article}{
   author={Wente, H.},
   title={Large solutions to the volume constrained Plateau problem},
   journal={Arch. Rat. Mech. Anal.},
   volume={75},
   date={1980},
   number={},
   pages={59-77},
   issn={},
   review={},
   doi={},
}
\end{biblist}

\end{bibdiv}

\end{document}